\numberwithin{equation}{section}
\theoremstyle{plain}
\newtheorem{thm}{\protect\theoremname}[section]
\theoremstyle{remark}
\newtheorem*{acknowledgement*}{\protect\acknowledgementname}
\theoremstyle{definition}
\newtheorem{defn}[thm]{\protect\definitionname}
\theoremstyle{plain}
\newtheorem{lem}[thm]{\protect\lemmaname}
\theoremstyle{definition}
\newtheorem{condition}[thm]{\protect\conditionname}
\theoremstyle{plain}
\newtheorem{prop}[thm]{\protect\propositionname}
\theoremstyle{remark}
\newtheorem{rem}[thm]{\protect\remarkname}
\theoremstyle{remark}
\newtheorem{notation}[thm]{\protect\notationname}
\theoremstyle{plain}
\newtheorem{cor}[thm]{\protect\corollaryname}
\theoremstyle{plain}
\providecommand{\acknowledgementname}{Acknowledgement}
\providecommand{\conditionname}{Condition}
\providecommand{\corollaryname}{Corollary}
\providecommand{\definitionname}{Definition}
\providecommand{\lemmaname}{Lemma}
\providecommand{\notationname}{Notation}
\providecommand{\propositionname}{Proposition}
\providecommand{\remarkname}{Remark}
\providecommand{\theoremname}{Theorem}
\begin{document}
\selectlanguage{american}%
\global\long\def\RR{\mathbb{R}}%
\global\long\def\CC{\mathbb{C}}%
\global\long\def\HH{\mathbb{H}}%
\global\long\def\NN{\mathbb{N}}%
\global\long\def\ZZ{\mathbb{Z}}%
\global\long\def\QQ{\mathbb{Q}}%

\global\long\def\l{\ell}%

\global\long\def\z{\zeta}%
\global\long\def\e{\epsilon}%
\global\long\def\a{\alpha}%
\global\long\def\b{\beta}%
\global\long\def\ga{\gamma}%
\global\long\def\ph{\varphi}%
\global\long\def\om{\omega}%
\global\long\def\lm{\lambda}%
\global\long\def\dl{\delta}%
\global\long\def\t{\theta}%
\global\long\def\s{\sigma}%
\foreignlanguage{english}{}
\global\long\def\little{\varepsilon}%

\selectlanguage{english}%

\global\long\def\exd#1#2{\underset{#2}{\underbrace{#1}}}%
\global\long\def\exup#1#2{\overset{#2}{\overbrace{#1}}}%

\global\long\def\leb#1{\operatorname{Leb(#1)}}%

\global\long\def\haar{\operatorname{Haar}}%

\global\long\def\norm#1{\left\Vert #1\right\Vert }%

\global\long\def\Onevec{\underline{1}}%
\global\long\def\One{\mathbf{1}}%

\global\long\def\porsmall{\prec}%
\global\long\def\porbig{\succ}%

\global\long\def\diffeo{\simeq}%

\global\long\def\lra{\longrightarrow}%
\global\long\def\del{\partial}%
\global\long\def\dprime{\prime\prime}%

\global\long\def\manifold{\mathcal{M}}%
\global\long\def\base{\mathbf{B}}%

\global\long\def\transpose{\mbox{t}}%

\global\long\def\norm#1{\left\Vert #1\right\Vert }%
\global\long\def\lilnorm#1{\Vert#1\Vert}%

\global\long\def\brac#1{(#1)}%
\global\long\def\vbrac#1{|#1|}%
\global\long\def\sbrac#1{[#1]}%
\global\long\def\dbrac#1{\langle#1\rangle}%
\global\long\def\cbrac#1{\{#1\}}%

\global\long\def\liev{\mathfrak{h}}%

\global\long\def\gl#1{\operatorname{GL}_{#1}}%

\global\long\def\sl#1{\operatorname{SL}_{#1}}%

\global\long\def\so#1{\operatorname{SO}_{#1}}%

\global\long\def\ort#1{\operatorname{O}_{#1}}%

\global\long\def\pgl#1{\operatorname{PGL}_{#1}}%

\global\long\def\po#1{\operatorname{PO}_{#1}}%

\global\long\def\Lat{\Gamma}%
\global\long\def\disgrp{\Gamma}%
\global\long\def\wc{Q}%

\global\long\def\prim{\operatorname{prim}}%

\global\long\def\conv{\operatorname{conv}}%

\global\long\def\interior#1{\operatorname{int}\left(#1\right)}%

\global\long\def\dist#1{\operatorname{dist}\brac{#1}}%

\global\long\def\diag#1{\operatorname{diag}(#1)}%

\global\long\def\rank#1{\operatorname{rank}\brac{#1}}%

\global\long\def\lie#1{\operatorname{Lie}\brac{#1}}%

\global\long\def\vol{\mbox{vol}}%

\global\long\def\covol#1{\operatorname{covol}\brac{#1}}%

\global\long\def\sym{\mbox{Sym}}%

\global\long\def\sp#1#2{\mbox{span}_{#1}(#2)}%

\global\long\def\id{\operatorname{id}}%

\global\long\def\idmat#1{\operatorname{I}_{#1}}%

\global\long\def\comp#1{\operatorname{#1}^{c}}%

\global\long\def\trunc#1#2{#1^{#2}}%

\global\long\def\leng{\ell}%

\global\long\def\nbhd#1#2{\mathcal{O}_{#1}^{#2}}%

\global\long\def\symfund#1{F_{#1}}%
\global\long\def\groupfund#1{\widetilde{F_{#1}}}%

\global\long\def\symfundgen#1{\mathbf{F}_{#1}}%
\global\long\def\groupfundgen#1{\widetilde{\mathbf{F}_{#1}}}%

\global\long\def\symfundrec#1#2{F_{#1}^{\left(#2\right)}}%
\global\long\def\groupfundrec#1#2{\widetilde{F_{#1}}^{\left(#2\right)}}%

\global\long\def\Gset{\mathcal{B}}%
\global\long\def\latticeset{\mathcal{S}}%
\global\long\def\pairset{\Xi}%
\global\long\def\symset{\mathcal{E}}%
\global\long\def\groupset{\widetilde{\symset}}%
\global\long\def\sphereset{\Phi}%

\global\long\def\funddom{\Omega}%

\global\long\def\parby#1#2{#1_{#2}}%

\global\long\def\bypar#1#2{_{#2}#1}%

\global\long\def\byparby#1#2{_{#2}#1_{#2}}%

\global\long\def\ball#1{B_{#1}}%

\global\long\def\GIcomp{S}%
\global\long\def\cube{\square}%

\global\long\def\sphere#1{\mathbb{S}^{#1}}%

\global\long\def\gras#1{\operatorname{Gr}(#1)}%

\global\long\def\grass#1{\operatorname{Gr}^{0}(#1)}%

\global\long\def\latspace#1{\mathcal{L}_{#1}}%

\global\long\def\unilatspace#1{\mathcal{U}_{#1}}%

\global\long\def\shapespace#1{\mathcal{X}_{#1}}%

\global\long\def\flagspace#1{\mathcal{P}_{#1}}%

\global\long\def\flag{\mathbf{F}}%

\global\long\def\svec{\underline{s}}%
\global\long\def\Svec{\underline{S}}%

\global\long\def\sumS{\mathbf{S}}%
\global\long\def\sums{\mathbf{s}}%

\global\long\def\wvec{\underline{w}}%
\global\long\def\Wvec{\underline{W}}%

\global\long\def\sumW{\mathbf{W}}%
\global\long\def\sumw{\mathbf{w}}%

\global\long\def\roundo{r}%

\global\long\def\errexp{\tau}%

\global\long\def\ac{\operatorname{ac}}%

\global\long\def\mass#1{\Vert#1\Vert}%

\global\long\def\sgn#1{\iota(#1)}%
\global\long\def\ind{h}%

\global\long\def\lat{\Lambda}%
\global\long\def\latfull{\Delta}%
\global\long\def\qlat{L}%
\global\long\def\Flat#1#2{#1^{(#2)}}%

\global\long\def\dvec{\underline{d}}%

\global\long\def\cov{X}%

\global\long\def\shape#1{\operatorname{shape}\left(#1\right)}%

\global\long\def\pair#1{\operatorname{pair}\left(#1\right)}%

\global\long\def\unilat#1{\left[#1\right]}%

\global\long\def\unisimlat#1{\left\llbracket #1\right\rrbracket }%

\global\long\def\dir#1{\widehat{#1}}%

\global\long\def\perpen#1{#1^{\perp}}%
\global\long\def\perpeng#1#2{#1^{\perp,#2}}%

\global\long\def\factor#1{#1^{\pi}}%
 
\global\long\def\factorg#1#2{#1^{\pi,#2}}%

\global\long\def\dual#1{#1^{*}}%

\global\long\def\latlast#1#2{#1^{\underleftarrow{#2}}}%

\global\long\def\based{\mathbf{D}}%
\global\long\def\basec{\mathbf{C}}%

\global\long\def\ii{\hat{i}}%
\global\long\def\jj{i}%
\global\long\def\dd{r}%

\global\long\def\iip{\hat{y}}%
\global\long\def\jjp{y}%

\title{Counting flags of primitive lattices}
\author{Tal Horesh\thanks{IST Austria, \texttt{tal.horesh@ist.ac.at}.} \and
Yakov Karasik\thanks{Technion, Israel, \texttt{theyakov@gmail.com}.}}
\maketitle
\begin{abstract}
We count flags of primitive lattices, which are objects of the form
$\left\{ 0\right\} =\Flat{\lat}0<\Flat{\lat}1<\cdots<\Flat{\lat}{\leng}=\ZZ^{n}$,
where every $\Flat{\lat}i$ is a primitive lattice in $\ZZ^{n}$.
The counting is with respect to two different natural height functions,
allowing us to give a new proof of the Manin conjecture for flag varieties
over the rational numbers. We deduce the equidistribution of rational
points in flag varieties, as well as the equidistribution of the shapes
of the successive quotient lattices $\Flat{\lat}i/\Flat{\lat}{i-1}$.
In doing so, we generalize previous work of Schmidt, as well as our
own, on counting primitive lattices of rank $d<n$.
\end{abstract}
\setcounter{tocdepth}{1}\tableofcontents{}

\section{Introduction\label{sec: Introduction}}

Let $n>1$ and let $\underline{d}=\brac{d_{1},\ldots d_{\leng}}$
be a partition of $n$, namely an $\leng$-tuple of (strictly) positive
integers such that $d_{1}+\cdots+d_{\leng}=n$. Consider a flag of
subspaces of $\RR^{n}$,
\begin{equation}
\flag=\:(\,\left\{ 0\right\} =V_{0}<V_{1}<\cdots<V_{\leng}=\RR^{n}\,),\label{eq: flag}
\end{equation}
with $\dim(V_{i})=d_{1}+\cdots+d_{i}$ for all $1\leq1\leq n$. 
If all the subspaces $V_{i}$ are rational (that is, have a basis
consisting of rational vectors), then each contains a unique primitive
lattice of rank $\dim(V_{i})$,
\[
\Flat{\lat}i=V_{i}\cap\ZZ^{n},
\]
and we obtain a flag of primitive lattices
\begin{equation}
\flag\brac{\ZZ}=\;(\,\left\{ 0\right\} =\Flat{\lat}0<\Flat{\lat}1<\cdots<\Flat{\lat}{\leng}=\ZZ^{n}\,).\label{eq: lat flag}
\end{equation}
The aim of this paper is to extend known counting and equidistribution
results from primitive lattices to flags of primitive lattices. Schmidt
\cite[Thm.~1]{Schmidt_68} was the first to prove a counting result
for primitive lattices of rank $1\leq d\leq n$ in $\RR^{n}$, showing
that the number of primitive lattices of rank $d$ with covolume up
to $X$ is 
\begin{equation}
c_{d,n}X^{n}+O\brac{X^{n-\max\cbrac{\frac{1}{d},\frac{1}{n-d}}}},\label{eq: Schmidt}
\end{equation}
where the covolume of a lattice is the volume of a fundamental parallelepiped
for the lattice in the linear space it spans and 
\begin{equation}
c_{d,n}=\frac{1}{n}{n \choose d}\frac{\zeta(2)\cdots\zeta(d)}{\zeta(n-d+1)\cdots\zeta(n)}\frac{\mathfrak{V}(n-d+1)\cdots\mathfrak{V}(n)}{\mathfrak{V}(1)\cdots\mathfrak{V}(d)};\label{eq: constant Schmidt}
\end{equation}
here $\zeta$ is the Riemann Zeta function and $\mathfrak{V}(i)$
is the Lebesgue volume of the a ball in $\RR^{i}$. (This result was
generalized to general number fields by Thunder \cite[Thm.~1]{Thunder92},
who also proved a counting result \cite[Thm.~3]{Thunder93} for primitive
$d$-lattices that do not intersect a certain $\brac{n-d}$-dimensional
subspace. The error term in (\ref{eq: Schmidt}) was distilled by
Kim \cite[Thm.~1.3]{Kim19}). Later, Schmidt \cite[Thm.~2]{Schmidt_98}
refined (\ref{eq: Schmidt}) so that it also takes into account the
\emph{shape} of the lattices, where the shape of a lattice in $\RR^{n}$
is its equivalence class modulo rotation in $\RR^{n}$ and rescaling
by a positive scalar. The space of shapes of rank $d$ lattices is
denoted by $\shapespace d$ (to be defined explicitly in Section \ref{sec: Flags});
it is not compact, but it admits a natural uniform probability measure,
$\vol_{\shapespace d}^{1}$. Schmidt showed that given a Jordan measurable
subset $\symset\subset\shapespace d$, the number of primitive lattices
with covolume up to $X$ and shape inside $\symset$ is\setlength{\abovedisplayskip}{3pt} 
\setlength{\belowdisplayskip}{6pt}
\[
\sim c_{d,n}\cdot\vol_{\shapespace d}^{1}\brac{\symset}X^{n}.
\]
Since the subsets $\symset$ are general enough, this counting can
be read as an equidistribution statement, namely that the shapes of
primitive lattices equidistribute in $\shapespace d$ as their covolume
tends to infinity. 

Dynamical techniques opened the door to equidistribution theorems
that do not follow from (nor imply) counting statements, but with
the advantage of considering lattices of covolume exactly $X$ (as
apposed to at most $X$). Primarily, the focus was on the case $d=n-1$,
namely on primitive lattices that lie in hyperplanes defined by being
orthogonal to primitive vectors. Such equidistribution results were
established by Aka, Einsiedler and Shapira \cite{AES_16B,AES_16A},
Einsiedler, Mozes, Shah and Shapira \cite{EMSS_16}, and (with a bound
on the rate of convergence) by Einsiedler, R�hr and Wirth \cite{ERW17}.
In fact, these equidistribution results were joint for the shapes
of the primitive lattices in $\shapespace{n-1}$, and the projections
of the primitive vectors orthogonal to these lattices to the unit
sphere in $\RR^{n}$ \textemdash{} in other words, for shapes of primitive
$\brac{n-1}$-lattices, and their \emph{directions}. For general $d$,
the direction of a $d$-lattice $\lat$ is the real linear space that
it spans,\setlength{\abovedisplayskip}{0pt} 
\setlength{\belowdisplayskip}{6pt}
\[
V_{\lat}=\sp{\RR}{\lat},
\]
lying in the Grassmannian $\grass{d,n}$ of $d$-dimesional subspaces
of $\RR^{n}$. In \cite[Thm.~1.2]{Schmidt_15}, Schmidt showed that
for quite restricted types of sets $\symset\subset\shapespace d$
and $\sphereset\subseteq\grass{d,n}$, the number of primitive $d$-lattices
with shapes in $\symset$ and directions in $\sphereset$ is \setlength{\abovedisplayskip}{6pt} 
\setlength{\belowdisplayskip}{6pt}
\[
c_{d,n}\cdot\vol_{\shapespace d}^{1}\brac{\symset}\vol_{\grass{d,n}}^{1}\brac{\sphereset}X^{n}+O\brac{X^{n-\frac{1}{d}}\cdot\log^{d-1}X},
\]
where $\vol_{\grass{d,n}}^{1}$ is the uniform probability measure
on $\grass{d,n}$. In \cite{HK_dlattices}, we were able to extend
the above result to subsets $\symset$ and $\sphereset$ that were
general enough to conclude equidistribution, as well as to consider
the orthogonal lattices \setlength{\abovedisplayskip}{6pt} 
\setlength{\belowdisplayskip}{6pt}
\[
\perpen{\lat}=\ZZ^{n}\cap\perpen{V_{\lat}}
\]
to primitive lattices $\lat$, where $\perpen{V_{\lat}}$ is the orthogonal
complement of $V_{\lat}$ in $\RR^{n}$. The consideration of the
orthogonal lattices proved to be crucial in an application to the
study of rational points on Grassmannians, described below. Finally,
Aka, Musso and Wieser \cite{AMW21} have extended the aforementioned
equidistribution results on shapes of primitive lattices with covolume
$X$, from rank $n-1$ to a general rank. 

Our goal in the present paper is to generalize the counting result
in \cite{HK_dlattices} from primitive lattices to flags of such.
To this end, for $\underline{d}=\brac{d_{1},\ldots d_{\leng}}$ as
above, we let 
\begin{equation}
\grass{\dvec,n}=\text{space of \ensuremath{\dvec}-flags in \ensuremath{\RR^{n}}}\label{eq: flag grassmannian (non-ortd)}
\end{equation}
(a $\dvec$-flag is the object defined in (\ref{eq: flag})), and\setlength{\abovedisplayskip}{3pt} 
\setlength{\belowdisplayskip}{3pt}
\begin{equation}
\shapespace{\dvec}=\prod_{i=1}^{\leng}\shapespace{d_{i}}\,.\label{eq: flag shape space}
\end{equation}
Notice that one cannot expect equidistribution of the projections
of $\Flat{\lat}i$ to $\shapespace{d_{1}+\cdots+d_{i}}$ or to $\grass{d_{i},n}$
jointly for all $i=1,\ldots,n$, since the relation of inclusion between
the $\Flat{\lat}i$'s implies dependence. Instead, we consider the
successive quotients:
\[
L_{1}=\Flat{\lat}1/\Flat{\lat}0\,,\ldots,\,L_{\leng}=\Flat{\lat}{\leng}/\Flat{\lat}{\leng-1},
\]
where we note that $\rank{L_{i}}=d_{i}$ for all $1\leq i\leq\leng$.
Let 
\[
\shape{\flag\brac{\ZZ}}=\left(\shape{L_{1}},\ldots,\shape{L_{\leng}}\right)\in\shapespace{\dvec}\,.
\]
(As explained in Section \ref{sec: Flags}, the quotients $\qlat_{i}$
are isometric to concrete lattices in $\RR^{n}$, so their shapes
are well defined). Our first counting result is with respect to the
height function 
\[
H_{\infty}(\flag\brac{\ZZ})=\max\cbrac{\covol{\Flat{\lat}1},\ldots,\covol{\Flat{\lat}{\leng}}}.
\]
A subset of an orbifold is called \emph{boundary controllable} \cite[Def.~1.2]{HK_WellRoundedness}
if its boundary satisfies a standard regularity condition (Definition
\ref{def: BCS}).
\begin{thm}
\label{thm: Counitng with supremum}Let $\symset\subseteq\shapespace{\dvec}$
and $\sphereset\subseteq\grass{\dvec,n}$ be boundary controllable.
The number of primitive lattice $\dvec$-flags $\flag\brac{\ZZ}$
with $H_{\infty}(\flag\brac{\ZZ})\leq X$, $\flag\in\sphereset$ and
$\shape{\flag\brac{\ZZ}}\in\symset$ is 
\[
c_{\dvec,n}\cdot\vol_{\shapespace{\dvec}}^{1}\brac{\symset}\vol_{\grass{\dvec,n}}^{1}\brac{\sphereset}\cdot X^{2n-d_{1}-d_{\leng}}+O_{\e}\brac{X^{\brac{2n-d_{1}-d_{\leng}}\left(1-\frac{1}{16n^{2}}+\e\right)}}
\]
for all $\e>0$, where 
\begin{equation}
c_{\dvec,n}=\frac{1}{2^{\leng-1}}\frac{1}{\prod_{i=1}^{\leng-1}(d_{i}+d_{i+1})}{n \choose d_{1},\ldots,d_{\leng}}\frac{\prod_{i=1}^{\leng-1}\prod_{j=2}^{d_{i}}\zeta\brac j}{\prod_{i=2}^{d_{\leng}}\zeta\left(i\right)}\frac{\prod_{i=d_{\leng}+1}^{n}\mathfrak{V}(i)}{\prod_{i=1}^{\leng-1}\prod_{j=1}^{d_{i}}\mathfrak{V}(j)}.\label{eq: constant flags}
\end{equation}
\end{thm}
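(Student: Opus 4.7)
The plan is to prove the theorem by induction on the flag length $\leng$. The base case $\leng=2$ is the main counting result of our prior work \cite{HK_dlattices} on single primitive lattices in $\ZZ^n$ jointly with their orthogonal complements; the exponent $2n-d_1-d_\leng$ reduces to $n$, matching Schmidt's classical count, and that result already supplies both the leading constant and a power-saving error of the required form $X^{n(1-1/(16n^2)+\e)}$.

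For the inductive step I would peel off the innermost lattice. A $\dvec$-flag decomposes canonically into (a) the primitive lattice $\Flat{\lat}1$ of rank $d_1$ (contributing shape in $\shapespace{d_1}$ and direction $V_1\in\grass{d_1,n}$), together with (b) the residual chain $\Flat{\lat}1<\Flat{\lat}2<\cdots<\Flat{\lat}\leng=\ZZ^n$, which modulo $\Flat{\lat}1$ becomes a primitive $(d_2,\ldots,d_\leng)$-flag in the rank-$(n-d_1)$ quotient lattice $\ZZ^n/\Flat{\lat}1$ (realized, via orthogonal projection onto $\perpen{V_1}$, as a lattice of covolume $1/\covol{\Flat{\lat}1}$). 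By $\ort n(\RR)$-invariance of the uniform measures, $\grass{\dvec,n}$ is a fibre bundle over $\grass{d_1,n}$ with fibre $\grass{(d_2,\ldots,d_\leng),n-d_1}$ carrying the product uniform measure, and $\shapespace{\dvec}$ splits analogously as a product. After standard Jordan approximation this reduces to product sets $\sphereset=\sphereset_1\times\sphereset'$ and $\symset=\symset_1\times\symset'$. The height constraint factors into $\covol{\Flat{\lat}1}\le X$ together with the inner bounds $\covol{\Flat{\lat}i/\Flat{\lat}1}\le X/\covol{\Flat{\lat}1}$ for $i=2,\ldots,\leng$.

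To drive the induction, I would strengthen the statement to count $(d_2,\ldots,d_\leng)$-flags in an arbitrary ambient unimodular lattice with \emph{individual} covolume bounds $\covol{M_i}\le X_i$, uniformly in the ambient lattice (in particular, in its shape). After rescaling $\ZZ^n/\Flat{\lat}1$ to unimodular form, the single bound $X/\covol{\Flat{\lat}1}$ translates into exactly such non-uniform bounds $X_i=X\cdot Y^{-(d_{i+1}+\cdots+d_\leng)/(n-d_1)}$, parameterized by $Y=\covol{\Flat{\lat}1}$. The strengthened hypothesis supplies the inner count as an asymptotic in $X$ and $Y$; combining this with Stieltjes integration against the outer density $dN_{d_1}(Y)\sim nc_{(d_1,n-d_1),n}Y^{n-1}dY$ (from the base case) and evaluating the resulting Beta-type integral on $[1,X]$ produces the leading term $X^{2n-d_1-d_\leng}$. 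The factors $1/(d_i+d_{i+1})$ and $1/2^{\leng-1}$ in (\ref{eq: constant flags}) arise from these iterated integrations, while the multinomial, $\zeta$, and $\mathfrak V$ products accumulate multiplicatively from the base-case constants at each stage.

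The main obstacle lies in establishing the strengthened inductive hypothesis. First, extending the counting from $\ZZ^n$ to an arbitrary unimodular ambient lattice with constants uniform in its shape requires recertifying the arguments of \cite{HK_dlattices} under a varying ambient geometry. Second, the multivariable bounds $\covol{M_i}\le X_i$ cut out a non-convex region in the moduli of flags, so the counting must be carried out by a dissection into pieces on which a single $M_i$ is near its boundary. Third, and most delicately, the polynomial power-saving $1/(16n^{2})$ in the error term must be preserved through $\leng-2$ iterated integrations; this hinges on the base-case error being genuinely uniform in the ambient lattice, and uses the boundary-controllability of $\symset$ and $\sphereset$ to supply the regularity needed for the Riemann-sum approximations at each inductive step without the error bloating.
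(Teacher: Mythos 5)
Your inductive peeling strategy is a genuinely different route from the paper, which does not induct on $\leng$ at all: the paper works directly on $G=\sl n(\RR)$, encodes the entire flag at once via a refined Iwasawa decomposition $G=\wc A^{\prime}N^{\prime}$, reduces the count to lattice points of $\sl n(\ZZ)$ in explicit subsets $\byparby{\funddom}T\brac{\pairset}\subset G$, computes their Haar volume, and counts $\sl n(\ZZ)$-points via the Gorodnik--Nevo theorem on the compactly truncated part and by a direct reduced-basis estimate on the cusp. That said, your sketch has a genuine gap that goes beyond what you flag as an ``obstacle'': the base-case result of \cite{HK_dlattices} gives error terms only for the fixed ambient lattice $\ZZ^{n}$, and your inductive step would feed in the projected lattices $\ZZ^{n}/\Flat{\lat}1$, whose shapes range over all of $\shapespace{n-d_{1}}$. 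There is no version of the inner count that is uniform over the ambient lattice: when the ambient lattice's shape drifts into the cusp of $\shapespace{n-d_{1}}$, the implied constant in the error term of any Schmidt/GN-type count necessarily degrades (and on a positive-density set in $Y$, since the shapes of factor lattices only equidistribute). So the Stieltjes integration against $dN_{d_{1}}(Y)$ cannot simply be carried out with a pointwise error bound; you would need to split the outer sum into the part where $\ZZ^{n}/\Flat{\lat}1$ has bounded shape (where uniformity can be recertified) and a cuspidal tail, and then control the tail directly. That dichotomy is the actual technical heart of the problem, and it is exactly what the paper supplies in Sections~\ref{sec: counting with GN}--\ref{sec: Counting the cusp} via the truncation $\trunc{\byparby{\funddom}T}{\Svec}$ and the reduced-basis counting in Lemma~\ref{lem: Butz's Lemma}.

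Two further points. First, your ``strengthened inductive hypothesis'' with individual bounds $\covol{M_{i}}\leq X_{i}$ is not quite what peels off for $H_{\infty}$: after rescaling $\ZZ^{n}/\Flat{\lat}1$ to unimodular form, the constraint on $\covol{\Flat{\lat}i}$ becomes $\covol{M_{i}}\leq X\cdot Y^{-1+(D_{i}-d_{1})/(n-d_{1})}$, a family of bounds depending on $Y$ but all of sup type; you still need to reprove the entire machinery of \cite{HK_dlattices} for a parameter family of sup-type regions and varying ambient lattices, which is roughly as much work as doing the general $\leng$ case directly. Second, the deduction of the probability factors $\vol_{\shapespace{\dvec}}^{1}\brac{\symset}\vol_{\grass{\dvec,n}}^{1}\brac{\sphereset}$ from the product structure of $\grass{\dvec,n}\to\grass{d_{1},n}$ and $\shapespace{\dvec}$ needs a Fubini argument over boundary-controllable sets; your ``standard Jordan approximation'' step would need to verify that boundary controllability passes to the fibre slices, analogous to what the paper establishes in Proposition~\ref{lem: lift to G''} but now iterated $\leng-1$ times with error accumulation. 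None of these issues is fatal in principle, but they are unresolved, and resolving them would in effect reproduce the paper's bulk-plus-cusp analysis at each stage of the induction rather than once globally.
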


Notice that when $\leng=2$, the lattice flag $\flag(\ZZ)$ is in
fact a single primitive lattice $\lat<\ZZ^{n}$, and then the constant
$c_{\dvec,n}$ coincides with Schmidt's constant (\ref{eq: constant Schmidt}).
Indeed, returning to Schmidt's result on primitive lattices, the one
to one correspondence between primitive $d$-lattices in $\ZZ^{n}$
and rational $d$-dimensional subspaces of $\RR^{n}$ ($V\mapsto V\cap\ZZ^{n}$
and $V_{\lat}\ensuremath{\mapsfrom}\lat$) means that the primitive
$d$-lattices are in fact the rational points on the projective variety
$\grass{d,n}$. Since the anticanonical height function on this variety
is
\[
H_{\ac}(V_{\lat})=\covol{\lat}^{n},
\]
then (\ref{eq: Schmidt}) can be read as one on counting rational
points on the Grassmannian w.r.t.\ the anticanonical height function,
and in particular it confirms Manin's Conjecture \cite{FMT89,P95}
for this variety. The anticanonical height function on the flag variety
$\grass{\dvec,n}$ (whose elements are $\dvec$-flags of the form
(\ref{eq: flag}), and whose rational points are primitive lattice
$\dvec$-flags as in (\ref{eq: lat flag})) is 

\[
H_{\ac}(\flag\brac{\ZZ})=\prod_{i=0}^{\leng-1}\covol{\Flat{\lat}i}^{d_{i}+d_{i+1}}
\]
\cite{Papa83,Thunder93}, and it is known by the work of Franke, Manin
and Tschinkel \cite[Cor.~from Thm.~5]{FMT89} that flag varieties
also satisfy Manin's conjecture (see also \cite[Thm.~5]{Thunder93}
and \cite[Cor.~1.3]{Kim19} for the special case of flags  in which
$\Flat{\lat}1$ intersects trivially a given subspace). However, just
as (\ref{eq: Schmidt}) can be refined to include the shapes and directions
of primitive lattices, so can the counting of primitive lattice flags.
Our second result is on counting primitive lattice flags with respect
to the height function $H_{\ac}$, and with consideration of shapes
and directions.
\begin{thm}
\label{thm: counting anti canonical}Let $\symset\subseteq\shapespace{\dvec}$
and $\sphereset\subseteq\grass{\dvec,n}$ be boundary controllable.
The number of primitive lattice $\dvec$-flags $\flag\brac{\ZZ}$
with $H_{\ac}(\flag\brac{\ZZ})\leq X$, $\flag\in\sphereset$ and
$\shape{\flag\brac{\ZZ}}\in\symset$ is 
\[
c_{\dvec,n}\cdot\vol_{\shapespace{\dvec}}^{1}\brac{\symset}\vol_{\grass{\dvec,n}}^{1}\brac{\sphereset}\cdot X\sum_{j=0}^{\leng-2}\frac{\brac{-1}^{\leng-2-j}}{j!}\brac{\log X}^{j}+O_{\e}\brac{X^{\left(1-\frac{1}{16n^{2}}+\e\right)}}
\]
for all $\e>0$, where $c_{\dvec,n}$ is as in (\ref{eq: constant flags}).
\end{thm}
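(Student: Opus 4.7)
The plan is to deduce Theorem \ref{thm: counting anti canonical} from Theorem \ref{thm: Counitng with supremum} by a Stieltjes-type integration over the ``hyperbolic'' region in covolume space cut out by the anticanonical height. Writing $T_{i}:=\covol{\Flat{\lat}i}$ for $i=1,\ldots,\leng-1$ (the factors with $i=0$ and $i=\leng$ in $H_{\ac}$ are trivial), the condition $H_{\ac}(\flag(\ZZ))\leq X$ becomes $\prod_{i=1}^{\leng-1}T_{i}^{d_{i}+d_{i+1}}\leq X$. Since $\sum_{i=1}^{\leng-1}(d_{i}+d_{i+1})=2n-d_{1}-d_{\leng}$, the leading constant of Theorem \ref{thm: Counitng with supremum} is consistent with a uniform lattice-point density in the coordinates $U_{i}:=T_{i}^{d_{i}+d_{i+1}}$; this is what will make the anticanonical count reduce to a volume calculation in $\underline{U}$-coordinates.

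The first step is to refine Theorem \ref{thm: Counitng with supremum} to a \emph{box count}: letting $N(\sphereset,\symset;\underline{T}):=\#\cbrac{\flag(\ZZ):\covol{\Flat{\lat}i}\leq T_{i}\ \forall i,\ \flag\in\sphereset,\ \shape(\flag(\ZZ))\in\symset}$, I would establish that
\[
N(\sphereset,\symset;\underline{T})=c_{\dvec,n}\cdot\vol_{\shapespace{\dvec}}^{1}(\symset)\vol_{\grass{\dvec,n}}^{1}(\sphereset)\prod_{i=1}^{\leng-1}T_{i}^{d_{i}+d_{i+1}}+O_{\e}\brac{\brac{\prod_{i=1}^{\leng-1}T_{i}^{d_{i}+d_{i+1}}}^{1-\frac{1}{16n^{2}}+\e}}
\]
uniformly for $\underline{T}\in[1,\infty)^{\leng-1}$. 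Setting $T_{1}=\cdots=T_{\leng-1}=X$ recovers Theorem \ref{thm: Counitng with supremum} exactly, so the constants are forced to match. Such a box count should follow by the very techniques used to prove Theorem \ref{thm: Counitng with supremum} (a box is no harder than a cube); if it is not immediate from the proof, it can be extracted by a $(\leng-1)$-fold dyadic decomposition of each $T_{i}$-range and a rescaled application of the cube case.

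With the box count in hand, I would express
\[
\#\cbrac{H_{\ac}(\flag(\ZZ))\leq X,\ \flag\in\sphereset,\ \shape\in\symset}=\int_{\prod T_{i}^{d_{i}+d_{i+1}}\leq X,\ T_{i}\geq 1}dN(\sphereset,\symset;\underline{T})
\]
and substitute the box asymptotic. Under the change of variable $U_{i}=T_{i}^{d_{i}+d_{i+1}}$, the main term's differential collapses to the flat measure $c_{\dvec,n}\vol_{\shapespace{\dvec}}^{1}(\symset)\vol_{\grass{\dvec,n}}^{1}(\sphereset)\,dU_{1}\cdots dU_{\leng-1}$, so the main contribution equals $c_{\dvec,n}\vol_{\shapespace{\dvec}}^{1}(\symset)\vol_{\grass{\dvec,n}}^{1}(\sphereset)$ times the Euclidean volume of $\cbrac{\underline{U}\in[1,\infty)^{\leng-1}:\prod U_{i}\leq X}$. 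Setting $V_{i}=\log U_{i}\geq 0$ and using the slice formula for the standard simplex yields
\[
\int_{0}^{\log X}\frac{e^{S}S^{\leng-2}}{(\leng-2)!}\,dS=X\sum_{j=0}^{\leng-2}\frac{(-1)^{\leng-2-j}}{j!}(\log X)^{j}+O(1),
\]
where the evaluation is by repeated integration by parts. This matches the claimed polynomial-times-$X$ main term in the statement.

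The hard part will be Step 1: the box-count error must be uniform in $\underline{T}$ with a power-saving exponent of $\frac{1}{16n^{2}}$ applied to the full product $\prod T_{i}^{d_{i}+d_{i+1}}$ (not to each $T_{i}$ individually, which would be stronger and generally false). After integrating over the anticanonical region (whose $\underline{U}$-volume is $\asymp X(\log X)^{\leng-2}$), the logarithmic losses from any dyadic decomposition of the $\underline{T}$-region (at most $O((\log X)^{\leng-1})$ pieces) are absorbed into the final $X^{\e}$. The lower boundary $T_{i}\geq 1$ is automatic since primitive $d$-sublattices of $\ZZ^{n}$ have covolume $\geq 1$ (being norms of primitive integer Plücker vectors), so no delicate small-covolume contribution needs to be isolated separately.
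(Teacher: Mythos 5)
Your proposed route is genuinely different from the paper's. You want to deduce the anticanonical count from a \emph{box count} $N(\sphereset,\symset;\underline{T})$ (a $(\leng-1)$-parameter refinement of Theorem~\ref{thm: Counitng with supremum}) by Stieltjes/hyperbola summation over the region $\prod T_i^{d_i+d_{i+1}}\leq X$. The paper instead works with the anticanonical region directly: in the $A^{\prime}$-coordinates of Section~\ref{sec: RI coordinates}, the constraint $\log H_{\ac}\leq T$ carves out the simplex $\bypar{A^{\prime}}T=\{\sum_i (d_i+d_{i+1})t_i\leq T,\ t_i\geq 0\}$; the paper computes $\mu_{A^{\prime}}(\bypar{A^{\prime}}T)$ via Lemma~\ref{lem: Florian measure}, proves in Proposition~\ref{thm: Counting with S and W} that the family $\{\bypar{A^{\prime}}T\}_T$ is Lipschitz well rounded with a constant \emph{uniform in $T$}, and then applies Gorodnik--Nevo directly to the simplex. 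Your volume evaluation in Step~3 is correct and is literally the same computation as Lemma~\ref{lem: Florian measure} ($f_{\leng-1}(\log X)$), so no disagreement there.

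The gap is in Step~1, and it is more serious than your remark about the lower boundary suggests. The box count you postulate is strictly stronger than Theorem~\ref{thm: Counitng with supremum} (which is the cube $T_1=\cdots=T_{\leng-1}$), and establishing it by the paper's Gorodnik--Nevo machinery runs into a uniformity obstruction: for the box $\prod_i [0,t_i]$ in $A^{\prime}$ (with $t_i=\log T_i$) the Lipschitz well-roundedness constant behaves like $\max_i\, t_i^{-1}$, so it blows up as some $T_i\to 1^+$. Since the Gorodnik--Nevo error carries the factor $C_{\Gset}^{\dim G/(1+\dim G)}$, the error in your box asymptotic is \emph{not} of the shape $O\brac{(\prod T_i^{d_i+d_{i+1}})^{1-\tau+\e}}$ uniformly on $[1,\infty)^{\leng-1}$; it degrades precisely near the facets $T_i=1$ of the anticanonical region. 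And you cannot simply excise those corners: in the $\underline{U}$-coordinates their volume is $\asymp X(\log X)^{\leng-3}$, i.e.\ they contribute to the \emph{lower-order coefficients} of the polynomial $X\sum_j \frac{(-1)^{\leng-2-j}}{j!}(\log X)^j$, not to the $O\brac{X^{1-\frac{1}{16n^2}+\e}}$ error term. A truncation $T_i\geq X^{\delta}$ (or even $T_i\geq e^{\e_0}$) therefore perturbs the main-term polynomial by an amount larger than the allowed error, so the lower-order terms would not be recovered. The observation that primitive integral lattices automatically have covolume $\geq 1$ is true but does not address this; the issue is the behaviour of the box count \emph{near} $T_i=1$, not below it. The paper's choice to prove well-roundedness for the simplex as a single geometric object is exactly what sidesteps this: the simplex family $\bypar{A^{\prime}}T$ \emph{is} uniformly Lipschitz well rounded (the explicit computation at the end of the proof of Proposition~\ref{thm: Counting with S and W} gives a constant independent of $T$), even though each ``inscribed'' box is not. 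To make your approach rigorous you would either have to prove the box count with an error that keeps track of the Lipschitz constant and then show that the accumulated corner losses still resolve to the exact polynomial, or handle the corner contributions by a separate (e.g.\ inductive, lower-rank) argument; neither is immediate.
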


The refinement of Franke, Manin and Tschinkel's result suggested in
Theorem \ref{thm: counting anti canonical} could prove useful in
further study of rational points on flag varieties: Browning, the
first author and Wilsch \cite{BHW_grassmannians} built on \cite{HK_dlattices}
to establish the \emph{freeness} variant of Manin's conjecture, proposed
by Peyre \cite{P17,P18}, for Grassmannians. We expect that Theorem
\ref{thm: counting anti canonical} could be used to extend the results
in \cite{BHW_grassmannians} from Grassmannians to more general flag
varieties.

\paragraph{Organization of the paper}

In Section \ref{sec: Flags}, we provide some background on lattices
and define a \emph{space of primitive unimodular flags}, which generalizes
the concept of the space of unimodular lattices $\sl n\brac{\RR}/\sl n\brac{\ZZ}$.
In Section \ref{sec: RI coordinates}, we define a refinement of the
Iwasawa coordinates on $\sl n(\RR)$ that is suiting for studying
this space, as well as the spaces $\shapespace{\dvec}$ and $\grass{\dvec,n}$.
The analysis of the $\shapespace{\dvec}$, $\grass{\dvec,n}$ and
the space of primitive unimodular flags is completed in Section \ref{sec: Spread Models},
including the measures $\vol$ whose normalizations $\vol^{1}$ to
probability measures appear in Theorems \ref{thm: Counitng with supremum}
and \ref{thm: counting anti canonical}. In Section \ref{sec: General Thm},
we state the more general Theorem \ref{thm: general thm} for counting
lattice flags $\flag\brac{\ZZ}$, this time with respect to their
projections to the space of primitive unimodular flags, and prove
Theorems \ref{thm: Counitng with supremum} and \ref{thm: counting anti canonical}
based on it. The rest of the paper is devoted to proving Theorem \ref{thm: general thm}.
In Section \ref{sec: Integral matrices representing primitive vectors},
we translate the problem of counting the flags $\flag\brac{\ZZ}$
into a problem of counting the points of the integral lattice $\sl n\brac{\ZZ}$
in carefully designed subsets of $\sl n(\RR)$, whose volumes are
computed in Section \ref{sec: volumes}. These subsets are not compact
\textendash{} we split them into compact subsets that contain ``most
of the mass'' (and most lattice points), which we handle in Section
\ref{sec: counting with GN}, and to their non-compact complements,
which we handle in Section \ref{sec: Counting the cusp}. 

\begin{acknowledgement*}
The initial idea for this work was born while both authors were visiting
IH�S (Institut des Hautes �tudes Scientifiques, France) during the
end of 2018. It was then developed into a paper while both authors
were at IST Austria at the end of 2020 and again at the end of 2021.
During these visits, the support of EPSRC grant EP/P026710/1 is gratefully
acknowledged. The authors want to express their deep gratitude to
Florian Wilsch for suggesting the idea of studying the anticanonical
height function and for many extremely helpful discussions. 
\end{acknowledgement*}

\section{\label{sec: Flags}From lattices to flags of lattices}

Let $V$ be a real vector space of dimension $n$. A \emph{$d$-lattice}
(or, a lattice of rank $d$) $\lat<V$ is the $\ZZ$-span of $1\leq d\leq n$
linearly independent elements in $V$. When $d=n$, we say that $\lat$
is a \emph{full} lattice. Recall that $V_{\lat}<V$ is the real vector
space spanned by $\lat$, and that the covolume of $\lat$, $\covol{\lat}$,
is the volume of a fundamental parallelepiped of $\lat$ in $V_{\lat}$.
Given a basis $\base$ for $\lat$, the covolume of $\lat$ is $\brac{|\det\brac{\base^{\transpose}\base}|}^{1/2}$.
For technical reasons, we will regard our lattices $\lat$, and accordingly
the linear spaces that they span $V_{\lat}$, as \emph{oriented} lattices
(resp.\ subspaces), meaning that they are equipped with a choice
of orientation. We say that a lattice $\lat$ is \emph{unimodular}
if it is positively oriented and has covolume one. The space of unimodular
$d$-lattices in $\RR^{d}$ is 
\[
\latspace d=\sl d\left(\RR\right)/\sl d\left(\ZZ\right),
\]
and the space of shapes of $d$-lattices is 
\[
\shapespace d=\so d\left(\RR\right)\backslash\sl d\left(\RR\right)/\sl d\left(\ZZ\right)
\]
(recall that the shape of $\lat$ is its equivalence class modulo
rotation and rescaling). Finally, we let
\[
\gras{d,n}=\text{set of oriented \ensuremath{d}-lattices in \ensuremath{\RR^{n}},}
\]
which is a double cover of $\grass{d,n}$. Just as the direction of
a lattice $\lat$ is  the real vector space $V_{\lat}$ that it spans,
the direction of an oriented lattice $\lat$ is  the real oriented
subspace that it spans; we keep the notation $V_{\lat}$. 

Clearly, $\ZZ^{n}$ (with a positive orientation) is a full unimodular
lattice in $\RR^{n}$. Given a lattice of smaller rank $\lat<\ZZ^{n}$,
it is standard to call $\lat$ primitive if $\lat=V_{\lat}\cap\ZZ^{n}$.
This notion naturally extends from $\ZZ^{n}$ to any other full lattice
$\latfull<\RR^{n}$ as follows. 
\begin{defn}
Assume that a $d$-lattice $\lat$ is contained inside a full lattice
$\latfull<\RR^{n}$. We say that $\lat$ is \emph{primitive} inside
$\latfull$ if $\lat=\latfull\cap V_{\lat}$. When $\lat$ is primitive
inside $\ZZ^{n}$, we omit the explicit mentioning of $\ZZ^{n}$,
and just say that $\lat$ is primitive.
\end{defn}

When $\lat$ is primitive in $\latfull$, the quotient $\latfull/\lat$
is a lattice; it is a full lattice in the vector space $V_{\latfull}/V_{\lat}$
and has covolume $\covol{\latfull}/\covol{\lat}$. If, moreover, $V_{\latfull}=\RR^{n}$
and $\lat$ is primitive in $\latfull$, then $\latfull/\lat$ can
be viewed as a lattice in $\RR^{n}$; this is because it is isometric
to the lattice inside $\perpen{V_{\lat}}$ which is obtained by projecting
$\latfull$ orthogonally to $\perpen{V_{\lat}}$. (Such projected
lattices are called \emph{factor lattices}; they are introduced in
\cite{Schmidt_68} and studied in \cite{HK_dlattices}). Moreover,
$\latfull/\lat$ can also be regarded as an oriented lattice, inheriting
the following orientation from the factor lattice: A basis $\basec$
for the factor lattice is positively oriented if $\det(\base|\basec)=1$
for a positively oriented basis $\base$ of $\lat$. Thus, the shape
in $\shapespace{n-d}$ (where $d=\rank{\lat}$) and direction in $\gras{n-d,n}$
of $\latfull/\lat$ are well defined. 

\paragraph*{Flags of lattices.}

A flag of lattices is a finite sequence of lattices in $V$ with strictly
increasing ranks:
\[
\left\{ 0\right\} =\Flat{\lat}0<\Flat{\lat}1<\cdots<\Flat{\lat}{\leng}<V,
\]
where $\Flat{\lat}{\leng}$ is a full lattice in $V$. We say that
a flag of lattices is \emph{primitive} if every $\Flat{\lat}{j-1}$
is primitive in $\Flat{\lat}j$; notice that a flag is primitive if
and only if there exists $g\in\gl n(\RR)$ such that the first $d_{1}$
columns of $g$ span $\Flat{\lat}1$, the first $d_{1}+d_{2}$ columns
span $\Flat{\lat}2$, and so forth, where the whole $n=d_{1}+\cdots+d_{\leng}$
columns span $\Flat{\lat}{\leng}$. We let 
\[
\flag_{g}=(\,\cbrac 0=\Flat{V_{g}}0<\Flat{V_{g}}1<\cdots<\Flat{V_{g}}{\leng}=\RR^{n}\,)
\]
denote the $\dvec$-flag of real vector spaces spanned by $g$, set
the notation $\lat_{g}$ for the lattice spanned by the columns of
$g$, and then let $\flag_{g}\brac{\ZZ}$ denote the (primitive) flag
of lattices spanned by $g$:
\[
\flag_{g}\brac{\ZZ}=\,\flag_{g}\cap\lat_{g}\,=\,(\,\cbrac 0=\Flat{\lat_{g}}0<\Flat{\lat_{g}}1<\cdots<\Flat{\lat_{g}}{\leng}=\lat_{g}\,),
\]
namely $\Flat{\lat_{g}}i=\Flat{V_{g}}i\cap\lat_{g}$ for every $i=1,\ldots,\leng$. 

An \emph{orientation} on a flag of lattices (or on the flag of subspaces
that it spans) is a choice of orientation on a basis for $\Flat{\lat}1$,
then a choice of orientation on $\Flat{\lat}2/\Flat{\lat}1$, and
so on; for the flag spanned by $g$, this means choosing an orientation
separately on every block of columns $\sbrac{d_{1}+\cdots+d_{i-1}+1,d_{1}+\cdots+d_{i}}$,
for $i=1,\ldots,\leng$. All in all, each $\dvec$-flag has $2^{\leng}$
possible orientations, and we say that an oriented flag $\flag_{g}$
is \emph{positive} if $\det(g)>0$ (so, a positive flag has $2^{\leng-1}$
possible orientations). Let\setlength{\abovedisplayskip}{4pt} 
\setlength{\belowdisplayskip}{6pt}
\begin{align*}
\gras{\dvec,n} & =\text{set of positive (oriented) \ensuremath{\dvec}-flags in \ensuremath{\RR^{n}},}\\
 & =\so n(\RR)/\left[\begin{smallmatrix}\so{d_{1}}\left(\RR\right) & \cdots & 0\\
\vdots & \ddots & \vdots\\
0 & \cdots & \so{d_{\leng}}\left(\RR\right)
\end{smallmatrix}\right],
\end{align*}
which is a $2^{\leng-1}$\textendash cover of $\grass{\dvec,n}$.
A primitive oriented flag of lattices is called \emph{unimodular}
if it is positive and all the successive quotients $\Flat{\lat}j/\Flat{\lat}{j-1}$
$1\leq j\leq\leng$ have covolume one; in particular, a unimodular
flag must be primitive (otherwise the quotients would not be lattices).

Notice that if $g\in\gl n(\ZZ)$ then all the lattices in $\flag_{g}\brac{\ZZ}$
are integral (the largest lattice is $\ZZ^{n}$) and so $\flag_{g}\brac{\ZZ}$
recovers $\flag\brac{\ZZ}$ from (\ref{eq: lat flag}); we refer to
such a flag as a \emph{primitive integral flag}. Then Theorems \ref{thm: Counitng with supremum}
and \ref{thm: counting anti canonical} concern counting primitive
integral flags, with consideration of their projections to $\shapespace{\underline{d}}$
((\ref{eq: flag shape space})) and $\grass{\dvec,n}$ ((\ref{eq: flag grassmannian (non-ortd)})).
The spaces $\shapespace{\underline{d}}$ and $\grass{\dvec,n}$ (resp.\
$\gras{\dvec,n}$) parameterize the shapes and directions of flags
(resp.\ oriented flags) of lattices; however, there exists a space
that parameterizes both of these properties. Consider first the space
of all primitive $\dvec$-flags in $\RR^{n}$,\setlength{\abovedisplayskip}{8pt} 
\setlength{\belowdisplayskip}{7pt}
\[
\gl n\left(\RR\right)/\left[\begin{smallmatrix}\gl{d_{1}}\left(\ZZ\right) & \RR & \RR\\
\vdots & \ddots & \RR\\
0 & \cdots & \gl{d_{\leng}}\left(\ZZ\right)
\end{smallmatrix}\right],
\]
which has infinite volume; compare to the infinite-volume space of
full lattices in $\RR^{n}$, $\gl n(\RR)/\gl n(\ZZ)$,  where to
obtain a finite volume space one restricts to the space of full unimodular
lattices, $\latspace n$. Aiming to imitate this construction, we
define the following subgroup of $\sl n(\RR)$\setlength{\abovedisplayskip}{4pt} 
\setlength{\belowdisplayskip}{5pt}
\begin{equation}
A^{\prime}=\left\{ \left[\begin{smallmatrix}\a_{1}^{\frac{1}{d_{1}}}\idmat{d_{1}} & 0 & \cdots & 0\\
0 & \ddots &  & \vdots\\
\vdots &  & \a_{\leng-1}^{\frac{1}{d_{\leng-1}}}\idmat{d_{\leng-1}} & 0\\
0 & \cdots & 0 & (\alpha_{1}\cdots\alpha_{\leng-1}){}^{\frac{1}{d_{\leng}}}\idmat{d_{\leng}}
\end{smallmatrix}\right]:\a_{1},\ldots,\a_{\leng-1}>0\right\} \label{eq: def A'}
\end{equation}
and consider the space
\begin{align*}
\flagspace{\dvec} & =\text{space of unimodular \ensuremath{\dvec}-flags in \ensuremath{\RR^{n}}}\\
 & =\sl n\left(\RR\right)/\left[\begin{smallmatrix}\sl{d_{1}}\left(\ZZ\right) & \RR^{d_{1}\times d_{2}} & \cdots & \RR^{d_{1}\times d_{\leng}}\\
0 & \ddots & \ddots & \vdots\\
\vdots & \ddots & \sl{d_{\leng-1}}\left(\ZZ\right) & \RR^{d_{\leng-1}\times d_{\leng}}\\
0 & \cdots & 0 & \sl{d_{\leng}}\left(\ZZ\right)
\end{smallmatrix}\right]\times A^{\prime}.
\end{align*}
In a similar way to how any positively-oriented full lattice in $\RR^{n}$
can be projected to the space $\latspace n$ of unimodular lattices
by rescaling to covolume one, any primitive positive $\dvec$-flag
of lattices in $\RR^{n}$ can be ``rescaled'' to a unimodular flag
by rescaling the successive quotiens of the flag. More concretely,
if $g=(\base_{1}|\cdots|\base_{\leng})$ is a basis for the flag,
then one rescales each $\base_{j}$ separately to obtain $\basec_{j}$
such that $\brac{|\det\brac{\basec_{j}^{\transpose}\basec_{j}}|}^{1/2}=1$.
This rescaling is exactly the role of modding out by $A^{\prime}$,
and we denote the rescaled $\flag\brac{\ZZ}$ by $\unilat{\flag\brac{\ZZ}}\in\flagspace{\dvec}$.

Throughout the next two sections, we will study the properties of
the space $\flagspace{\dvec}$, and its relation to $\shapespace{\underline{d}}$
and $\gras{\dvec,n}$.

\section{Refined Iwasawa components of $\protect\sl n\left(\protect\RR\right)$\label{sec: RI coordinates}}

As we have pointed out in the previous section, we should think of
the space of primitive unimodular flags $\flagspace{\dvec}$ as some
sort of an analog for the well known space $\latspace n$. Typically
(e.g. \cite[V]{Bekka_Mayer}), to study $\latspace n$, one uses the
Iwasawa decomposition on $\sl n(\RR)$, 
\[
\sl n(\RR)=KAN
\]
where $K=K_{n}=\so n(\RR)$, $A=A_{n}$ is the diagonal subgroup in
$\sl n(\RR)$ and $N=N_{n}$ is the upper unipotent subgroup. It is
also standard to denote $P=P_{n}=A_{n}N_{n}$. To study $\flagspace{\dvec}$
(as well as $\shapespace{\dvec}$ and $\gras{\dvec,n}$), we use a
refinement of the Iwasawa decomposition, which we now turn to define. 

\subsection{Refining the Iwasawa decomposition of $\protect\sl n\left(\protect\RR\right)$}

Consider the following block-diagonal subgroup of $G=G_{n}=\sl n(\RR)$:
\[
G^{\dprime}=\left(\begin{array}{ccc}
\sl{d_{1}}\left(\RR\right)\\
 & \ddots\\
 &  & \sl{d_{\leng}}\left(\RR\right)
\end{array}\right)\cong\prod_{i=1}^{\leng}G_{d_{i}},
\]
and write $G^{\dprime}=K^{\dprime}A^{\dprime}N^{\dprime}$ for the
Iwasawa decomposition of $G^{\dprime}$, where 
\[
\begin{array}{ccc}
K^{\dprime}=K\cap G^{\dprime}, & A^{\dprime}=A\cap G^{\dprime}, & N^{\dprime}=N\cap G^{\dprime}.\end{array}
\]
Then $K^{\dprime},A^{\dprime}$ and $N^{\dprime}$ are also block
diagonal, and
\[
\begin{array}{ccc}
K^{\dprime}\cong\prod_{i=1}^{\leng}K_{d_{i}}, & A^{\dprime}\cong\prod_{i=1}^{\leng}A_{d_{i}}, & N^{\dprime}\cong\prod_{i=1}^{\leng}N_{d_{i}}.\end{array}
\]
Let $P^{\dprime}=A^{\dprime}N^{\dprime}$ and 
\[
\wc=KP^{\dprime};
\]
notice that $\wc$ is \emph{not} a group, but it is a smooth manifold.
To complete the definition of the Refined Iwasawa decomposition, we
define $K^{\prime},A^{\prime},N^{\prime}$ that complete $K^{\dprime},A^{\dprime},N^{\dprime}$
to $K$, $A$ and $N$ respectively. Let\setlength{\abovedisplayskip}{3pt} 
\setlength{\belowdisplayskip}{6pt}
\[
N^{\prime}=\left[\begin{smallmatrix}\idmat{d_{1}} & \RR^{d_{1}\times d_{2}} & \cdots & \RR^{d_{1}\times d_{\leng}}\\
0 & \ddots & \ddots & \vdots\\
\vdots & \ddots & \idmat{d_{\leng-1}} & \RR^{d_{\leng-1}\times d_{\leng}}\\
0 & \cdots & 0 & \idmat{d_{\leng}}
\end{smallmatrix}\right]
\]
and $A^{\prime}$ as in (\ref{eq: def A'}); observe that $N=N^{\dprime}N^{\prime}$,
$A=A^{\dprime}A^{\prime}$, and that $A^{\prime}$  commutes with
$G^{\dprime}$. Fix a transversal $K^{\prime}$ of the diffeomorphism
$K/K^{\dprime}\to\gras{\dvec,n}$, meaning that $K=K^{\prime}K^{\dprime}$
and $\wc=K^{\prime}G^{\dprime}$. We can assume that $K^{\prime}$
satisfies a certain regularity property that is described in Condition
\ref{cond: K'}. Then the RI decomposition is given by\setlength{\abovedisplayskip}{3pt} 
\setlength{\belowdisplayskip}{6pt}

\[
G=K^{\prime}K^{\dprime}A^{\dprime}A^{\prime}N^{\dprime}N^{\prime}=K^{\prime}G^{\dprime}A^{\dprime}A^{\prime}N^{\prime}=\wc A^{\prime}N^{\prime}.
\]

\subsection{\label{subsec: RI Haar measure}Refining the Iwasawa decomposition
of the Haar measure}

It is well known (e.g. \cite[Prop.~8.43]{Knapp}) that a Haar measure
on $\sl n\left(\RR\right)$ can be decomposed according to the Iwasawa
components of $\sl n\left(\RR\right)$. Let us extend this to a Refined
Iwasawa decomposition of the Haar measure on $\sl n\left(\RR\right)$:
on every $\GIcomp$ appearing as a component in the Iwasawa or Refined
Iwasawa decompositions of $\sl n(\RR)$ (e.g.\ $S=N^{\prime}$, $K$,
$\wc$...) we define a Radon measure $\mu_{\GIcomp}$ such that the
Haar measure on $\sl n(\RR)$ (with the corresponding normalization)
is the product of the measures $\mu_{\GIcomp}$ on the components.
Denote by $\mass{\mu}$ the total mass of a finite measure $\mu$.

First of all, let us introduce a parameterization on $A\cong\RR^{n-1}$
and its subgroups $A^{\prime}\cong\RR^{\leng-1}$ and $A^{\dprime}\cong\RR^{n-\leng}$.
An element $a=\diag{a_{1},\ldots,a_{n}}\in A$ will be written as
$a_{\underline{h}}=a_{\left(h_{1},\ldots,h_{n-1}\right)}$ if 
\[
\left(a_{1},a_{2},\ldots,a_{n-1},a_{n}\right)=\brac{e^{-h_{1}/2},e^{\left(h_{1}-h_{2}\right)/2},\ldots,e^{\left(h_{n-2}-h_{n-1}\right)/2},e^{h_{n-1}/2}}.
\]
Accordingly, we write $a^{\dprime}\in A^{\dprime}$ as $a^{\dprime}=a_{\Flat{\underline{s}}1\ldots\Flat{\underline{s}}{\leng}}^{\dprime}$
with $\Flat{\underline{s}}i\in\RR^{d_{i}-1}$ if  $a^{\dprime}=\text{diag}(a_{\Flat{\underline{s}}1},...,a_{\Flat{\underline{s}}{\leng}})$
and $a_{\Flat{\underline{s}}j}\in A_{d_{j}}$. Finally, every element
in $A^{\prime}$ is of the form
\[
a_{\underline{t}}^{\prime}=a_{t_{1},\ldots,t_{\leng-1}}^{\prime}=\diag{e^{\frac{t_{1}}{d_{1}}}\idmat{d_{1}},e^{\frac{t_{2}-t_{1}}{d_{2}}}\idmat{d_{2}},\ldots,e^{\frac{t_{\leng-1}-t_{\leng-2}}{d_{\leng-1}}}\idmat{d_{\leng-1}},e^{-\frac{t_{\leng-1}}{d_{\leng}}}\idmat{d_{\leng}}}.
\]

We know that a Haar measure $\mu_{G_{n}}$ on $\sl n(\RR)$ can be
decomposed into measures on the Iwasawa subgroups as 
\[
d\mu_{G_{n}}=d\mu_{K_{n}}\times\frac{dh_{1}\cdots dh_{n-1}}{e^{h_{1}}\cdots e^{h_{n-1}}}\times d\mu_{N_{n}},
\]
where $\mu_{K_{n}}$ and $\mu_{N_{n}}$ are Haar measures, and each
$d_{h_{j}}$ is the Lebesgue measure on $\RR$. Let us define $\mu_{A_{n}}$
such that 
\[
d\mu_{A_{n}}=\frac{dh_{1}\cdots dh_{n-1}}{e^{h_{1}}\cdots e^{h_{n-1}}},
\]
fix $\mu_{N_{n}}$ to be the pullback of the Lebesgue measure through
any isomorphism $N_{n}\cong\RR^{\brac{{n \choose 2}-\sum_{j=1}^{\leng}d_{j}^{2}}/2}$,
and let $\mu_{K_{n}}$ be the Haar measure on $\so n(\RR)$ satisfying
that 
\begin{equation}
\mass{\mu_{K_{n}}}=\prod_{i=1}^{n-1}\leb{\sphere i},\label{eq: Haar K}
\end{equation}
where $\leb{\sphere i}$ is the Lesbegue measure of the $i$-th dimensional
unit sphere. The motivation for this choice is that, corresponding
to $\sphere{n-1}\cong K_{n}/K_{n-1}$, we have 
\[
\leb{\sphere{n-1}}=\mass{\mu_{K_{n}}}/\mass{\mu_{K_{n-1}}}.
\]
The choice of $\mu_{K_{n}}$ and $\mu_{N_{n}}$ determine a Haar measure
on $G_{n}$,
\[
\mu_{G_{n}}=\mu_{K_{n}}\times\mu_{A_{n}}\times\mu_{N_{n}},
\]
and since $G^{\dprime}\cong\prod G_{d_{j}}$ we let 
\[
\mu_{G^{\dprime}}=\prod\mu_{G_{d_{j}}}.
\]
The measures $\mu_{K^{\dprime}}$, $\mu_{A^{\dprime}}$ and $\mu_{N^{\dprime}}$
are also defined in that manner ($\mu_{K^{\dprime}}=\prod\mu_{K_{d_{j}}}$
etc.). They thus determine unique $\mu_{K^{\prime}}$, $\mu_{A^{\prime}}$
and $\mu_{N^{\prime}}$ such that 
\[
\mu_{K}=\mu_{K^{\dprime}}\times\mu_{K^{\prime}},\;\:\mu_{A}=\mu_{A^{\dprime}}\times\mu_{A^{\prime}},\;\:\mu_{N}=\mu_{N^{\dprime}}\times\mu_{N^{\prime}};
\]
indeed, $\mu_{N^{\prime}}$ is again a pullback of the Lebesgue measure
on $\RR^{\dim N^{\prime}}$,\setlength{\abovedisplayskip}{4pt} 
\setlength{\belowdisplayskip}{4pt}
\begin{equation}
d\mu_{A^{\prime}}=\prod_{j=1}^{\leng-1}e^{\brac{d_{j}+d_{j+1}}t_{j}}dt_{j},\label{eq: A' measure}
\end{equation}
and $\mu_{K^{\prime}}$ is the pullback of a $K$-invariant Radon
measure on $K/K^{\dprime}$ normalized such that $\mu_{K}=\mu_{K^{\dprime}}\times\mu_{K^{\prime}}$.
Finally, notice that $\wc$ is diffeomorphic to the group $K\times P^{\dprime}$;
hence, we equip it with the measure 
\begin{equation}
\mu_{\wc}=\mu_{K}\times\mu_{P^{\dprime}}=\mu_{K^{\prime}}\times\mu_{G^{\dprime}},\label{eq: measure Q}
\end{equation}
which is clearly invariant under the $K\times P^{\dprime}$ acting
by $\brac{g,h}\cdot q=\brac{g,h}\cdot kp^{\dprime}=gkp^{\dprime}h^{\transpose}$.
We now have that
\begin{align}
\begin{array}{c}
\mu_{G}=\mu_{K^{\prime}}\times\mu_{K^{\dprime}}\times\mu_{A^{\dprime}}\times\mu_{A^{\prime}}\times\mu_{N^{\dprime}}\times\mu_{N^{\prime}}\\
=\mu_{K^{\prime}}\times\mu_{G^{\dprime}}\times\mu_{A^{\prime}}\times\mu_{N^{\prime}}=\mu_{Q}\times\mu_{A^{\prime}}\times\mu_{N^{\prime}}
\end{array} & .\label{eq: Haar measure on G}
\end{align}

\section{\label{sec: Spread Models}Relation between the Refined Iwasawa components
and spaces of flags}

\begin{figure}
\begin{centering}
\includegraphics[scale=0.4]{fund_dom_SL_2,Z_full}
\par\end{centering}
\caption{\label{fig: fund dom SL(2,Z)}Fundamental domain $\protect\symfund 2$
for $\protect\sl 2\left(\protect\ZZ\right)$ in $P_{2}$ (the hyperbolic
upper half plane).}
\end{figure}

Let us continue the analysis of the spaces $\shapespace{\underline{d}}$,
$\flagspace{\dvec}$ and $\gras{\dvec,n}$ using the Refined Iwasawa
decomposition. Our primary goal is to show how these spaces interact
and to define measures on them. Let us begin with $\gras{\dvec,n}$.
Recalling (\ref{eq: Haar K}) and the fact that $\leb{\sphere{i-1}}=i\cdot\mathfrak{V}(i)$
we have that
\begin{equation}
\mass{\mu_{K_{n}}}=\prod_{i=1}^{n-1}\leb{\sphere i}=\prod_{i=2}^{n}\leb{\sphere{i-1}}=\frac{1}{2}\prod_{i=1}^{n}\leb{\sphere{i-1}}=\frac{1}{2}\prod_{i=1}^{n}i\cdot\mathfrak{V}(i);\label{eq: Kaar K (2)}
\end{equation}
As $\gras{\dvec,n}\diffeo K_{n}/\prod_{j=1}^{\leng}K_{d_{j}}$ (we
use the notation $\simeq$ to indicate a diffeomorphism), we let $\vol_{\gras{\dvec,n}}$
be the unique $K_{n}$-invariant measure on $\gras{\dvec,n}$ normalized
such that 
\begin{align*}
\mass{\vol_{\gras{\dvec,n}}} & =\mass{\mu_{K_{n}}}/\prod_{j=1}^{\leng}\mass{\mu_{K_{d_{j}}}}.
\end{align*}
By (\ref{eq: Kaar K (2)}) this equals 
\[
=\frac{\frac{1}{2}\prod_{i=1}^{n}i\cdot\mathfrak{V}(i)}{\frac{1}{2^{\leng}}\prod_{i=1}^{\leng}\prod_{j=1}^{d_{i}}\prod_{i=1}^{n}j\cdot\mathfrak{V}(j)}=2^{\leng-1}\frac{n!}{d_{1}!\cdots d_{\leng}!}\frac{\prod_{i=1}^{n}\mathfrak{V}(i)}{\prod_{i=1}^{\leng}\prod_{j=1}^{d_{i}}\mathfrak{V}(j)}.
\]
Naturally, we let $\vol_{\grass{\dvec,n}}$ the $K_{n}$-invariant
measure such that
\begin{align}
\mass{\vol_{\grass{\dvec,n}}} & =\frac{1}{2^{\leng-1}}\mass{\vol_{\gras{\dvec,n}}}=\frac{n!}{d_{1}!\cdots d_{\leng}!}\frac{\prod_{i=1}^{n}\mathfrak{V}(i)}{\prod_{i=1}^{\leng}\prod_{j=1}^{d_{i}}\mathfrak{V}(j)}.\label{eq: Geassmannians vol}
\end{align}

On the remaining spaces, we define measures using the standard procedure
of (i) presenting a space as a quotient of a homogeneous manifold
by the action of a discrete group; (ii), identifying a fundamental
domain in the manifold for that action; (iii) defining the measure
on the space as the invariant measure on the manifold, restricted
to the fundamental domain (or rather, the pullback of this measure
through the inverse of the quotient map, which is one to one on the
fundamental domain). Recall the following construction of fundamental
domains representing 
\[
\begin{array}{ccccccccc}
\latspace n & = & \sl n(\RR)/\sl n(\ZZ) &  & \text{and} &  & \shapespace n & = & \so n(\RR)\backslash\sl n(\RR)/\sl n(\ZZ)\\
 &  &  &  &  &  &  & \simeq & P_{n}/\sl n(\ZZ)
\end{array}.
\]
Let $\symfund n\subset P_{n}$ be the standard Siegel fundamental
domain for right action of $\sl n(\ZZ)$ (see figure \ref{fig: fund dom SL(2,Z)}
for the case $n=2$).\footnote{The construction is essentially due to Siegel and is explicated in
\cite{Grenier_93,Schmidt_98} and \cite[VII]{HK_WellRoundedness}.} Let $\groupfund n$ be the lift of $\symfund n$ to $\sl n(\RR)$,
which is 
\begin{equation}
\groupfund n=\bigcup_{z\in\symfund n}K_{z}\cdot z\label{eq: fund SL(Z)}
\end{equation}
(\cite[Thm.~7.10 and Prop.~7.13]{HK_WellRoundedness}), where for
every $z\in\symfund n$, the notation $K_{z}\subset\so n\left(\RR\right)$
stands for a fundamental domain for the finite group of elements in
$\so{}(V_{\lat})$ that preserves $\lat$, $\sym^{+}\brac{\lat_{z}}$.
It is known (\cite[Lem.~6]{Schmidt_98}) that for almost every $z\in\interior{\symfund n}$
one has that $\sym^{+}\brac{\lat_{z}}=Z(K)$ where $Z\brac K$ is
the center of $K$. Letting $K_{\text{gen}}$ denote the generic fiber,
we have
\begin{equation}
\groupfund n=K_{\text{gen}}\cdot\interior{\symfund n}\cup\bigcup_{z\in\del\symfund n}K_{z}\cdot z\label{eq: fund SL(Z)-1}
\end{equation}
and therefore
\[
\mu_{G_{n}}\brac{\groupfund n}=\mu_{K_{n}}\brac{K_{n}/Z(K_{n})}\cdot\mu_{P^{\dprime}}\brac{\symfund n},=\mass{\mu_{K_{n}}}\mu_{P^{\dprime}}\brac{\symfund n}/\sgn n
\]
where
\[
\sgn n=\sbrac{K_{n}:Z(K_{n})}=\begin{cases}
1 & n\equiv1(\text{mod }2)\\
2 & n\equiv0(\text{mod }2)
\end{cases}.
\]
We refer to \cite{Volumes} for the fact that 
\begin{equation}
\mu_{G_{n}}\brac{\groupfund n}=\prod_{i=2}^{n}\zeta\left(i\right).\label{eq: Garrett}
\end{equation}
Recalling (\ref{eq: Haar K}) and the fact that $\leb{\sphere{i-1}}=i\cdot\mathfrak{V}(i)$
we have that
\[
\mass{\mu_{K_{n}}}=\prod_{i=1}^{n-1}\leb{\sphere i}=\prod_{i=2}^{n}\leb{\sphere{i-1}}=\frac{1}{2}\prod_{i=1}^{n}\leb{\sphere{i-1}}=\frac{1}{2}\prod_{i=1}^{n}i\cdot\mathfrak{V}(i);
\]
then, by (\ref{eq: fund SL(Z)-1}),
\[
\mu_{P_{n}}\brac{\symfund n}=\mu_{G_{n}}\brac{\groupfund n}\sbrac{K_{n}:Z(K_{n})}/\mass{\mu_{K_{n}}}=2\sgn n\frac{\prod_{i=2}^{d}\zeta\left(i\right)}{\prod_{i=1}^{n}i\cdot\mathfrak{V}(i)}\,.
\]
The sets of representatives $\symfund n$, $\groupfund n$ and $K^{\prime}$
allow us to construct fundamental domains for $\shapespace{\dvec}$
and $\flagspace{\dvec}$. Quite naturally, a fundamental domain for
$\shapespace{\underline{d}}\diffeo P^{\dprime}/G^{\dprime}\brac{\ZZ}$
is 
\[
\prod_{j=1}^{\leng}\symfund{d_{j}}\subset P^{\dprime},\;\text{with}\;\,\ensuremath{\mu_{P^{\dprime}}}(\prod_{j=1}^{\leng}\symfund{d_{j}})=\prod_{j=1}^{\leng}\ensuremath{\mu_{P_{d_{j}}}}\brac{\symfund{d_{j}}}.
\]
Also, 
\[
K^{\prime}\prod_{j=1}^{\leng}\groupfund{d_{j}}\subset K^{\prime}G^{\dprime}=\wc
\]
is a fundamental domain for 
\[
\flagspace{\dvec}=\sl n\left(\RR\right)/G^{\dprime}\left(\ZZ\right)N^{\prime}A^{\prime}=\wc/G^{\dprime}\left(\ZZ\right),
\]
and by (\ref{eq: measure Q}) its measure in $\wc$ is
\[
\mu_{\wc}\brac{K^{\prime}\prod_{j=1}^{\leng}\groupfund{d_{j}}}=\mu_{K^{\prime}}(K^{\prime})\mu_{G^{\dprime}}(\prod_{j=1}^{\leng}\groupfund{d_{j}}).
\]
Now, the measures on $\shapespace{\dvec}$ and $\flagspace{\dvec}$
are defined so that they correspond to the homogeneous measures on
the ambient manifolds, restricted to the fundemental domains. We list
them for future reference: 
\begin{defn}
\label{prop: spread models that we need}We let $\vol_{\latspace n}=\mu_{G_{n}}|_{\groupfund n}$
and $\vol_{\shapespace n}=\mu_{P_{n}}|_{\symfund n}$ , so that
\[
\mass{\vol_{\latspace n}}=\prod_{i=2}^{n}\zeta\left(i\right),\quad\mass{\vol_{\shapespace n}}=2\sgn n\frac{\prod_{i=2}^{d}\zeta\left(i\right)}{\prod_{i=1}^{n}i\cdot\mathfrak{V}(i)}.
\]
Set  $\vol_{\shapespace{\underline{d}}}=\prod_{j=1}^{\leng}\vol_{\shapespace{d_{j}}}$.
Let $\vol_{\flagspace{\dvec}}=\mu_{\wc}|_{K^{\prime}\prod_{j=1}^{\leng}\groupfund{d_{j}}}$
so that in particular
\begin{align*}
\mass{\vol_{\flagspace{\dvec}}} & =\mass{\vol_{\gras{\dvec,n}}}\cdot\prod_{j=1}^{\leng}\vol_{\latspace{d_{j}}}\\
 & =2^{\leng-1}\frac{n!}{d_{1}!\cdots d_{\leng}!}\frac{\prod_{i=1}^{n}\mathfrak{V}(i)}{\prod_{i=1}^{\leng}\prod_{j=1}^{d_{i}}\mathfrak{V}(j)}\cdot\prod_{j=1}^{\leng}\prod_{i=2}^{d_{j}}\zeta\left(i\right).
\end{align*}
The probability measures corresponding to $\vol_{\flagspace{\dvec}}$,
$\vol_{\shapespace{\dvec}}$, $\vol_{\gras{\dvec,n}}$ (and appearing
in Theorems \ref{thm: Counitng with supremum} and \ref{thm: counting anti canonical})
are denoted
\[
\vol_{\flagspace{\dvec}}^{1}\,,\,\vol_{\shapespace{\dvec}}^{1}\,,\,\vol_{\gras{\dvec,n}}^{1}.
\]
\end{defn}

The  fundamental domains in Def.\ \ref{prop: spread models that we need}
represent the corresponding spaces not only in terms of the measure.
They also have the property that the image of a ``nice enough''
set in the space, is a ``nice enough'' set in the associated fundamental
domain. We denote by $\parby{\wc}{\pairset}$ the image of $\pairset$
in $K^{\prime}\prod_{j=1}^{\leng}\groupfund{d_{j}}\subset\wc$, by
$P_{\symset}^{\dprime}$ the image of $\symset$ in $\prod\symfund{d_{j}}\subset P^{\dprime}$,
by $K_{\sphereset}^{\prime}$ the image of $\sphereset\subseteq\gras{\dvec,n}$
in $K^{\prime}$, and so forth. To make precise what we mean by ``nice
enough'', consider the following definition. 
\begin{defn}
\label{def: BCS}A subset $B$ of an orbifold $\manifold$ will be
called \emph{boundary controllable } if for every $x\in\mathcal{M}$
there is an open neighborhood $U_{x}$ of $x$ such that $U_{x}\cap\del B$
is contained in a finite union of embedded $C^{1}$ submanifolds of
$\mathcal{M}$, whose dimension is strictly smaller than $\dim\manifold$.
In particular, $B$ is boundary controllable if its (topological)
boundary consists of finitely many  subsets of embedded $C^{1}$
submanifolds.
\end{defn}

\begin{lem}
\label{prop: spread model BCS}If a subset of any of the spaces appearing
in Def.\ \ref{prop: spread models that we need} is boundary controllable,
then so is its image in the associated set of representatives (e.g.\
if $\pairset\subseteq\flagspace{\dvec}$ is boundary controllable,
then so is $\parby{\wc}{\pairset}$).
\end{lem}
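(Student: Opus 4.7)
The plan is to establish the lemma uniformly across the five spaces by decomposing the topological boundary of the image set in the fundamental domain into two contributions---one coming from $\partial B$ pulled back through the quotient map, and one coming from the fundamental-domain boundary itself---and then controlling each piece via a local-diffeomorphism argument. I describe the strategy in the hardest case, $\pairset \subseteq \flagspace{\dvec}$ with image $\parby{\wc}{\pairset} \subseteq \wc$; the remaining four cases follow analogously or are simpler, since their quotient maps involve only compact or finite stabilizers (and for $\gras{\dvec,n}$ the map is a diffeomorphism onto $K^\prime$).

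The argument rests on two ingredients. First, the fundamental domain $F := K^\prime \prod_{j=1}^{\leng} \groupfund{d_j}$ has boundary $\partial F \subset \wc$ which is locally a finite union of embedded $C^1$ submanifolds of strictly smaller dimension. For each Siegel domain $\symfund{d_j}$, this follows from its explicit cut-out in $P_{d_j}$ by finitely many real-analytic inequalities in the refined Iwasawa coordinates (cf.\ \cite{Grenier_93,Schmidt_98,HK_WellRoundedness}); the lift $\groupfund{d_j}$ from (\ref{eq: fund SL(Z)-1}) inherits a piecewise smooth boundary because the exceptional fibers $K_z \neq K_{\mathrm{gen}}$ occur only over a lower-dimensional subset of $\symfund{d_j}$; and the transversal $K^\prime$ contributes a piecewise smooth boundary by Condition \ref{cond: K'}. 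Second, the quotient map $\pi : \wc \to \flagspace{\dvec} = \wc/G^{\dprime}(\ZZ)$ is a local diffeomorphism outside a closed subset $\Sigma \subset \wc$ of strictly smaller dimension (the orbifold singular locus, consisting of points with non-trivial stabilizer under $G^{\dprime}(\ZZ)$), and $\Sigma$ is itself locally a finite union of embedded $C^1$ submanifolds.

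Granted these, fix $x \in \wc$ and set $y = \pi(x) \in \flagspace{\dvec}$. Use Def.\ \ref{def: BCS} to pick a neighborhood $U_y$ of $y$ with $U_y \cap \partial \pairset \subseteq \bigcup_{i=1}^k N_i$ for finitely many embedded $C^1$ submanifolds $N_i$ of strictly smaller dimension. Shrink $U_y$ and choose a neighborhood $V_x$ of $x$ in $\wc$ small enough that $\pi|_{V_x \setminus \Sigma}$ is a diffeomorphism onto an open subset of $U_y$. A direct check of which points of $\wc$ can bound $F \cap \pi^{-1}(\pairset)$ gives
\[
\partial \parby{\wc}{\pairset} \cap V_x \;\subseteq\; \bigcup_{i=1}^k \pi^{-1}(N_i) \cap V_x \;\cup\; (\partial F \cap V_x) \;\cup\; (\Sigma \cap V_x),
\]
which by the two ingredients is a finite union of embedded $C^1$ submanifolds of $V_x$ of strictly smaller dimension. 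Hence $\parby{\wc}{\pairset}$ is boundary controllable.

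The main obstacle is ingredient one, specifically verifying that $\groupfund{d_j}$ has piecewise $C^1$ boundary in $\sl{d_j}(\RR)$ despite the $z$-dependence of the fiber $K_z$ in (\ref{eq: fund SL(Z)-1}). I would handle this by stratifying $\symfund{d_j}$ according to the conjugacy type of $\sym^+(\lat_z)$ and observing that every non-generic stratum is a lower-dimensional real-analytic subset, so that $\partial \groupfund{d_j}$ splits as the $K_{\mathrm{gen}}$-bundle over $\partial \symfund{d_j}$ together with a strictly lower-dimensional, piecewise smooth remainder coming from the jumps in the fiber. The rest of the argument is then a bookkeeping exercise.
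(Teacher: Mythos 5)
Your proposal is correct and takes essentially the same route as the paper's. The paper's argument is terser: it observes that the image $\parby{\wc}{\pairset}$ equals the intersection of the lift $\pi^{-1}(\pairset)$ (under $\pi:\wc\to\flagspace{\dvec}$) with the set of representatives $F=K^{\prime}\prod_{j}\groupfund{d_{j}}$, and then invokes the two facts that (i) the preimage of a boundary controllable set under the quotient map is boundary controllable, and (ii) the intersection of two boundary controllable sets is boundary controllable. Your argument unwinds (i) and (ii) in a single local inclusion, bounding $\partial\parby{\wc}{\pairset}$ inside $\pi^{-1}(\partial\pairset)\cup\partial F\cup\Sigma$. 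One small remark: the $\Sigma$-term is harmless but unnecessary. Since the quotient map $\pi$ is continuous and open, one has $\partial(\pi^{-1}(\pairset))\subseteq\pi^{-1}(\partial\pairset)$ directly, so the orbifold singular locus contributes no new boundary; you need $\Sigma$ only to run the local-diffeomorphism description of $\pi$, and its lower dimensionality guarantees it can be absorbed anyway. The place where you supply genuine content beyond the paper's phrasing is the stratification argument showing that $\partial\groupfund{d_{j}}$ is locally a finite union of $C^{1}$ submanifolds despite the $z$-dependent fibers $K_{z}$ in (\ref{eq: fund SL(Z)-1}); the paper simply asserts that the set of representatives is boundary controllable, effectively delegating this to \cite{HK_WellRoundedness}. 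Both proofs arrive at the same conclusion by the same decomposition of the boundary.
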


\begin{proof}
If $\pairset\subseteq\flagspace{\dvec}$ is boundary controllable,
then so is its lift to $\wc$ (that is, its inverse image under the
quotient map $\wc\to\wc/G^{\dprime}\left(\ZZ\right)=\flagspace{\dvec}$).
The image $\parby{\wc}{\pairset}$ is the intersection of this lift
with the set of representatives $K^{\prime}\prod_{j=1}^{\leng}\groupfund{d_{j}}$,
which is also boundary controllable. The intersection of two boundary
controllable sets is boundary controllable. The other cases are handled
similarly.
\end{proof}
Lemma \ref{prop: spread model BCS} handles the connection between
boundary controllable sets in the spaces $\shapespace{\underline{d}}$,
$\latspace{\dvec}$ and $\flagspace{\dvec}$ \textemdash{} namely
spaces that are expressed as quotients by actions of discrete subgroups
\textemdash{} to boundary controllable sets in the associated sets
of representatives. But something similar can also be said for the
space $\gras{\dvec,n}$, namely that $K^{\prime}$ can be chosen to
satisfy the following property (\cite[Lem.~3.4 (ii)]{HK_gcd}):
\begin{condition}
\label{cond: K'}The set of representatives $K^{\prime}\subset K$
satisfies that if $\sphereset\subseteq\gras{\dvec,n}$ and $\Gset\subseteq K^{\dprime}$
are boundary controllable, then so is $K_{\sphereset}^{\prime}\Gset\subseteq K$.
\end{condition}

With the choice of volumes declared in Def.\ \ref{prop: spread models that we need},
we have the following relations between the spaces $\flagspace{\dvec}$,
$\shapespace{\underline{d}}$ and $\gras{\dvec,n}$.
\begin{prop}
\label{lem: lift to G''}\newcommandx\proj[2][usedefault, addprefix=\global, 1=, 2=]{\pi_{#1\to#2}}%
The following hold:
\begin{enumerate}
\item There exist natural projections from $\flagspace{\dvec}$ to $\shapespace{\underline{d}}$,
to $\gras{\dvec,n}$, and to $\shapespace{\underline{d}}\times\gras{\dvec,n}$.
\item Assume that $\pairset\subseteq\flagspace{\dvec}$ is the inverse image
of $\symset\times\sphereset\subseteq\shapespace{\underline{d}}\times\gras{\dvec,n}$
under the projection from part 1, $\proj[\flagspace{\dvec}][\shapespace{\underline{d}}\times\gras{\dvec,n}]$.
If $\symset\subseteq\shapespace{\dvec}$ and $\sphereset\subseteq\gras{\dvec,n}$
are measurable, then so is $\pairset$ and 
\[
\vol_{\flagspace{\dvec}}\brac{\pairset}=\vol_{\shapespace{\dvec}}\brac{\symset}\vol_{\gras{\dvec,n}}\brac{\sphereset}\cdot\prod_{i=1}^{\leng}\sgn{d_{i}}.
\]
\item If $\symset$ and $\sphereset$ are boundary controllable, then so
is $\pairset$. 
\end{enumerate}
\end{prop}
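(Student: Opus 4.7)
The plan is to leverage the Refined Iwasawa decomposition of Section \ref{sec: RI coordinates} to realize $\flagspace{\dvec}$ as a product of $\gras{\dvec,n}$ and $\prod_{j=1}^{\leng}\latspace{d_{j}}$, and then read off all three claims from this product structure.

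\emph{Part (1).} Since $G = \wc A'N'$ by the Refined Iwasawa decomposition and the stabilizer $G^{\dprime}(\ZZ)N'A'$ already contains the $A'N'$ factor, the quotient map factors as $\flagspace{\dvec} \simeq \wc/G^{\dprime}(\ZZ)$. Writing elements of $\wc = K'G^{\dprime}$ uniquely as $k'g^{\dprime}$ (which is well defined because $K'\cap G^{\dprime} \subseteq K'\cap K^{\dprime} = \{1\}$, as $K'$ is a transversal for $K/K^{\dprime}$), the right action of $G^{\dprime}(\ZZ)$ affects only the $G^{\dprime}$ factor, yielding
\[
\flagspace{\dvec} \;\simeq\; K' \times \prod_{j=1}^{\leng}\latspace{d_{j}} \;\simeq\; \gras{\dvec,n} \times \prod_{j=1}^{\leng}\latspace{d_{j}}.
\]
The natural projections to $\gras{\dvec,n}$, to $\shapespace{\dvec} = \prod_{j}\shapespace{d_{j}}$ (via the blockwise quotients $\latspace{d_{j}}\to\shapespace{d_{j}}$), and to their product follow immediately.

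\emph{Part (2).} On the fundamental domain $K'\prod_{j}\groupfund{d_{j}}$ for $\flagspace{\dvec}$, the measure decomposes as $\mu_{\wc} = \mu_{K'}\times\prod_{j}\mu_{G_{d_{j}}}$ by (\ref{eq: measure Q}). The preimage of $\symset\times\sphereset$ splits as a product $K'_{\sphereset}\times\tilde{F}_{\symset}$, where $K'_{\sphereset}\subseteq K'$ is the preimage of $\sphereset$ and $\tilde{F}_{\symset}\subseteq\prod_{j}\groupfund{d_{j}}$ is the preimage of $\symset$ under $\prod_{j}\groupfund{d_{j}}\to\prod_{j}\symfund{d_{j}}$. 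Measurability is inherited from the factors, and Fubini gives
\[
\vol_{\flagspace{\dvec}}\brac{\pairset} \;=\; \mu_{K'}\brac{K'_{\sphereset}} \cdot \mu_{\prod_{j}G_{d_{j}}}\brac{\tilde{F}_{\symset}}.
\]
The first factor equals $\vol_{\gras{\dvec,n}}\brac{\sphereset}$ by the construction of $\vol_{\gras{\dvec,n}}$. For the second, the description (\ref{eq: fund SL(Z)-1}) exhibits each $\groupfund{d_{j}}$ as a fibration over $\symfund{d_{j}}$ whose generic fiber is $K_{d_{j}}/Z(K_{d_{j}})$ and whose non-generic locus has $\mu_{P_{d_{j}}}$-measure zero; integrating along fibers yields a constant proportionality between $\mu_{\prod_{j}G_{d_{j}}}\brac{\tilde F_{\symset}}$ and $\vol_{\shapespace{\dvec}}\brac{\symset}$. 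Combining with the normalizations fixed in Definition \ref{prop: spread models that we need} and bookkeeping the resulting $\sgn{d_{j}}$ factors produces the stated identity.

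\emph{Part (3).} The boundary of $K'_{\sphereset}\times\tilde{F}_{\symset}$ inside $\wc$ is contained in $(\del K'_{\sphereset}\times\tilde{F}_{\symset})\cup(K'_{\sphereset}\times\del\tilde{F}_{\symset})$. Condition \ref{cond: K'} guarantees that $K'_{\sphereset}$ is boundary controllable inside $K$, and Lemma \ref{prop: spread model BCS} does the same for each factor of $\tilde{F}_{\symset}$ inside $\prod_{j}\groupfund{d_{j}}$. Since boundary controllability is preserved under finite products and finite unions, that of $\pairset$ inside $\flagspace{\dvec}$ follows. The main technical care I anticipate lies in handling the measure-zero exceptional loci throughout — both the non-generic symmetry fibers of each $\groupfund{d_{j}}\to\symfund{d_{j}}$ and the boundaries of the Siegel fundamental domains themselves — so that they disrupt neither the Fubini computation in part (2) nor the dimension count that drives the boundary analysis in part (3).
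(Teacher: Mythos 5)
Your proposal follows essentially the same route as the paper's proof: realize $\flagspace{\dvec}\simeq\wc/G^{\dprime}(\ZZ)$, use $\wc=K'G^{\dprime}$ and the product formula $\mu_{\wc}=\mu_{K'}\times\mu_{G^{\dprime}}$, and read off the three claims from this structure. Two steps, however, are asserted rather than carried out, and one of them as written would not go through.

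In part (2) the constant $\prod_i\sgn{d_i}$ is not derived but deferred to ``bookkeeping''; the paper obtains it by passing $\tilde F_{\symset}$ through $\mu_{G^{\dprime}}=\mu_{K^{\dprime}}\times\mu_{P^{\dprime}}$ together with the fiber description (\ref{eq: fund SL(Z)-1}), which gives $\mu_{K^{\dprime}}(K^{\dprime}_{\text{gen}})$ proportional to $1/[K^{\dprime}:Z(K^{\dprime})]=1/\prod_i\sgn{d_i}$, and this is exactly where the index enters, so it needs to be computed. In part (3) you invoke Condition \ref{cond: K'} to conclude that ``$K'_{\sphereset}$ is boundary controllable inside $K$,'' but the condition concerns products $K'_{\sphereset}\Gset\subseteq K$ for a boundary controllable $\Gset\subseteq K^{\dprime}$, not $K'_{\sphereset}$ by itself (which has positive codimension in $K$). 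The paper applies Condition \ref{cond: K'} to the sets $K'_{\sphereset}K_z^{\dprime}$ after mapping $\wc\to K\times P^{\dprime}$, which is the point at which the fact that $\groupfund{d_j}$ entangles the $K^{\dprime}$- and $P^{\dprime}$-parts (variable fibers $K_z^{\dprime}$ over $z\in\symfund{d_j}$) gets resolved. Your regrouping into $K'_{\sphereset}\times\tilde F_{\symset}\subset K'\times G^{\dprime}$ is a valid equivalent picture, but you then need $\tilde F_{\symset}$ to be boundary controllable in $G^{\dprime}$, and Lemma \ref{prop: spread model BCS} as stated only delivers boundary controllability of $P^{\dprime}_{\symset}$ in $P^{\dprime}$; an extra argument (e.g.\ first lifting $\symset$ to a boundary controllable $\groupset\subseteq\latspace{\dvec}$ and then applying the lemma to $\latspace{\dvec}$, or repeating the paper's generic/non-generic split) is still needed. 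You correctly anticipate this subtlety in your closing remark, but the write-up does not yet execute it.
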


\begin{proof}
For the first part: the projection $\pi_{\flagspace{\dvec}\to\shapespace{\underline{d}}}$
is given by quotienting from the left by $\so n(\RR)$, the projection
$\pi_{\flagspace{\dvec}\to\gras{\dvec,n}}$ is the one induced by
the projection $\sl n(\RR)\to K/K^{\dprime}$ given by $kan\mapsto kK^{\dprime}$,
and the projection $\proj[\flagspace{\dvec}][\shapespace{\underline{d}}\times\gras{\dvec,n}]$
is the product of the latter two. For the second and third parts,
let $\groupset\subseteq\latspace{\dvec}$ be the inverse image of
$\symset$ under the natural projection $\latspace{\dvec}\to\shapespace{\underline{d}}$.
By (\ref{eq: fund SL(Z)-1}),
\[
\parby{\wc}{\pairset}=K_{\sphereset}^{\prime}G_{\groupset}^{\dprime}=\left(K_{\sphereset}^{\prime}K_{\text{gen}}^{\dprime}\cdot\brac{P_{\symset}^{\dprime}\cap\prod_{j=1}^{\leng}\interior{\symfund{d_{j}}}}\right)\cup\bigcup_{z\in P_{\symset}^{\dprime}\cap\del\prod_{j=1}^{\leng}\symfund{d_{j}}}K_{\sphereset}^{\prime}K_{z}^{\dprime}\cdot z.
\]
It is sufficient to show that the image of $\parby{\wc}{\pairset}$
under the diffeomorphism $\wc\to K\times P^{\dprime}$, which is obvious
from the above equation, is boundary controllable. As for the $K$
part of this image, we have that every $K_{z}^{\dprime}$ (and in
particular $K_{\text{gen}}^{\dprime}$) is boundary controllable in
$K^{\dprime}$, and hence by Condition \ref{cond: K'} every $K_{\sphereset}^{\prime}K_{z}^{\dprime}$
is boundary controllable in $K$. As for the $P^{\dprime}$ part of
this image, we have that: $P_{\symset}^{\dprime}$ is boundary controllable,
because of the assumption on $\symset$ and Lemma \ref{prop: spread model BCS};
$\interior{\symfund n}$ is boundary controllable, since $\symfund n$
is; hence $P_{\symset}^{\dprime}\cap\interior{\symfund n}$ is boundary
controllable, as an intersection of such. Moreover $P_{\symset}^{\dprime}\cap\del\symfund n$
is boundary controllable, since it is its own boundary. We conclude
that $\parby{\wc}{\pairset}$ is boundary controllable, and therefore
 $\pairset$ is. This proves the third part, but also that 
\[
\mu_{\wc}(\parby{\wc}{\pairset})=\mu_{K^{\prime}}(K_{\sphereset}^{\prime})\mu_{K^{\dprime}}(K_{\text{gen}}^{\dprime})\mu_{P^{\dprime}}(P_{\symset}^{\dprime})=\mu_{K^{\prime}}(K_{\sphereset}^{\prime})\mu_{P^{\dprime}}(P_{\symset}^{\dprime})/[K^{\dprime}:Z(K^{\dprime})],
\]
where in the first equality we used $\mu_{\wc}=\mu_{K^{\prime}}\times\mu_{G^{\dprime}}$
and $\mu_{G^{\dprime}}=\mu_{K^{\dprime}}\times\mu_{P^{\dprime}}$
((\ref{eq: Haar measure on G})). Since $[K^{\dprime}:Z(K^{\dprime})]=\prod_{i=1}^{\leng}\sgn{d_{i}}$,
and by Def.\ \ref{prop: spread models that we need}, we confirm
the third statement.
\end{proof}

\section{\label{sec: General Thm}A more general theorem}

The natural map from the space $\flagspace{\dvec}$ to $\shapespace{\dvec}\times\gras{\dvec,n}$
(Prop.\ \ref{lem: lift to G''}) hints at the fact that counting
primitive integral flags w.r.t\ their projections to $\flagspace{\dvec}$
would imply counting these flags w.r.t.\ their projections to $\shapespace{\dvec}\times\gras{\dvec,n}$,
which is the content of Theorems \ref{thm: Counitng with supremum}
and \ref{thm: counting anti canonical}. Indeed, in this section we
state a counting theorem with $\flagspace{\dvec}$, from which we
deduce Theorems \ref{thm: Counitng with supremum} and \ref{thm: counting anti canonical}.
Recall from Section \ref{sec: Flags} that the projection of a flag
of lattices $\flag\brac{\ZZ}$ to the space $\flagspace{\dvec}$ is
denoted by $\unilat{\flag\brac{\ZZ}}$. In what follows, we say that
$\pairset\subseteq\flagspace{\dvec}$ is bounded if $\proj[\flagspace{\dvec}][\shapespace{\underline{d}}]\brac{\pairset}$
is.

\begin{thm}
\label{thm: general thm}Let $\errexp_{n}=(4n^{2}/\left\lceil \left(n-1\right)/2\right\rceil )^{-1}$,
and assume that $\pairset\subseteq\flagspace{\dvec}$ is boundary
controllable. The number of positive primitive integral $\dvec$-flags
$\flag\brac{\ZZ}$ with $\unilat{\flag\brac{\ZZ}}\in\pairset$ and
$H\brac{\flag\brac{\ZZ}}\leq e^{T}$ is 
\[
2^{\leng-1}c_{\dvec,n}\cdot\vol_{\flagspace{\dvec}}^{\,1}\brac{\pairset}\cdot e^{\ind T}+\text{error term}
\]
where 
\begin{equation}
\ind=\begin{cases}
2n-d_{1}-d_{\leng} & \text{if \ensuremath{H=H_{\infty}}}\\
1 & \text{if \ensuremath{H=H_{\ac}}}
\end{cases}\label{eq: choose height}
\end{equation}
and the error term is $O_{\e}\brac{e^{\ind T\brac{1-\frac{\errexp_{n}n}{\b}+\e}}}$
for every $\e>0$, where 
\[
\b=\begin{cases}
1 & \text{if \ensuremath{\pairset} is bounded}\\
2\left(n-\leng\right)\left(n^{2}-1\right)+n^{2} & \text{if \ensuremath{\pairset} is not bounded}
\end{cases}.
\]
 
\end{thm}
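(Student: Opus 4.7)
The strategy is to recast the counting of primitive integral $\dvec$-flags as a lattice point count for $\sl n\brac{\ZZ}$ inside $\sl n\brac{\RR}$, compute the relevant volume using the refined Iwasawa decomposition of Section \ref{sec: RI coordinates}, and then apply an effective equidistribution theorem of Gorodnik--Nevo type together with a cusp estimate to the resulting set.

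The first step (Section \ref{sec: Integral matrices representing primitive vectors}) is to parameterize positive primitive integral flags. Each such flag $\flag\brac{\ZZ}$ admits a basis extending to a $\ZZ$-basis of $\ZZ^{n}$, giving a matrix $\gamma\in\sl n\brac{\ZZ}$ whose columns realize the flag; two matrices represent the same flag iff they differ on the right by an element of the arithmetic block-upper-triangular subgroup $G^{\dprime}\brac{\ZZ}\cdot N^{\prime}\brac{\ZZ}$. Using the RI decomposition $g=qa^{\prime}n^{\prime}$, the conditions $\unilat{\flag\brac{\ZZ}}\in\pairset$ and $H\brac{\flag\brac{\ZZ}}\leq e^{T}$ translate into $q\in\parby{\wc}{\pairset}$, $a^{\prime}$ in a prescribed region $\parby{A^{\prime}}{T}\subset A^{\prime}$, and $n^{\prime}$ in a fixed fundamental domain for $N^{\prime}\brac{\ZZ}\backslash N^{\prime}\brac{\RR}$. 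This defines a subset $S_{T}\subset\sl n\brac{\RR}$ such that the counting quantity equals $\#\brac{\sl n\brac{\ZZ}\cap S_{T}}$ up to a combinatorial factor accounting for the sign choices $\sgn{d_{j}}$ (cf.\ Proposition \ref{lem: lift to G''}).

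For the volume (Section \ref{sec: volumes}): in RI coordinates one checks that $\covol{\Flat{\lat_{g}}i}=e^{t_{i}}$, so $H_{\infty}\leq e^{T}$ cuts out the box $\cbrac{0\leq t_{j}\leq T}$ while $H_{\ac}\leq e^{T}$ cuts out the simplex $\cbrac{\sum_{j}\brac{d_{j}+d_{j+1}}t_{j}\leq T,\,t_{j}\geq0}$. With $d\mu_{A^{\prime}}=\prod_{j}e^{\brac{d_{j}+d_{j+1}}t_{j}}dt_{j}$ from (\ref{eq: A' measure}), direct integration -- repeated integration by parts in the $H_{\ac}$ case, which produces exactly the polynomial $\sum_{j=0}^{\leng-2}\frac{\brac{-1}^{\leng-2-j}}{j!}T^{j}$ -- yields the leading behaviour $e^{\ind T}$ in each case, using $\sum_{j=1}^{\leng-1}\brac{d_{j}+d_{j+1}}=2n-d_{1}-d_{\leng}$. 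Combining this with $\mu_{\wc}\brac{\parby{\wc}{\pairset}}$ (computed via Proposition \ref{lem: lift to G''}), the volume of the $N^{\prime}$-box, and $\mass{\vol_{\latspace n}}$ from Definition \ref{prop: spread models that we need}, all constants assemble into $c_{\dvec,n}$ as in (\ref{eq: constant flags}).

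Finally we perform the lattice point count. We split $S_{T}=S_{T}^{\text{bulk}}\cup S_{T}^{\text{cusp}}$, the bulk corresponding under $\sl n\brac{\RR}\to\sl n\brac{\RR}/\sl n\brac{\ZZ}$ to the complement of a neighbourhood of the cusp indexed by a truncation height $R$. Boundary controllability of $\pairset$ transfers (via Proposition \ref{lem: lift to G''} and Condition \ref{cond: K'}) to that of $S_{T}^{\text{bulk}}$, so the effective mean ergodic theorem for $\sl n$ gives an asymptotic for $\#\brac{\sl n\brac{\ZZ}\cap S_{T}^{\text{bulk}}}$ with the spectral-gap exponent $\errexp_{n}$ (Section \ref{sec: counting with GN}). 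The cusp part is estimated in Section \ref{sec: Counting the cusp} via reduction theory for $\sl n\brac{\ZZ}$ in RI coordinates, producing a bound that grows polynomially in $R$. The main obstacle is precisely this cusp analysis: when $\pairset$ is unbounded, a portion of $\parby{\wc}{\pairset}$ itself escapes into the Siegel cusps of the factors $\shapespace{d_{j}}$, which intrinsically worsens the cusp bound and is the source of the larger value $\b=2\brac{n-\leng}\brac{n^{2}-1}+n^{2}$. Optimizing the truncation parameter $R$ to balance the bulk error against the cusp bound produces the final exponent $\errexp_{n}n/\b$ stated in the theorem.
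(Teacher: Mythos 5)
Your proposal follows the paper's overall route: bijection between positive primitive integral flags and $\sl n(\ZZ)$-matrices in a fundamental domain for $\disgrp^{\prime}$ (Prop.~\ref{prop: prim Z flags correspond to integral matrices}); volume computation in RI coordinates with the $H_{\ac}$ polynomial arising from Lemma~\ref{lem: Florian measure} (Prop.~\ref{prop: volumes}); a Gorodnik--Nevo count on a truncated piece plus a direct cusp estimate via reduced bases; and a final balancing of the truncation parameter. So the strategy is right. However, there is a genuine gap in the error analysis.

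You describe the bulk count as giving the error $\mu(S_T^{\mathrm{bulk}})^{1-\errexp_n+\e}$ from the spectral gap and attribute the worsened exponent $\b$ in the unbounded case entirely to the size of the cusp count. That picture is incomplete, and without the missing piece the balance cannot be derived. Theorem~\ref{thm: GN Counting thm} requires the truncated family to be Lipschitz well rounded, and the error carries the factor $C_{\Gset}^{\dim G/(1+\dim G)}$. For the truncated sets $\trunc{\byparby{\funddom}T}{\Svec}\brac{\pairset}$, the Lipschitz constant coming from the RI decomposition grows like $e^{2\sumS}$ (Prop.~\ref{thm: Counting with S and W}, relying on \cite[Lem.~10.8]{HK_gcd}), so the Gorodnik--Nevo error degrades as $e^{\sumS/\lm_n}\mu(\cdot)^{1-\errexp_n+\e}$: truncating more aggressively shrinks the cusp count but inflates the bulk error. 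Note also that the cusp bound does not grow polynomially in the truncation height; relative to the main term $e^{\ind T}$ it decays like $e^{-\sigma_{\min}T}$ as the truncation parameter increases. The constant $\b$ is precisely what emerges from solving $1-\errexp_n+\dl = 1 - \dl\lm_n/(n-\leng)$, with $\dl$ controlling the linear growth $\sumS(T)\leq\dl\lm_n\ind T$; your sketch omits the left-hand term's dependence on $\dl$ and therefore cannot produce the stated exponent.
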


\begin{proof}[Proof of Theorems \ref{thm: Counitng with supremum} and \ref{thm: counting anti canonical}
assuming Theorem \ref{thm: general thm}.]
Let $\symset\subseteq\shapespace{\dvec}$ and $\sphereset\subseteq\grass{\dvec,n}$
be boundary controllable. We denote by $\tilde{\sphereset}\subseteq\gras{\dvec,n}$
the lift of $\sphereset$ to $\gras{\dvec,n}$, which is also boundary
controllable. By Proposition \ref{lem: lift to G''} the set 
\[
\pairset=\proj[\flagspace{\dvec}][\shapespace{\underline{d}}\times\gras{\dvec,n}]^{-1}(\symset\times\tilde{\sphereset})
\]
is boundary controllable and $\vol_{\flagspace{\dvec}}\brac{\pairset}=\vol_{\shapespace{\dvec}}\brac{\symset}\vol_{\gras{\dvec,n}}\brac{\tilde{\sphereset}}\prod_{i=1}^{\leng}\sgn{d_{i}}$.
In particular, for $\symset=\shapespace{\dvec}$ and $\tilde{\sphereset}=\gras{\dvec,n}$
we have that
\[
\mass{\vol_{\flagspace{\dvec}}}=\mass{\vol_{\shapespace{\dvec}}}\mass{\vol_{\gras{\dvec,n}}}\prod_{i=1}^{\leng}\sgn{d_{i}}.
\]
Then, by Theorem \ref{thm: general thm}, the number of positive primitive
integral $\dvec$-flags $\flag\brac{\ZZ}$ with $\unilat{\flag\brac{\ZZ}}\in\pairset$
and $H\brac{\flag\brac{\ZZ}}\leq e^{T}$ is 
\begin{align*}
 & \approx2^{\leng-1}c_{\dvec,n}\cdot\frac{\vol_{\flagspace{\dvec}}\brac{\pairset}}{\mass{\vol_{\flagspace{\dvec}}}}\cdot e^{\ind T}=2^{\leng-1}c_{\dvec,n}\cdot\frac{\vol_{\shapespace{\dvec}}\brac{\symset}\,\vol_{\gras{\dvec,n}}\brac{\tilde{\sphereset}}}{\mass{\vol_{\shapespace{\dvec}}}\mass{\vol_{\gras{\dvec,n}}}}\cdot e^{\ind T}\\
 & =2^{\leng-1}c_{\dvec,n}\cdot\vol_{\shapespace{\dvec}}^{1}\brac{\symset}\vol_{\gras{\dvec,n}}^{1}\brac{\tilde{\sphereset}}\cdot e^{\ind T}=c_{\dvec,n}\cdot\vol_{\shapespace{\dvec}}^{1}\brac{\symset}\vol_{\grass{\dvec,n}}^{1}\brac{\sphereset}\cdot e^{\ind T},
\end{align*}
where in the last equality we used (\ref{eq: Geassmannians vol}).
\end{proof}
The rest of this article is devoted to proving Theorem \ref{thm: general thm}.

\section{\label{sec: Integral matrices representing primitive vectors}Integral
matrices correspond to integral flags}

The goal of this section is to translate the statement of Theorem
\ref{thm: general thm} into a counting problem of integral matrices.
To do so, we establish a correspondence between integral unimodular
$\dvec$-flags and integral matrices in a fundamental domain of the
following discrete group of $\sl n\brac{\RR}$: 
\[
\disgrp^{\prime}=\left(N^{\prime}\rtimes G^{\dprime}\right)\left(\ZZ\right)=\left[\begin{smallmatrix}\sl{d_{1}}\left(\ZZ\right) & \ZZ^{d_{1}\times d_{2}} & \cdots & \ZZ^{d_{1}\times d_{\leng}}\\
0 & \ddots & \ddots & \vdots\\
\vdots & \ddots & \sl{d_{\leng-1}}\left(\ZZ\right) & \ZZ^{d_{\leng-1}\times d_{\leng}}\\
0 & \cdots & 0 & \sl{d_{\leng}}\left(\ZZ\right)
\end{smallmatrix}\right].
\]

\begin{prop}
\label{prop: prim Z flags correspond to integral matrices}There
exists a bijection $\flag\leftrightarrow\ga_{\flag}$ between integral
unimodular $\dvec$-flags and integral matrices in a fundamental domain
of $\sl n\left(\RR\right)\curvearrowleft\disgrp^{\prime}$, that sends
a unimodular $\dvec$-flag $\flag$ to $\ga_{\flag}$, the unique
integral matrix in the fundamental domain whose columns span $\flag$. 
\end{prop}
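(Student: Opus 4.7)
My plan is to chase through three identifications. First, I would show that matrices in $\sl n\brac{\ZZ}$ parametrize primitive integral positive $\dvec$-flags up to right multiplication by $\disgrp^\prime$. Second, I would check that the further passage to $\flagspace{\dvec}$, which mods out additionally by $A^\prime$, introduces no new identifications among elements of $\sl n\brac{\ZZ}$. Third, I would invoke the hypothesis that $F \subset \sl n\brac{\RR}$ is a fundamental domain for the right action of $\disgrp^\prime$ to pick unique integer representatives.

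For the first step, I would observe that any $\ga \in \sl n\brac{\ZZ}$ has columns whose successive partial spans form integer sublattices of $\ZZ^n$ that are automatically primitive (since their generators extend to a $\ZZ$-basis of $\ZZ^n$); thus $\ga$ produces a primitive integral positive $\dvec$-flag $\flag_\ga\brac{\ZZ}$. Conversely, every primitive integral positive flag $\flag\brac{\ZZ}$ admits such a representative: one picks a positively oriented $\ZZ$-basis of $\Flat{\lat}1$, then a positively oriented lift of a $\ZZ$-basis of each successive quotient $\Flat{\lat}j/\Flat{\lat}{j-1}$. The ambiguity in these choices accounts for precisely the right action of $\disgrp^\prime$: the diagonal $\sl{d_j}\brac{\ZZ}$-blocks absorb the change of basis in each quotient, and the integer off-diagonal blocks absorb the choice of lift (shifting a column by an integer combination of earlier columns). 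Conversely, right multiplication by any element of $\disgrp^\prime$ manifestly preserves the flag.

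The second step is the only real technical point: I would verify that $\Gamma_{\dvec} A^\prime \cap \sl n\brac{\ZZ} = \disgrp^\prime$, where $\Gamma_{\dvec}$ denotes the real block upper triangular group with $\sl{d_j}\brac{\ZZ}$ on the diagonal and real off-diagonal blocks, i.e.\ the stabilizer of a flag in $\flagspace{\dvec}$ viewed inside $\sl n\brac{\RR}$. Writing a putative element as $\eta \cdot a$ with $a = \diag{\beta_1 \idmat{d_1}, \ldots, \beta_{\leng} \idmat{d_{\leng}}} \in A^\prime$, integrality of the $j$-th diagonal block $\beta_j \eta_{jj}$, together with the fact that the entries of an $\sl{d_j}\brac{\ZZ}$-matrix have gcd one, forces $\beta_j \in \ZZ_{>0}$; the constraint $\prod_j \beta_j^{d_j} = 1$ then yields $\beta_j = 1$, so $a = \idmat n$ and $\eta \in \disgrp^\prime$. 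Once this is in hand, the third step is routine: since $\disgrp^\prime \subset \sl n\brac{\ZZ}$, the set $F$ meets each $\disgrp^\prime$-orbit in $\sl n\brac{\ZZ}$ in exactly one point, so the assignment $\flag \mapsto \ga_\flag$ (the unique integer matrix of $F$ representing $\flag$) is well-defined and bijective onto the integral unimodular flags. The main obstacle, accordingly, is the small rigidity computation in the second step; the rest is bookkeeping about primitive sublattices and their bases.
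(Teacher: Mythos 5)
Your proof is essentially correct and, stripped of one unnecessary step, matches the paper's argument, which is short: an integral matrix in the fundamental domain spans a primitive positive integral flag; conversely, a flag has a basis in $\sl n(\ZZ)$, and its $\disgrp^{\prime}$-orbit (which stays in $\sl n(\ZZ)$ since $\disgrp^{\prime}\subset\sl n(\ZZ)$) meets the fundamental domain in a unique point. Your step~1 (flags correspond to $\disgrp^{\prime}$-orbits in $\sl n(\ZZ)$, with the diagonal $\sl{d_j}(\ZZ)$ blocks absorbing basis changes in the quotients and the integer off-diagonal blocks absorbing choices of lift) and your step~3 (pick the unique representative in the fundamental domain) are exactly this argument, spelled out in more detail than the paper gives.

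However, your step~2 is extraneous, and you mislabel it ``the only real technical point.'' The proposition concerns a fundamental domain for the right action of the discrete group $\disgrp^{\prime}$ on $\sl n(\RR)$; it makes no reference to $A^{\prime}$ or to $\flagspace{\dvec}$. The rigidity claim $\Gamma_{\dvec}A^{\prime}\cap\sl n(\ZZ)=\disgrp^{\prime}$ is true, and your gcd-plus-determinant argument for it is correct, but it plays no role in establishing this bijection: once a $\disgrp^{\prime}$-orbit is given, its intersection with $\funddom$ is a single point by the definition of a fundamental domain, and this point is integral because the orbit of an integral matrix under $\disgrp^{\prime}\subset\sl n(\ZZ)$ consists of integral matrices. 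The issue of whether rescaling by $A^{\prime}$ identifies distinct integral matrices is relevant when one later passes from $\funddom$ to $\flagspace{\dvec}$, but it is not part of what is being asserted or proved here. In short: the real content is your step~1, and your step~3 is the standard fundamental-domain bookkeeping; step~2 should be deleted.
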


\begin{proof}
The direction $\Leftarrow$ is simple: given $\ga\in\funddom\cap\sl n\left(\ZZ\right)$,
its columns span $\ZZ^{n}$ hence by definition a $\dvec$ partition
of its columns spans a unimodular integral $\dvec$\textendash flag.
In the opposite direction, let $\base_{1}|\cdots|\base_{\leng}$ be
a basis for $\flag$. Since $\flag$ is primitive, we may assume that
$\base_{\leng}$ is also a basis for $\ZZ^{n}$; let $\ga\in\sl n\left(\ZZ\right)$
be a matrix having this basis in its columns. The orbit $\ga\cdot\disgrp$
meets $\funddom$ in a single point, $\ga_{\flag}$. 
\end{proof}
Let us construct an explicit fundamental domain for $\disgrp^{\prime}$.
 Denote 
\[
\cube=\text{the unit cube \ensuremath{\left(-1/2,1/2\right]^{\dim(N^{\prime})}}},
\]
let $\parby{N^{\prime}}{\cube}$ be the image of $\cube$ in $N^{\prime}$,
and set
\[
\funddom=K^{\prime}\parby{G^{\dprime}}{\groupfund{d_{1}}\times\cdots\times\groupfund{d_{\leng}}}A^{\prime}\parby{N^{\prime}}{\cube}\,.
\]
It is easy to see  that $\funddom$ is a fundamental domain for the
right action of $\disgrp^{\prime}$ on $\sl n\left(\RR\right)$. 

Proposition \ref{prop: prim Z flags correspond to integral matrices}
confirms that every unimodular integral flag is represented by a unique
integral matrix in $\funddom$; the next step is to verify how the
different properties of the flag \textemdash{} e.g.\ its shape, or
the covolumes of the lattices that compose it \textemdash{} are exhibited
in the Refined Iwasawa components of the matrix that spans the flag. 

For $g\in\gl n(\RR)$, recall the notation for the primitive $\dvec$-flag
of lattices spanned by the columns of $g$:
\[
\flag_{g}\brac{\ZZ}=\:(\,\Flat{\lat_{g}}1<\Flat{\lat_{g}}2<\cdots<\Flat{\lat_{g}}{\leng}=\lat_{g}<\RR^{n}\,).
\]
By setting 
\[
D_{j}=\sum_{i=1}^{j}d_{i},
\]
we have that each $\Flat{\lat_{g}}j$ is the $\ZZ$-span of the first
$D_{j}$ columns of $g$. For $D_{j-1}+1\leq i\leq D_{j}$, we let
\[
\brac{\Flat{\lat_{g}}j}^{i}
\]
be the subgroup of $\Flat{\lat_{g}}j$ spanned by the columns $\sbrac{D_{j-1}+1,i}$
of $g$. The consecutive quotients are
\[
\qlat_{j}(g)=\Flat{\lat_{g}}j/\Flat{\lat_{g}}{j-1},
\]
that is, each $\qlat_{j}(g)$ is spanned by the cosets of $\Flat{\lat_{g}}{j-1}$
that are generated by the columns $\sbrac{D_{j-1}+1,D_{j}}$ of $g$.
Let 
\[
\qlat_{j}\brac g^{i}
\]
be the subgroup of $\qlat_{j}\brac g$ spanned by the cosets corresponding
to columns $\sbrac{D_{j-1}+1,i}$.
\begin{prop}
\label{prop: explicit RI coordinates of g}Assume $g\in\sl n\left(\RR\right)$
is written in Refined Iwasawa coordinates as 
\[
g=kan=qa^{\prime}n^{\prime}
\]
where \setlength{\abovedisplayskip}{0pt} 
\setlength{\belowdisplayskip}{5pt}
\begin{align*}
a^{\prime} & =a_{t_{1},\ldots,t_{\leng-1}}^{\prime}\\
q & =k^{\prime}g^{\dprime}=k^{\prime}k^{\dprime}p^{\dprime}a_{\Flat{\underline{s}}1,\ldots,\Flat{\underline{s}}{\leng}}^{\dprime}n^{\dprime}.
\end{align*}
for every $j=1,\ldots,\leng$, let $g^{\dprime}=\diag{g_{d_{1}},\ldots,g_{d_{\leng}}}$,
$g_{d_{j}}^{\dprime}=\diag{I_{d_{1}},\ldots,g_{d_{j}},\ldots,\idmat{d_{\leng}}}$,
and similarly for $p^{\dprime}$. The Refined Iwasawa components of
$g$ represent parameters related to $\flag_{g}\brac{\ZZ}$ in the
following way:
\[
\begin{array}{ccccc}
(i) & \flag_{k^{\prime}} & = & \flag_{g}\\
(ii) & e^{t_{j}} & = & \covol{\Flat{\lat_{g}}j}\\
(ii)^{\prime} & e^{t_{j}-t_{j-1}} & = & \covol{\qlat_{j}\brac g} & (t_{0}=t_{\leng}=0)\\
(iii) & e^{\frac{i\brac{t_{j}-t_{j-1}}}{d_{j}}-\frac{\Flat{s_{i}}j}{2}} & = & \covol{\qlat_{j}\brac g^{i}}\\
(iii)^{\prime} & e^{\frac{\hat{i}t_{j}+(d_{j}-\hat{i})t_{j-1}}{d_{j}}-\frac{\Flat{s_{\hat{i}}}j}{2}} & = & \covol{\Flat{\lat}j{}^{i}} & \left(\substack{D_{j-1}+1\leq i\leq D_{j}\\
\hat{i}=i-d_{j-1}
}
\right)\\
(iv) & \unilat{\flag_{q}\brac{\ZZ}} & = & \unilat{\flag_{g}(\ZZ)}\\
(v) & \ensuremath{\shape{\qlat_{j}}} & = & \shape{\lat_{p_{d_{j}}}}
\end{array}
\]
\end{prop}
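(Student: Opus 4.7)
The plan is to leverage the block upper triangular structure built into the refined Iwasawa decomposition. Writing $k=k'k''\in\so{n}(\RR)$, we have $g=k\cdot(p''a'n')$ where $p''a'n'$ is block upper triangular with respect to the partition $(d_1,\ldots,d_\ell)$: $n'$ has identity diagonal blocks, $a'$ is scalar on each block, and $p''$ is block diagonal. In particular, the $j$-th diagonal block of $p''a'n'$ is $e^{(t_j-t_{j-1})/d_j}\,p_{d_j}$, and the first $D_j$ columns vanish in rows past $D_j$. This single observation drives every claim.

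Parts (i), (iv), (v) are essentially formal. For (i), the first $D_j$ columns of $g$, $q$ and $k'$ span the same subspace, since the intermediate right factors are block upper triangular acting as the identity (or as scalars) on the first $j$ diagonal blocks. For (iv), $g=q\cdot(a'n')$ with $a'n'\in N'A'$, so $g$ and $q$ project to the same point of $\flagspace{\dvec}=\sl{n}(\RR)/G''(\ZZ)\,N'A'$. For (v), the orthogonal projection of the columns of $g$ indexed by block $j$ onto the orthogonal complement of $V_{\Flat{\lat_g}{j-1}}$ amounts, in the $k$-frame, to reading off the $j$-th diagonal block of $p''a'n'$; hence $\qlat_j(g)$ is isometric to $e^{(t_j-t_{j-1})/d_j}\cdot\lat_{p_{d_j}}$ up to a rotation, and in particular has the same shape.

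The substantive step is the covolume identity (iii)$'$, from which (ii), (ii)$'$ and (iii) all follow. For $D_{j-1}+1\le i\le D_j$, since $k$ preserves covolume, $\covol{\brac{\Flat{\lat_g}{j}}^{i}}$ equals the volume of the parallelepiped spanned by the first $i$ columns of $p''a'n'$, which are supported on the top $D_j$ rows. The resulting $D_j\times i$ matrix is itself block upper triangular with square invertible diagonal blocks $e^{(t_\kappa-t_{\kappa-1})/d_\kappa}p_{d_\kappa}$ for $\kappa<j$ and a rectangular last block equal to $e^{(t_j-t_{j-1})/d_j}$ times the first $\hat i$ columns of $p_{d_j}$. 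A Schur-complement identity factors the corresponding Gram determinant as $\prod_{\kappa<j}|\det|^2$ of the square blocks times the Gram determinant of the last block. After taking square roots, the square-block factors contribute $\prod_{\kappa<j}e^{t_\kappa-t_{\kappa-1}}=e^{t_{j-1}}$ (since each $\det p_{d_\kappa}=1$), while the last-block factor is $e^{\hat i(t_j-t_{j-1})/d_j}$ times the covolume of the lattice spanned by the first $\hat i$ columns of $p_{d_j}=a_{\Flat{\underline{s}}{j}}n_{d_j}$; the latter, since $p_{d_j}$ is upper triangular with diagonal $a_{\Flat{\underline{s}}{j}}$, is the product of the first $\hat i$ diagonal entries of $a_{\Flat{\underline{s}}{j}}$ and telescopes under the $\Flat{\underline{s}}{j}$-parameterization of $A_{d_j}$ to $e^{-\Flat{s_{\hat i}}{j}/2}$. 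Multiplying these factors yields (iii)$'$; specializing to $\hat i=d_j$ (with $\Flat{s_{d_j}}{j}=0$) gives (ii), and dividing by $\covol{\Flat{\lat_g}{j-1}}=e^{t_{j-1}}$ produces (ii)$'$ and (iii).

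The main obstacle is notational rather than conceptual: the three nested indexings (block $j$, column $i$, in-block $\hat i$), the interleaved group factors, and the two $A$-parameterizations have to be threaded consistently through the Schur-complement step. Once the block upper triangular form of $p''a'n'$ is pinned down, each identity unfolds mechanically from standard determinant manipulations.
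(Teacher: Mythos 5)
Your proof is correct and follows essentially the same route as the paper's: read off the covolume data from the diagonal part of the (refined) Iwasawa decomposition, and obtain (i), (iv), (v) as formal consequences of the block-triangular structure. The one small difference is in how the covolume identities are organized: the paper proves $(iii)$ and $(ii)'$ first, exploiting the realization of $\qlat_j(g)$ as a factor lattice (orthogonal projection of $\Flat{\lat_g}{j}$ onto $\Flat{V_g}{j-1}^{\perp}$), which lets right multiplication by $n'$ and block-diagonal rotations drop out for free, and then derives $(ii)$ and $(iii)'$ from the multiplicativity $\covol{\lat_2}=\covol{\lat_1}\covol{\lat_2/\lat_1}$; you instead prove $(iii)'$ directly via a Schur-complement factorization of the Gram determinant of a block upper-triangular rectangular matrix, and then divide by $e^{t_{j-1}}$ to recover $(ii)'$ and $(iii)$. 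Both computations land on the same telescoping of the $A$-parameterization, $\prod_{k\le\hat{i}}a_k = e^{-s^{(j)}_{\hat{i}}/2}$, so the content is the same; your Schur-complement step is a correct and slightly more explicit substitute for the paper's factor-lattice observation.
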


\begin{proof}
Since the columns of $k$ are obtained by performing the Gram-Schmidt
orthogonalization procedure on the columns of $g$, we have for every
$1\leq d\leq n$ that the first $d$ columns of $k$ span the same
space as the first $d$ columns of $g$. Hence $\flag_{g}=\flag_{k}$.
Since $k^{\prime}=k\left(k^{\dprime}\right)^{-1}$, where $k^{\dprime}\in G^{\dprime}$
fixes $\flag_{g}$, we deduce that $(i)$ indeed holds.  

Now write $g\brac{n^{\prime}}^{-1}\brac{a^{\prime}}^{-1}=q$; by definition,
right multiplication by an element of $N^{\prime}A^{\prime}$ does
not change the projection to $\flagspace{\dvec}$. This observation
immediately proves part $(iv)$. As the shape of each $g_{d_{j}}^{\dprime}$
is the same as that of $p_{d_{j}}^{\dprime}$, part $(v)$ follows
from part $(iv)$. 

Notice that since $g^{\dprime}$ is in $G^{\dprime}$, the lattice
flag $\flag_{g^{\dprime}}\brac{\ZZ}$ is unimodular. Furthermore,
as $q$ is a rotation of $g^{\dprime}$, the same holds for $\flag_{q}\brac{\ZZ}$.
 As a result, using the fact that right multiplication by $n^{\prime}$
does not change the flag $\flag_{q}\brac{\ZZ}$, we have\setlength{\abovedisplayskip}{0pt} 
\setlength{\belowdisplayskip}{6pt}
\[
\covol{\qlat_{j}^{i}(g)}=\covol{\qlat_{j}^{i}(qa^{\prime})}=\covol{\qlat_{j}^{i}(a^{\prime})}=\covol{(a^{\prime})^{D_{j}+i}}=e^{\frac{i\left(t_{j}-t_{j-1}\right)}{d_{j}}-\frac{\Flat{s_{i}}j}{2}}.
\]
This proves parts $(iii)$ and $(ii)^{\prime}$.

Notice that if $\lat_{1}<\lat_{2}$ are lattices, then
\[
\covol{\lat_{2}}=\covol{\lat_{1}}\covol{\lat_{2}/\lat_{1}}.
\]
As a result, parts $(ii)$ and $(iii)^{\prime}$ follow from parts
$(iii)$ and $(ii)^{\prime}$.
\end{proof}
\begin{rem}
Proposition \ref{prop: explicit RI coordinates of g} above explicates
the projections from $\flagspace{\dvec}$ to $\shapespace{\dvec}$
and to $\gras{\dvec,n}$ that appear in Proposition \ref{lem: lift to G''}:
the projection of $\flag_{g}\brac{\ZZ}$ to $\flagspace{\dvec}$ is
$q=k^{\prime}k^{\dprime}p^{\dprime}$, while $p^{\dprime}$ represents
the projection to $\shapespace{\dvec}$, and $k^{\prime}$ represents
the projection  to $\gras{\dvec,n}$.
\end{rem}

We can now define subsets of $\funddom$ that capture only the integral
matrices corresponding to unimodular lattice flags with certain shape,
direction, and height. For $T>0$ denote: 
\begin{align*}
\parby{A^{\prime}}T & :=\{a_{t_{1},\ldots,t_{\leng-1}}^{\prime}:\log\left(H_{\infty}\brac{t_{1},\ldots,t_{\leng-1}}\right)\leq T\}\\
 & =\{a_{t_{1},\ldots,t_{\leng-1}}^{\prime}:0\leq t_{i}\leq T\;\forall i=1,\ldots,\leng-1
\end{align*}
and
\begin{align*}
\bypar{A^{\prime}}T & :=\{a_{t_{1},\ldots,t_{\leng-1}}^{\prime}:\log\left(H_{\ac}\brac{t_{1},\ldots,t_{\leng-1}}\right)\leq T\}\\
 & =\{a_{t_{1},\ldots,t_{\leng-1}}^{\prime}:\sum_{i=1}^{\leng-1}t_{i}\brac{d_{i}+d_{i+1}}\leq T\}.
\end{align*}
Also, let
\[
\byparby{A^{\prime}}T=\begin{cases}
\parby{A^{\prime}}T & \text{if \ensuremath{H=H_{\infty}}}\\
\bypar{A^{\prime}}T & \text{if \ensuremath{H=H_{\ac}}}
\end{cases}.
\]

\begin{notation}
\label{nota: sets we count in}For $T>0$ and $\pairset\subseteq\flagspace{\dvec,n}$,
consider 
\[
\funddom_{T}\brac{\pairset}=\,\funddom\,\cap\,\left\{ g=qa^{\prime}n^{\prime}:q\in\parby{\wc}{\pairset},a^{\prime}\in\parby{A^{\prime}}T\right\} =\parby{\wc}{\pairset}\parby{A^{\prime}}T\parby{N^{\prime}}{\cube}.
\]
Similarly, denote 
\[
\bypar{\funddom}T\brac{\pairset}=\,\funddom\,\cap\,\left\{ g=qa^{\prime}n^{\prime}:q\in\parby{\wc}{\pairset},a^{\prime}\in\bypar{A^{\prime}}T\right\} =\parby{\wc}{\pairset}\:\bypar{A^{\prime}}T\parby{N^{\prime}}{\cube}
\]
for the analogous set where $\parby{A^{\prime}}T$ is replaced by
$\bypar{A^{\prime}}T$. Finally,
\[
\byparby{\funddom}T\brac{\pairset}=\parby{\wc}{\pairset}\:\byparby{A^{\prime}}T\parby{N^{\prime}}{\cube}=\begin{cases}
\funddom_{T}\brac{\pairset} & \text{if \ensuremath{H=H_{\infty}}}\\
\bypar{\funddom}T\brac{\pairset} & \text{if \ensuremath{H=H_{\ac}}}
\end{cases}.
\]
\end{notation}

The following is immediate from Propositions \ref{prop: prim Z flags correspond to integral matrices}
and \ref{prop: explicit RI coordinates of g}: 
\begin{cor}
\label{cor: the sets we should count in}Consider the correspondence
$\flag\leftrightarrow\ga_{\flag}$ between integral umimodular $\dvec$\textendash flags
and matrices in $\funddom\cap\sl n\left(\ZZ\right)$, and let $T>0$.
Then 
\begin{eqnarray*}
\funddom_{T}\brac{\pairset}\cap\sl n\left(\ZZ\right) & = & \left\{ \ga_{\flag}:H_{\infty}\brac{\flag}\leq e^{T},\unilat{\flag\brac{\ZZ}}\in\pairset\right\} ,\\
\bypar{\funddom}T\brac{\pairset}\cap\sl n\left(\ZZ\right) & = & \left\{ \ga_{\flag}:H_{\ac}\brac{\flag}\leq e^{T},\unilat{\flag\brac{\ZZ}}\in\pairset\right\} .
\end{eqnarray*}
\end{cor}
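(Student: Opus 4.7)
The plan is to deduce the corollary directly from Propositions \ref{prop: prim Z flags correspond to integral matrices} and \ref{prop: explicit RI coordinates of g}, translating the defining constraints of the sets $\funddom_T(\pairset)$ and $\bypar{\funddom}T(\pairset)$ into intrinsic properties of $\flag$.

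First, Proposition \ref{prop: prim Z flags correspond to integral matrices} provides the bijection between integral unimodular $\dvec$-flags and integer points of $\funddom$, so any $\ga \in \funddom \cap \sl n(\ZZ)$ equals $\ga_\flag$ for a unique such $\flag$. Writing $\ga_\flag = q a^\prime n^\prime$ in Refined Iwasawa coordinates with $a^\prime = a^\prime_{t_1,\ldots,t_{\leng-1}}$, the requirement $n^\prime \in \parby{N^\prime}{\cube}$ is automatic from $\ga_\flag \in \funddom$, so only the $q$- and $a^\prime$-components need to be matched against the extra constraints in the definitions of $\funddom_T(\pairset)$ and $\bypar{\funddom}T(\pairset)$.

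Next I would invoke Proposition \ref{prop: explicit RI coordinates of g}: part $(iv)$ identifies $q$ with the image of $\flag(\ZZ)$ in $\flagspace{\dvec}$, so $q \in \parby{\wc}{\pairset}$ is equivalent to $\unilat{\flag(\ZZ)} \in \pairset$; part $(ii)$ gives $\covol{\Flat{\lat_g}j} = e^{t_j}$ for $1 \leq j \leq \leng-1$, while the extremes $\Flat{\lat_g}0 = \{0\}$ and $\Flat{\lat_g}\leng = \ZZ^n$ contribute trivially to both height functions. Substituting into the definitions of $H_\infty$ and $H_{ac}$ yields
\[
\log H_\infty(\flag) = \max_{1 \leq j \leq \leng-1} t_j, \qquad \log H_{ac}(\flag) = \sum_{j=1}^{\leng-1}(d_j + d_{j+1})\, t_j,
\]
so the condition $H(\flag) \leq e^T$ reduces exactly to $a^\prime \in \byparby{A^\prime}T$, completing the proof in both cases.

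The argument is essentially bookkeeping, and I do not anticipate any serious obstacle; the only points deserving a moment of care are the automatic $n^\prime$-constraint imposed by $\funddom$ and the convention that the flag extremes contribute trivially to both height products, so that both $\log H_\infty$ and $\log H_{ac}$ depend only on the intermediate parameters $t_1,\ldots,t_{\leng-1}$ that parametrize $A^\prime$.
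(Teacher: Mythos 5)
Your argument is correct and coincides with the paper's, which simply declares the corollary "immediate from Propositions \ref{prop: prim Z flags correspond to integral matrices} and \ref{prop: explicit RI coordinates of g}": the bijection identifies $\ga\in\funddom\cap\sl n(\ZZ)$ with a unique flag $\flag$, the $n'$-component lies in $\parby{N^\prime}{\cube}$ automatically, part $(iv)$ converts $q\in\parby{\wc}{\pairset}$ into $\unilat{\flag(\ZZ)}\in\pairset$, and part $(ii)$ converts the constraint on $a'$ into the height bound. The one point you mention only implicitly (via "flag extremes contribute trivially") is that the sets $\parby{A^\prime}T$ and $\bypar{A^\prime}T$ also impose $t_j\geq0$; for $\ga_\flag$ integral this is automatic since each $\Flat{\lat}j$ is a primitive sublattice of $\ZZ^n$ and hence has $\covol{\Flat{\lat}j}=e^{t_j}\geq1$.
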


\section{\label{sec: volumes}Some volume computations}

The goal of this section is to compute the volumes of the sets $\funddom_{T}\brac{\pairset}$
and $\bypar{\funddom}T\brac{\pairset}$, introduced in Notation \ref{nota: sets we count in}.
From now on, we will abbreviate and let $\mu=\mu_{G_{n}}$. 
\begin{prop}
\label{prop: volumes}
\[
\mu\brac{\funddom_{T}\brac{\pairset}}=\frac{\vol_{\flagspace{\dvec}}\brac{\pairset}}{\prod_{j=1}^{\leng-1}\brac{d_{j}+d_{j+1}}}\cdot e^{T\brac{2n-d_{1}-d_{\leng}}}+O(e^{T\left(2n-d_{1}-d_{\leng}-2\right)}).
\]
 and 
\[
\mu\brac{\bypar{\funddom}T\brac{\pairset}}=\frac{\vol_{\flagspace{\dvec}}\brac{\pairset}}{\prod_{j=1}^{\leng-1}\brac{d_{j}+d_{j+1}}}\cdot e^{T}\sum_{i=0}^{\leng-2}(-1)^{\leng-2-i}\cdot\frac{T^{i}}{i!}+O(1).
\]
\end{prop}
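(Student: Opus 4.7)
The plan is to exploit the Refined Iwasawa decomposition of the Haar measure $\mu$ on $G_{n}$ established in Section~\ref{subsec: RI Haar measure}: by~(\ref{eq: Haar measure on G}) we have $\mu = \mu_{\wc} \times \mu_{A^{\prime}} \times \mu_{N^{\prime}}$, while by Notation~\ref{nota: sets we count in} both $\funddom_{T}\brac{\pairset}$ and $\bypar{\funddom}{T}\brac{\pairset}$ are products of subsets of $\wc$, $A^{\prime}$ and $N^{\prime}$. Consequently their $\mu$-measures factor as $\mu_{\wc}(\parby{\wc}{\pairset}) \cdot \mu_{A^{\prime}}(X_{T}) \cdot \mu_{N^{\prime}}(\parby{N^{\prime}}{\cube})$, where $X_{T}$ denotes $\parby{A^{\prime}}{T}$ or $\bypar{A^{\prime}}{T}$ respectively. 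The $\wc$-factor equals $\vol_{\flagspace{\dvec}}(\pairset)$ directly from Definition~\ref{prop: spread models that we need}, since $\parby{\wc}{\pairset}$ is by construction the representative of $\pairset$ inside the fundamental domain $K^{\prime}\prod_{j=1}^{\leng}\groupfund{d_{j}}$ on which $\vol_{\flagspace{\dvec}}$ is defined as the restriction of $\mu_{\wc}$. The $N^{\prime}$-factor equals $1$ as $\mu_{N^{\prime}}$ is the pullback of Lebesgue measure on the unit cube. The task reduces to computing $\mu_{A^{\prime}}(X_{T})$ using the density $d\mu_{A^{\prime}} = \prod_{j=1}^{\leng-1} e^{(d_{j}+d_{j+1})t_{j}}\,dt_{j}$ from~(\ref{eq: A' measure}).

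For the first formula, $\parby{A^{\prime}}{T}$ is the cube $[0,T]^{\leng-1}$ and the integral separates into
\[
\mu_{A^{\prime}}(\parby{A^{\prime}}{T}) \;=\; \prod_{j=1}^{\leng-1}\int_{0}^{T}e^{(d_{j}+d_{j+1})t_{j}}\,dt_{j} \;=\; \prod_{j=1}^{\leng-1}\frac{e^{(d_{j}+d_{j+1})T}-1}{d_{j}+d_{j+1}}.
\]
Expanding the numerator as a sum over subsets of $\{1,\ldots,\leng-1\}$, the leading term has exponent $\sum_{j}(d_{j}+d_{j+1}) = 2n-d_{1}-d_{\leng}$, while every other term drops the exponent by at least $d_{j}+d_{j+1}\ge 2$. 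Dividing by $\prod_{j}(d_{j}+d_{j+1})$ and multiplying by the bounded quantity $\vol_{\flagspace{\dvec}}(\pairset)\le\mass{\vol_{\flagspace{\dvec}}}$ (which is finite by Definition~\ref{prop: spread models that we need}) yields the claimed main term and the error $O(e^{T(2n-d_{1}-d_{\leng}-2)})$.

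For the second formula, the substitution $u_{j} = (d_{j}+d_{j+1})t_{j}$ transforms $\bypar{A^{\prime}}{T}$ into the standard simplex $\{u_{j}\ge 0:\sum_{j}u_{j}\le T\}$, produces the Jacobian $1/\prod_{j=1}^{\leng-1}(d_{j}+d_{j+1})$, and turns $d\mu_{A^{\prime}}$ into $e^{\sum_{j}u_{j}}\prod_{j}du_{j}$. Slicing the simplex along $v=\sum_{j}u_{j}$, whose fiber is a $(\leng-2)$-simplex of volume $v^{\leng-2}/(\leng-2)!$, reduces the computation to
\[
\mu_{A^{\prime}}(\bypar{A^{\prime}}{T}) \;=\; \frac{1}{\prod_{j=1}^{\leng-1}(d_{j}+d_{j+1})}\int_{0}^{T}e^{v}\,\frac{v^{\leng-2}}{(\leng-2)!}\,dv.
\]
Iterated integration by parts evaluates the single integral to $e^{T}\sum_{i=0}^{\leng-2}(-1)^{\leng-2-i}T^{i}/i! + (-1)^{\leng-1}$, and after multiplying by $\vol_{\flagspace{\dvec}}(\pairset)$ the constant term is absorbed into the $O(1)$ error, establishing the second claim.

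The computation is essentially bookkeeping, with no genuine conceptual difficulty beyond the correct identification of the $\wc$-factor with $\vol_{\flagspace{\dvec}}(\pairset)$ through Definition~\ref{prop: spread models that we need}. The only mild obstacle is verifying the closed form of $\int_{0}^{T}e^{v}v^{\leng-2}/(\leng-2)!\,dv$, which follows routinely by induction on $\leng$ or by differentiating the alleged antiderivative term by term.
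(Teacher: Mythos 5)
Your proposal is correct and follows the same route as the paper: factor $\mu$ into $\mu_{\wc}\times\mu_{A'}\times\mu_{N'}$, identify the $\wc$- and $N'$-factors as $\vol_{\flagspace{\dvec}}(\pairset)$ and $1$, and reduce to evaluating $\mu_{A'}$ over a cube (for $\parby{A'}T$) or a simplex after the substitution $u_j=(d_j+d_{j+1})t_j$ (for $\bypar{A'}T$). Your slicing of the simplex along $v=\sum u_j$ plus integration by parts is a mild repackaging of the paper's Lemma~\ref{lem: Florian measure}, which evaluates the identical simplex integral by a one-step recursion $f_m(T)=\int_0^T f_{m-1}(T-s)e^s\,ds$; the two computations produce the same closed form $e^T\sum_{i=0}^{\leng-2}(-1)^{\leng-2-i}T^i/i!+(-1)^{\leng-1}$.
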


For the proof, consider the following computational lemma. 
\begin{lem}
\label{lem: Florian measure}Let 
\[
f_{m}(T)=\int_{0}^{T}e^{x_{m}}\int_{0}^{T-x_{m}}e^{x_{m-1}}\cdots\int_{0}^{T-x_{2}-\cdots-x_{m}}e^{x_{1}}dx_{1}\cdots dx_{m}.
\]
Then 
\[
f_{m}(T)=e^{T}\cdot\sum_{i=0}^{m-1}(-1)^{m-i-1}\frac{T^{i}}{i!}+(-1)^{m}.
\]
\end{lem}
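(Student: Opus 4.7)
The plan is to prove the formula by induction on $m$, exploiting the obvious recursive structure
\[
f_m(T) \;=\; \int_0^T e^{x_m}\, f_{m-1}(T-x_m)\,dx_m,
\]
which follows by peeling off the outermost integral.

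For the base case $m=1$, direct integration gives $f_1(T)=e^T-1$, which matches the claimed formula since the sum collapses to the single term with $i=0$, contributing $e^T$, and the trailing constant is $(-1)^1=-1$.

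For the inductive step, I would substitute the inductive hypothesis
\[
f_{m-1}(T-x_m) \;=\; e^{T-x_m}\sum_{i=0}^{m-2}(-1)^{m-i-2}\,\frac{(T-x_m)^i}{i!}+(-1)^{m-1}
\]
into the recursion. The factor $e^{x_m}$ cancels $e^{-x_m}$ in the exponential part, pulling $e^T$ outside the sum and leaving integrals of the form $\int_0^T \frac{(T-x_m)^i}{i!}\,dx_m = \frac{T^{i+1}}{(i+1)!}$ (by the substitution $u=T-x_m$). The constant term $(-1)^{m-1}$ produces the integral $\int_0^T e^{x_m}\,dx_m = e^T-1$.

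After collecting, we obtain
\[
f_m(T) \;=\; e^T\sum_{i=0}^{m-2}(-1)^{m-i-2}\frac{T^{i+1}}{(i+1)!} \;+\; (-1)^{m-1}(e^T-1).
\]
Reindexing the sum via $j=i+1$ turns the first piece into $e^T\sum_{j=1}^{m-1}(-1)^{m-j-1}\frac{T^j}{j!}$, and the $(-1)^{m-1}e^T$ from the second piece supplies exactly the missing $j=0$ term of the sum; the leftover $(-1)^m$ gives the trailing constant. This matches the claimed closed form.

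There is no serious obstacle here; the only care needed is bookkeeping the sign $(-1)^{m-i-1}$ and the index shift when merging the two pieces, which I have verified above. The identity $\int_0^T \frac{(T-x)^i}{i!}dx = \frac{T^{i+1}}{(i+1)!}$ is the only computational ingredient beyond the induction.
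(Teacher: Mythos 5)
Your proof is correct and takes essentially the same approach as the paper: both proceed by induction on $m$ via the recursion $f_m(T)=\int_0^T e^{x_m}f_{m-1}(T-x_m)\,dx_m$, with the same bookkeeping of the reindexing $j=i+1$ and the merging of the $(-1)^{m-1}e^T$ constant into the $j=0$ term.
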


\begin{proof}
Notice that for $m>1$
\[
f_{m}(T)=\int_{0}^{T}f_{m-1}(T-s_{m})e^{s_{m}}ds_{m},
\]
and so we will prove the claim by induction on $m$. When $m=1$ we
have that:
\[
f_{1}(T)=\int_{0}^{T}e^{x_{1}}dx_{1}=e^{T}-1=e^{T}\sum_{i=0}^{0}(-1)^{-i}\frac{T^{i}}{i!}+(-1)^{1}.
\]
Assume that the claim holds for some $m-1\geq1$. We have:
\begin{align*}
f_{m}(T) & =\int_{0}^{T}f_{m-1}(x_{m}-T)e^{x_{m}}dx_{m}\\
 & =\int_{0}^{T}\left(e^{T}\cdot\sum_{i=0}^{m-2}(-1)^{m-i-2}\frac{(T-x_{m})^{i}}{i!}+(-1)^{m-1}e^{x_{m}}\right)dx_{m}\\
 & =e^{T}\cdot\sum_{i=0}^{m-2}(-1)^{m-i-1}\frac{(T-x_{m})^{i+1}}{(i+1)!}+(-1)^{m-1}e^{x_{m}}|_{x_{m}=0}^{x_{m}=T}\\
 & =e^{T}\cdot\sum_{i=0}^{m-2}(-1)^{m-(i+1)-1}\frac{T^{i+1}}{(i+1)!}+(-1)^{m-1}e^{T}+(-1)^{m}.
\end{align*}
Set $j=i+1$ and then
\begin{align*}
 & =e^{T}\cdot\sum_{j=1}^{m-1}(-1)^{m-j-1}\frac{T^{j}}{j!}+(-1)^{m-1}e^{T}+(-1)^{m}\\
 & =e^{T}\cdot\sum_{j=0}^{m-1}(-1)^{m-j-1}\frac{T^{j}}{j!}+(-1)^{m}.\qedhere
\end{align*}
\end{proof}
\begin{proof}[Proof of Proposition \ref{prop: volumes}]
By definition, $\byparby{\funddom}T\brac{\pairset}=\parby{\wc}{\pairset}\:\byparby{A^{\prime}}T\parby{N^{\prime}}{\cube}$.
From (\ref{eq: Haar measure on G}), 
\begin{eqnarray*}
\mu\brac{\byparby{\funddom}T\brac{\pairset}} & = & \mu_{Q}\brac{\parby{\wc}{\pairset}}\mu_{A^{\prime}}\brac{\byparby{A^{\prime}}T}\mu_{N^{\prime}}\brac{\parby{N^{\prime}}{\cube}},
\end{eqnarray*}
where by Proposition \ref{prop: spread models that we need} and the
definition of $\mu_{N^{\prime}}$,
\begin{eqnarray*}
\mu_{Q}\brac{\parby{\wc}{\pairset}} & = & \vol_{\flagspace{\dvec}}\brac{\pairset}\\
\mu_{N^{\prime}}\brac{\parby{N^{\prime}}{\cube}} & = & \leb{\cube}=1\,.
\end{eqnarray*}
It is therefore left to compute the $\mu_{A^{\prime}}$-volumes of
$\parby{A^{\prime}}T$ and $\bypar{A^{\prime}}T$. Referring to (\ref{eq: A' measure}),
\begin{align*}
\mu_{A^{\prime}}\brac{\parby{A^{\prime}}T} & =\prod_{j=1}^{\leng-1}\int_{0}^{T}e^{\left(d_{j}+d_{j+1}\right)t_{j}}dt_{j}=\prod_{j=1}^{\leng-1}\frac{e^{\left(d_{j}+d_{j+1}\right)T}-1}{d_{j}+d_{j+1}}\\
 & =\frac{e^{T\cdot\sum_{j=1}^{\leng-1}\left(d_{j}+d_{j+1}\right)}}{\prod_{j=1}^{\leng-1}\brac{d_{j}+d_{j+1}}}+O(e^{T\cdot\brac{\sum_{j=1}^{\leng-1}\left(d_{j}+d_{j+1}\right)-\min_{j}\cbrac{d_{j}+d_{j+1}}}}).
\end{align*}
Note that 
\[
\sum_{j=1}^{\leng-1}\left(d_{j}+d_{j+1}\right)=\sum_{j=1}^{\leng-1}d_{j}+\sum_{j=2}^{\leng}d_{j}=\brac{n-d_{\leng}}+\brac{n-d_{1}}
\]
(since $d_{1}+\cdots+d_{\leng}=n$). Therefore 
\[
\mu_{A^{\prime}}(A_{T}^{\prime})=\prod_{j=1}^{\leng-1}\frac{1}{d_{j}+d_{j+1}}e^{T\cdot\left(2n-d_{1}-d_{\leng}\right)}+O(e^{T\cdot\left(2n-d_{1}-d_{\leng}-2\right)}).
\]
Moving on to $\bypar{A^{\prime}}T$ and applying Lemma \ref{lem: Florian measure},
\begin{align*}
\mu_{A^{\prime}}\brac{\bypar{A^{\prime}}T} & =\frac{f_{\leng-1}(T)}{\prod_{j=1}^{\leng-1}\brac{d_{j}+d_{j+1}}}f_{\leng-1}(T)=\\
 & =\frac{e^{T}}{\prod_{j=1}^{\leng-1}\brac{d_{j}+d_{j+1}}}\cdot\sum_{i=0}^{\leng-2}(-1)^{\leng-2-i}\frac{T^{i}}{i!}+O(1).\qedhere
\end{align*}
\end{proof}

\section{\label{sec: counting with GN}Counting lattice points}

As Corollary \ref{cor: the sets we should count in} suggests, Theorem
\ref{thm: general thm} is proved by counting $\sl n\left(\ZZ\right)$
matrices inside the sets $\funddom_{T}\brac{\pairset}$ and $\bypar{\funddom}T\brac{\pairset}$.
More precisely, this theorem follows from a counting lattice points
statement of the form
\[
\#\brac{\byparby{\funddom}T\brac{\pairset}\cap\sl n\left(\ZZ\right)}=\mu\brac{\sl n\brac{\RR}/\sl n(\ZZ)}^{-1}\cdot\mu\brac{\byparby{\funddom}T\brac{\pairset}}+O\brac{\mu\brac{\byparby{\funddom}T\brac{\pairset}}}^{\kappa}
\]
for some $0<\kappa<1$. However, the sets $\byparby{\funddom}T\brac{\pairset}$
are not compact when $\pairset$ is not compact (which is equivalent
to the fact that the projection of $\pairset$ to $\shapespace{\dvec}$
is not compact), even though they have finite volume and contain a
finite amount of lattice points. The way to address this problem is
by splitting each set $\byparby{\funddom}T\brac{\pairset}$ into a
compact subset and an ``infinite tail''. In the present section
we define a family of compact subsets of $\byparby{\funddom}T\brac{\pairset}$,
and apply a known ergodic method \cite{GN1} to count lattice points
in this family. In Section \ref{sec: Counting the cusp}, we apply
direct counting to bound the amount of points in the ``infinite tail''. 

The family of compact subsets of $\byparby{\funddom}T\brac{\pairset}$
that we consider in this section is defined as follows. For  
\begin{equation}
\Svec=\brac{\Flat{S_{1}}1,\ldots,\Flat{S_{d_{1}-1}}1,\dots,\Flat{S_{1}}{\leng},\ldots,\Flat{S_{d_{\leng}-1}}{\leng}}>\underline{0}\label{eq: Svec}
\end{equation}
and a subset $\Gset\subset\sl n\brac{\RR}$, let $\trunc{\Gset}{\Svec}$
denote the set 
\[
\trunc{\Gset}{\Svec}=\Gset\cap\cbrac{g=ka^{\prime}a_{\svec}^{\dprime}n:\Flat sj_{i}\leq\Flat{S_{i}}j\;\,\forall1\leq i<d_{j},1\leq j\leq\leng}.
\]
  Specifically, let 
\begin{align*}
\trunc{\byparby{\funddom}T}{\Svec} & =K^{\prime}\parby{G^{\dprime}}{\trunc{\groupfund{d_{1}}}{\Svec^{(1)}}\times\cdots\times\trunc{\groupfund{d_{\leng}}}{\Svec^{(\leng)}}}\,\byparby{A^{\prime}}T\parby{N^{\prime}}{\cube}.
\end{align*}
One can deduce from the proof of Proposition \ref{prop: volumes}
that
\begin{equation}
\begin{aligned}\mu\brac{\byparby{\funddom}T-\trunc{\byparby{\funddom}T}{\Svec}}=O\brac{e^{\ind T-S_{\min}}}\end{aligned}
\label{eq: volumes truncated}
\end{equation}
where $S_{\min}=\min_{i,j}S_{i}^{(j)}$ and $\ind$ was defined in
(\ref{eq: choose height}). Indeed, 
\begin{align*}
\mu\left(\byparby{\funddom}T-\trunc{\byparby{\funddom}T}{\Svec}\right) & =O\left(\mu_{A}\brac{\byparby{A^{\prime}}TA^{\dprime}-\byparby{A^{\prime}}T\trunc{\brac{A^{\dprime}}}{\underline{S}}}\right)
\end{align*}
where $\mu_{A}\brac{\byparby{A^{\prime}}TA^{\dprime}-\byparby{A^{\prime}}T\trunc{\brac{A^{\dprime}}}{\underline{S}}}$
equals $\mu_{A^{\prime}}(\byparby{A^{\prime}}T)\mu_{A^{\dprime}}\brac{A^{\dprime}-\trunc{\brac{A^{\dprime}}}{\underline{S}}}$,
as $\mu_{A}=\mu_{A^{\prime}}\times\mu_{A^{\dprime}}$. In the proof
of Prop.\ \ref{prop: volumes} it was shown that $\mu_{A^{\prime}}(\byparby{A^{\prime}}T)\ll e^{\ind T}$;
now
\begin{align*}
\mu_{A^{\dprime}}\left(A^{\dprime}-\trunc{\brac{A^{\dprime}}}{\underline{S}}\right) & =\int_{\left[0,\infty\right)^{n-\leng}-\prod_{j=1}^{n-\leng}\left[0,S_{j}\right]}d\mu_{A^{\dprime}}\\
 & \ll e^{\brac{2n-d_{1}-d_{\leng}}T}\cdot\sum_{j=1}^{n-\leng}\int_{\left(S_{j}\leq s_{j}<\infty\right)\times\prod_{i\neq j}\left(0\leq s_{i}<\infty\right)}\frac{du_{1}\cdots du_{n-\leng}}{e^{u_{1}+\cdots+u_{n-\leng}}}\\
 & \ll e^{\brac{2n-d_{1}-d_{\leng}}T}\cdot\left(e^{-s_{1}}+\cdots+e^{-s_{n-\leng}}\right)\\
 & \ll e^{\brac{2n-d_{1}-d_{\leng}}T}\cdot e^{-\min S_{i}}.
\end{align*}

In order to count $\sl n(\ZZ)$ elements in $\trunc{\funddom_{T}}{\Svec}$
and $\trunc{\bypar{\funddom}T}{\Svec}$, we will employ a method that
was developed by Gorodnik and Nevo in \cite{GN1}. This method produces
counting statements for lattice subgroups of Lie groups, including
an error estimate. The bound on the error exponent involves a parameter
$\errexp$ that we now turn to define. 

Let $\Lat$ be a lattice subgroup of a simple algebraic Lie group
$G$, that is, $\Lat$ is discrete and $\haar_{G}(G/\Lat)<\infty.$
There exists $p\in\mathbb{N}$ for which the matrix coefficients $\dbrac{\pi_{G/\Gamma}^{0}u,v}$
are in $L^{p+\e}\left(G\right)$ for every $\e>0$, with $u,v$ lying
in a dense subspace of $L_{0}^{2}\left(G/\Gamma\right)$ (see \cite[Thm.~5.6]{GN_book}).
Let $p\left(\Gamma\right)$ be the smallest among these $p$'s, and
denote 
\[
m\left(\Lat\right)=\begin{cases}
1 & \text{if \ensuremath{p=2},}\\
2\left\lceil p\left(\Gamma\right)/4\right\rceil  & \text{otherwise}
\end{cases},
\]
\begin{equation}
\errexp\left(\Lat\right)=\frac{1}{2m\left(\Gamma\right)\left(1+\dim G\right)}\in\left(0,1\right).\label{eq: tau}
\end{equation}
We say that a family $\cbrac{\Gset_{T}}_{T>0}$ of subsets of a Lie
group is \emph{Lipschitz well rounded} (\cite[Def.~1.1]{GN1}) if
there exist two constants $T_{0},C_{\Gset}>0$ that do not depend
on $T$ such that for every $T>T_{0}$ and $\e>0$
\[
\frac{\mu\brac{\Gset_{T}^{\left(+\e\right)}}}{\mu\brac{\Gset_{T}^{\left(-\e\right)}}}:=\frac{\mu\brac{\bigcup_{u,v\in\nbhd{\e}{}}\nbhd{\e}{}\Gset_{T}\nbhd{\e}{}}}{\mu\brac{\bigcap_{u,v\in\nbhd{\e}{}}\nbhd{\e}{}\Gset_{T}\nbhd{\e}{}}}\leq1+C_{\Gset}\e.
\]
 The goal of this section is to prove the following proposition. 
\begin{prop}
\label{thm: Counting with S and W}Let $\Lat<\sl n\left(\RR\right)$
be a lattice subgroup, $\errexp=\errexp\left(\Lat\right)$ as in (\ref{eq: tau}),
$\Svec$ as in (\ref{eq: Svec}),
\begin{align*}
\sumS & =\text{sum of components of \ensuremath{\Svec}},
\end{align*}
 and $\lm_{n}=\frac{n^{2}}{2\left(n^{2}-1\right)}$. If $\pairset\subseteq\flagspace{\dvec}$
is boundary controllable, then for every $0<\e<\errexp$ and $T\geq\frac{\sumS}{\errexp\lm_{n}\ind}+O_{\pairset}(1)$
\[
\#\brac{\trunc{\byparby{\funddom}T}{\Svec}\brac{\pairset}\cap\Lat}=\frac{\mu\brac{\trunc{\byparby{\funddom}T}{\Svec}\brac{\pairset}}}{\mu\left(\Lat\backslash G\right)}+O_{\pairset,\e}\left(e^{\sumS/\lm_{n}}\mu\brac{\trunc{\byparby{\funddom}T}{\Svec}\brac{\pairset}}^{\left(1-\errexp+\e\right)}\right).
\]
\end{prop}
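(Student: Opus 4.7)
The strategy is to apply Gorodnik and Nevo's effective lattice-point counting theorem \cite{GN1}. For a lattice $\Lat<\sl n(\RR)$ with integrability exponent $\errexp=\errexp(\Lat)$ as in (\ref{eq: tau}), this theorem produces
\[
\#(\Gset_T \cap \Lat) = \frac{\mu(\Gset_T)}{\mu(\Lat\backslash G)} + O\brac{C_{\Gset}\cdot\mu(\Gset_T)^{1-\errexp+\e}}
\]
for any $\e>0$ once the family $\{\Gset_T\}$ of compact subsets is Lipschitz well-rounded with constant $C_\Gset$. All the content of the proposition thus reduces to establishing Lipschitz well-roundedness for the family $\{\trunc{\byparby{\funddom}T}{\Svec}(\pairset)\}_T$ with an effective bound $C_\Gset=O_\pairset(e^{\sumS/\lm_n})$, and then tracking how this bound interacts with the GN error term.

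I would verify Lipschitz well-roundedness factor by factor along the Refined Iwasawa decomposition of $\trunc{\byparby{\funddom}T}{\Svec}(\pairset)$. The cube $\parby{N^{\prime}}{\cube}$ is smooth; the polytope $\byparby{A^{\prime}}T\subset A'\cong\RR^{\leng-1}$ is cut out by finitely many hyperplanes, so its boundary contributes a well-roundedness constant uniform in $T$. Condition \ref{cond: K'} combined with Lemma \ref{prop: spread model BCS} turns the boundary controllability of $\pairset$ into Lipschitz well-roundedness of $\parby{\wc}{\pairset}$, with a constant that is $\pairset$-dependent but independent of $\Svec$ and $T$. The quantitative heart of the argument lies in the $A^{\dprime}$-direction: for each truncated Siegel domain $\trunc{\groupfund{d_j}}{\Svec^{(j)}}\subset\sl{d_j}(\RR)$, an $\e$-thickening of its boundary must have Haar measure bounded, up to a $\pairset$-dependent constant, by $\e\cdot e^{|\Svec^{(j)}|/\lm_n}$ times the Haar measure of $\trunc{\groupfund{d_j}}{\Svec^{(j)}}$ itself. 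The estimate follows the same pattern as the cusp bound (\ref{eq: volumes truncated}): an $\e$-perturbation pushes into the cusp, the Jacobian of the Haar measure along $A^{\dprime}$ picks up a factor of size $e^{|\Svec^{(j)}|}$ near the truncation walls, and $\lm_n=\frac{n^{2}}{2(n^{2}-1)}$ reflects the intrinsic scaling of matrix-coefficient decay built into the GN machinery. Fubini in Refined Iwasawa coordinates then assembles the individual factor-estimates into the desired $C_\Gset=O_\pairset(e^{\sumS/\lm_n})$.

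Feeding this back into Gorodnik--Nevo produces exactly the stated error term. The hypothesis $T\geq\sumS/(\errexp\lm_n\ind)+O_\pairset(1)$ demarcates the regime in which the error is genuinely smaller than the main term: combining the well-roundedness constant $e^{\sumS/\lm_n}$ with the volume estimate $\mu(\trunc{\byparby{\funddom}T}{\Svec}(\pairset))\asymp e^{\ind T}$ implicit in Proposition \ref{prop: volumes} and (\ref{eq: volumes truncated}), the threshold is precisely where the product $e^{\sumS/\lm_n}e^{\ind T(1-\errexp+\e)}$ drops below the main term. The main obstacle I anticipate is the sharp cuspidal boundary estimate with its explicit $\Svec$- and $\lm_n$-dependence; this ultimately reduces to integrating the Haar-measure growth rate along the walls of the Weyl chamber in each factor $\sl{d_j}(\RR)$ of $G^{\dprime}$ and matching it with the optimal thickening parameter dictated by the GN machinery.
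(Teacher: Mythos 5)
Your high-level plan (reduce to Lipschitz well-roundedness of the family and invoke Gorodnik--Nevo) is the same as the paper's, but you have misattributed where both $\sumS$ and $\lm_n$ actually enter, and this matters. You state the GN error term as $O(C_\Gset\cdot\mu(\Gset_T)^{1-\errexp+\e})$, dropping the exponent $\frac{\dim G}{1+\dim G}$ that GN put on the Lipschitz constant; working backward from the desired error, you then conclude that $C_\Gset$ must be $O_\pairset(e^{\sumS/\lm_n})$ and assign the blame for this to an $\e$-thickening estimate for each truncated Siegel domain $\trunc{\groupfund{d_j}}{\Svec^{(j)}}$. Neither step matches how the estimate actually works. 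The truncated sets $\trunc{\brac{K\times A^{\dprime}\times N^{\dprime}}_{\pairset}}{\Svec}$ are Lipschitz well rounded with constant $O(1)$ \emph{independently of $\Svec$} (this is \cite[Lem.~11.1]{HK\_gcd} in the paper); there is no $\Svec$-dependent thickening blowup coming from those factors, and certainly no $\lm_n$ hidden in it. The $\Svec$-dependence instead enters through the \emph{Refined Iwasawa decomposition constant}: when one assembles well-roundedness of the product $K'G''_\bullet A'_\bullet N'_\cube$ out of well-roundedness of the factors, commuting a two-sided $\e$-neighborhood through the $A^{\dprime}$-part costs an adjoint factor that is $e^{2\sumS}$ on the truncated range (\cite[Lem.~10.8]{HK\_gcd}), so the Lipschitz constant is $C_\Gset=O(e^{2\sumS})$, not $e^{\sumS/\lm_n}$. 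The quantity $\lm_n=\frac{n^2}{2(n^2-1)}$ only appears afterwards, when one applies GN's actual error term $C_\Gset^{\frac{\dim G}{1+\dim G}}\mu(\Gset_T)^{1-\errexp+\e}$: here $\dim\sl n(\RR)=n^2-1$, so $\frac{\dim G}{1+\dim G}=\frac{n^2-1}{n^2}=\frac{1}{2\lm_n}$, and $(e^{2\sumS})^{1/(2\lm_n)}=e^{\sumS/\lm_n}$. Your two errors therefore cancel numerically, but the argument as written does not establish the needed Lipschitz constant (the factor-by-factor bound you sketch is not where the exponential actually comes from), nor would it survive contact with the correct statement of GN's theorem.

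One more nonnegligible omission: the polytopes $\bypar{A^{\prime}}T$ (anticanonical height) are unbounded polytopes whose volume grows like $Te^T$ rather than $e^T$, so "cut out by finitely many hyperplanes, hence uniformly well rounded" is not automatic. The paper has to verify the well-roundedness of $\bypar{A^{\prime}}T$ by an explicit computation using the iterated exponential integral $f_m(T;a)$ from Lemma \ref{lem: Florian measure}; in your write-up this is asserted without justification, and this is precisely the place where the two height functions behave differently.
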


The proof will make use of the following theorem:

\begin{thm}[{\cite[Thm.~1.9, Thm.~4.5, and Rem.~1.10]{GN1}}]
\label{thm: GN Counting thm}Let $G$ be an algebraic simple Lie
group, and $\Lat<G$ a lattice subgroup. Assume that $\{\Gset_{T}\}\subset G$
is a family of compact subsets satisfying that $\haar_{G}\brac{\Gset_{T}}\to\infty$
as $T\to\infty$. If the family $\{\Gset_{T}\}$ is Lipschitz well
rounded with parameters $(C_{\Gset},T_{0})$, then there exists $T_{1}>0$
such that for every $\delta>0$ and $T>T_{1}$:
\[
\#\left|\brac{\Gset_{T}\cap\Lat}-\frac{\haar\brac{\Gset_{T}}}{\haar\brac{\Lat\backslash G}}\right|=O_{\Lat,\delta}\left(C_{\Gset}^{\frac{\dim G}{1+\dim G}}\cdot\haar\brac{\Gset_{T}}^{1-\errexp\left(\Lat\right)+\delta}\right),
\]
where $\errexp\left(\Lat\right)$ is as in (\ref{eq: tau}). The
parameter $T_{1}$ is such that $T_{1}\geq T_{0}$ and for every $T\geq T_{1}$
\begin{equation}
C_{\Gset}^{\frac{\dim G}{1+\dim G}}=O_{\Gamma}\brac{\haar\left(\Gset_{T}\right)^{\errexp\left(\Lat\right)}}.\label{eq: def of T_1 in GN thm}
\end{equation}
\end{thm}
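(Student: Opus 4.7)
The plan is to apply the Gorodnik--Nevo counting Theorem \ref{thm: GN Counting thm} directly to the family $\Gset_{T}=\trunc{\byparby{\funddom}T}{\Svec}\brac{\pairset}$. These sets are compact thanks to the truncation at $\Svec$ and the bound $T$, and their Haar measure grows like $e^{\ind T}$ by Proposition \ref{prop: volumes} together with (\ref{eq: volumes truncated}). What remains is to establish that the family is Lipschitz well-rounded with a quantitative bound on $C_{\Gset}$, and then to translate the threshold hypothesis (\ref{eq: def of T_1 in GN thm}) into the lower bound on $T$ appearing in the statement.

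For Lipschitz well-roundedness, I would thicken $\Gset_{T}$ by a small $\nbhd{\eta}{}$-ball in a bi-invariant metric on both sides and compare $\mu(\Gset_{T}^{(+\eta)})$ to $\mu(\Gset_{T}^{(-\eta)})$. The boundary of $\Gset_{T}$ splits into three kinds of faces: the boundary of $\parby{\wc}{\pairset}$, which is boundary controllable by Proposition \ref{lem: lift to G''} and contributes only $O_{\pairset}(\eta)$; the top and bottom faces coming from $\byparby{A^{\prime}}T$, which likewise contribute $O(\eta)$ to the Lipschitz ratio; and the truncation faces $\Flat sj_{i}=\Flat{S_{i}}j$ in the $A^{\dprime}$ factor, which are the critical ones because they carry exponential weight. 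The key step is to pull the $\eta$-ball through the refined Iwasawa decomposition near these truncation faces: a bi-invariant $\eta$-neighborhood stretches along the root spaces under conjugation by $a_{\Svec}^{\dprime}$, producing coordinate displacements of size at most $\eta\cdot e^{2\sumS}$, the exponent being twice the sum of positive-root values on $\Svec$ (reflecting adjoint conjugation from both sides of $g$). Aggregating all contributions yields $C_{\Gset}\ll_{\pairset}e^{2\sumS}$.

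With this bound, the threshold hypothesis (\ref{eq: def of T_1 in GN thm}) reads $e^{2\sumS\cdot\dim G/(1+\dim G)}\ll\mu(\Gset_{T})^{\errexp}\asymp e^{\errexp\ind T}$. Since $\dim G=n^{2}-1$, we have $\dim G/(1+\dim G)=(n^{2}-1)/n^{2}=1/(2\lm_{n})$, so the left side is $e^{\sumS/\lm_{n}}$. The hypothesis therefore becomes $T\geq\sumS/(\errexp\lm_{n}\ind)+O_{\pairset}(1)$, which is precisely the threshold in the proposition. Substituting $C_{\Gset}^{\dim G/(1+\dim G)}\ll e^{\sumS/\lm_{n}}$ into the error term of Theorem \ref{thm: GN Counting thm} produces $e^{\sumS/\lm_{n}}\mu(\Gset_{T})^{1-\errexp+\e}$, exactly the big-$O$ expression claimed.

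The main obstacle will be the quantitative control of the refined Iwasawa projection deep in the cusp. If $u,v\in\nbhd{\eta}{}$ and $g=qa^{\prime}n^{\prime}$ with $q=k^{\prime}k^{\dprime}p^{\dprime}a_{\svec}^{\dprime}n^{\dprime}$ lying near a truncation wall, one must bound the change in each refined Iwasawa coordinate of $ugv$ uniformly in $T$ and $\Svec$. The Jacobian of the Iwasawa projection grows exponentially along the $A^{\dprime}$ direction, and careful tracking shows the growth rate is governed by the sum of positive roots evaluated on the truncation --- exactly the $e^{2\sumS}$ factor that feeds into $\lm_{n}$. A secondary concern is that all implicit constants must be uniform in $\pairset$ (as long as $\pairset$ is boundary controllable), which is guaranteed by the boundary controllability of $\parby{\wc}{\pairset}$ via Lemma \ref{prop: spread model BCS} and Condition \ref{cond: K'}.
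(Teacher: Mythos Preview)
You are not proving the stated theorem. Theorem \ref{thm: GN Counting thm} is the Gorodnik--Nevo counting theorem, quoted from \cite{GN1} as a black box; the paper does not prove it at all. Your proposal instead sketches a proof of Proposition \ref{thm: Counting with S and W}, which \emph{applies} Theorem \ref{thm: GN Counting thm} to the specific family $\trunc{\byparby{\funddom}T}{\Svec}\brac{\pairset}$. So as an answer to the question asked, the proposal is off target: nothing you wrote addresses the spectral-gap and mean-ergodic-theorem machinery that underlies the Gorodnik--Nevo result.

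That said, if one reads your proposal as an attempt at Proposition \ref{thm: Counting with S and W}, it is essentially the paper's argument. The paper too reduces everything to Lipschitz well-roundedness of the family and then tracks the Lipschitz constant through the refined Iwasawa decomposition, arriving at $C_{\Gset}=O(e^{2\sumS})$ and hence $C_{\Gset}^{\dim G/(1+\dim G)}=O(e^{\sumS/\lm_{n}})$. The difference is packaging: where you sketch the $\eta$-thickening argument and the conjugation estimate by hand, the paper invokes a modular result \cite[Cor.~4.3]{HK_WellRoundedness} that reduces well-roundedness of a product to that of its factors, cites \cite[Lem.~10.8]{HK_gcd} for the refined-Iwasawa constant $e^{2\sumS}$, cites \cite[Lem.~11.1]{HK_gcd} and \cite[Prop.~3.5]{HK_WellRoundedness} for the fixed factors, and verifies the $A^{\prime}$-factor directly via Lemma \ref{lem: Florian measure}. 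Your direct approach and the paper's citation-based one are the same argument at different levels of abstraction. One caveat: your claim that the implicit constants are ``uniform in $\pairset$'' is not quite what is needed or claimed---the error is $O_{\pairset,\e}$, i.e.\ it is allowed to depend on $\pairset$, and the paper makes no uniformity assertion there.
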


\begin{proof}[Proof of Proposition \ref{thm: Counting with S and W}]
According to Theorem \ref{thm: GN Counting thm}, the proposition
follows once showing that the sets $\trunc{\byparby{\funddom}T}{\Svec}\brac{\pairset}$
are Lipschitz well rounded. Recall that $\wc$ is diffeomorphic to
the group $K\times P^{\dprime}$ where $P^{\dprime}$ is, in turn,
diffeomorphic to $A^{\dprime}\times N^{\dprime}$; therefore
\[
\trunc{\byparby{\funddom}T}{\Svec}\brac{\pairset}=\trunc{\wc_{\pairset}}{\Svec}\,\byparby{A^{\prime}}TN_{\cube}^{\prime}\simeq\trunc{\brac{K\times A^{\dprime}\times N^{\dprime}}_{\pairset}}{\Svec}\,\byparby{A^{\prime}}TN_{\cube}^{\prime}\,.
\]
In \cite{HK_WellRoundedness}, we developed a method to consider the
well roundedness of families of sets of that form. We have shown \cite[Cor.~4.3]{HK_WellRoundedness}
that in this type of sets, it is sufficient that each of the components
in the product (i.e. the appropriate subsets of $K$, $A^{\dprime}$,
$N^{\dprime}$, $A^{\prime}$ and $N^{\prime}$) is well rounded.
Then the product sets are well rounded, and the Lipschitz constant
$C_{\Gset}$ is the product of the Lipschitz constants of the components,
times a constant that depends on the specific decomposition of the
group, which is in this case is the Refined Iwasawa decomposition.
We have shown in \cite[Lem.~10.8]{HK_gcd} that the Refined Iwasawa
constant is $e^{2\sumS}$. For sets that are fixed,  being boundary
controllable and bounded is sufficient for well roundedness with Lipschitz
constant that depends on the set (\cite[Prop.~3.5]{HK_WellRoundedness}).
Hence $N_{\cube}^{\prime}$ is Lipschitz well rounded with Lipschitz
constant in $O(1)$. The case of $\trunc{\wc_{\pairset}}{\Svec}$
is more complicated: it is boundary controllable (by Lemma \ref{prop: spread model BCS})
and bounded, but $\wc$ is not a group. However, it is diffeomorphic
to a product of groups, and indeed $\trunc{\brac{K\times A^{\dprime}\times N^{\dprime}}_{\pairset}}{\Svec}$
is Lipschitz well rounded with Lipschitz constant in $O(1)$, independently
of $\Svec$ (\cite[Lem.~11.1]{HK_gcd}). Hence, if we assume for now
that the families $\cbrac{\parby{A^{\prime}}T}$ and $\cbrac{\bypar{A^{\prime}}T}$
are also Lipschitz well rounded with Lipschitz constants that are
$O(1)$, then the families $\trunc{\byparby{\funddom}T}{\Svec}\brac{\pairset}$
are Lipschitz well rounded, with Lipschitz constant $C=O\brac{e^{2\sumS}}$.
In particular, 
\[
C^{\frac{1}{2\lm_{n}}}=C^{\frac{\dim\sl n\brac{\RR}}{1+\dim\sl n\brac{\RR}}}=O\brac{e^{\sumS/\lm_{n}}}.
\]
 It therefore remains to verify that the families $\cbrac{\byparby{A^{\prime}}T}\subset A^{\prime}$
are Lipschitz well rounded with Lipschitz constants that are $O(1)$.
For the family $\cbrac{\parby{A^{\prime}}T}$, this has been proved
in \cite[Prop.~9.6]{HK_gcd}. For the family $\cbrac{\bypar{A^{\prime}}T}$,
we recall Lemma \ref{lem: Florian measure} and the definition of
$f_{m}(T)$. Let 
\begin{align*}
f_{m}(T;a) & :=\int_{a}^{T}e^{s_{m}}\int_{a}^{T-s_{m}}e^{s_{m-1}}\cdots\int_{a}^{T-s_{2}-\cdots-s_{m}}e^{s_{1}}ds_{1}\cdots ds_{m}.\\
 & =\int_{0}^{T-a}e^{s_{m}-a}\int_{0}^{T-a-s_{m}}e^{s_{m-1}-a}\cdots\int_{0}^{T-a-s_{2}-\cdots-s_{m}}e^{s_{1}-a}ds_{1}\cdots ds_{m}\\
 & =e^{-ma}f_{m}(T-a).
\end{align*}
Then, by Lemma \ref{lem: Florian measure},\setlength{\abovedisplayskip}{0pt} 
\setlength{\belowdisplayskip}{0pt}
\[
f_{m}(T;a)=e^{-ma}f_{m}(T-a)=e^{T-(m+1)a}\cdot\sum_{i=0}^{m-1}(-1)^{m-i-1}\frac{(T-a)^{i}}{i!}+(-e^{-a})^{m}.
\]
Recall that 
\[
\bypar{A^{\prime}}T=\left\{ a_{t_{1},\dots,t_{\leng-1}}^{\prime}=\exp\left(\sum_{i=1}^{\leng-1}t_{i}\liev_{i}^{\prime}\right):t_{1},\dots,t_{\leng-1}\geq0;\,\sum_{i=1}^{\leng-1}(d_{i}+d_{i+1})t_{i}\leq T\right\} ,
\]
where
\[
\liev_{i}^{\prime}=(\underset{d_{i-1}}{\underbrace{0,\dots,0}},\underset{d_{i}}{\underbrace{d_{i}^{-1},\dots,d_{i}^{-1}}},\underset{d_{i+1}}{\underbrace{-d_{i+1}^{-1},\dots,-d_{i+1}^{-1}}},0,\dots,0)\in\lie{A^{\prime}}.
\]
Then
\begin{align*}
\bypar{A^{\prime\,\left(+\e\right)}}T & =\left\{ \exp\left(\sum_{i=1}^{\leng-1}(t_{i}+s_{i})\liev_{i}^{\prime}\right):\substack{t_{1},\dots,t_{\leng-1}\geq0\\
\,s_{1},\dots,s_{\leng-1}\in[-\epsilon,\epsilon]
}
;\,\sum_{i=1}^{\leng-1}(d_{i}+d_{i+1})t_{i}\leq T\right\} \\
 & \subset\left\{ \exp\left(\sum_{i=1}^{\leng-1}t_{i}\liev_{i}^{\prime}\right):t_{1},\dots,t_{\leng-1}\geq-\epsilon;\,\sum_{i=1}^{\leng-1}(d_{i}+d_{i+1})t_{i}\leq T+(2n-d_{1}-d_{\leng})\epsilon\right\} \\
 & \subset\left\{ \exp\left(\sum_{i=1}^{\leng-1}t_{i}\liev_{i}^{\prime}\right):t_{1},\dots,t_{\leng-1}\geq-2n\epsilon;\,\sum_{i=1}^{\leng-1}(d_{i}+d_{i+1})t_{i}\leq T+2n\epsilon\right\} \\
 & =:B^{(+\epsilon)}
\end{align*}
and
\begin{align*}
\bypar{A^{\prime\,\left(-\e\right)}}T & =\left\{ \exp\left(\sum_{i=1}^{\leng-1}t_{i}\liev_{i}^{\prime}\right):\forall s_{1},\dots,s_{\leng-1}\in[-\epsilon,\epsilon]:\substack{t_{1}+s_{1},\dots,t_{\leng-1}+s_{\leng-1}\geq0,\\
\sum_{i=1}^{\leng-1}(d_{i}+d_{i+1})(t_{i}+s_{i})\leq T
}
\right\} \\
 & \subset\left\{ \exp\left(\sum_{i=1}^{\leng-1}t_{i}\liev_{i}^{\prime}\right):t_{1},\dots,t_{\leng-1}\geq\epsilon;\,\sum_{i=1}^{\leng-1}(d_{i}+d_{i+1})t_{i}\leq T-(2n-d_{1}-d_{\leng})\epsilon\right\} \\
 & \subset\left\{ \exp\left(\sum_{i=1}^{\leng-1}t_{i}\liev_{i}^{\prime}\right):t_{1},\dots,t_{\leng-1}\geq2n\epsilon;\,\sum_{i=1}^{\leng-1}(d_{i}+d_{i+1})t_{i}\leq T-2n\epsilon\right\} \\
 & =:B^{(-\epsilon)}.
\end{align*}
As a result, for $\epsilon<\brac{4\left(\leng-1\right)n}^{-1}$,
\[
\frac{\mu_{A^{\prime}}\left(\bypar{A^{\prime\,\left(+\e\right)}}T\right)}{\mu_{A^{\prime}}\left(\bypar{A^{\prime\,\left(-\e\right)}}T\right)}\leq\frac{\mu_{A^{\prime}}\left(B^{(+\epsilon)}\right)}{\mu_{A^{\prime}}\left(B^{(-\epsilon)}\right)}=\frac{f_{\leng-1}(T+2n\epsilon;-2n\epsilon)}{f_{\leng-1}(T-2n\epsilon;+2n\epsilon)}=
\]
\[
=\frac{e^{2\left(\leng-1\right)n\epsilon}f_{\leng-1}(T)}{e^{-2\left(\leng-1\right)n\epsilon}f_{\leng-1}(T)}=e^{4n\left(\leng-1\right)\epsilon}\leq1+\frac{\epsilon}{6\left(\leng-1\right)n}\,.\qedhere
\]
\end{proof}
The proof of Proposition \ref{thm: Counting with S and W} relies
on the fact that the sets $\trunc{\funddom_{T}}{\Svec}\brac{\pairset}$
and $\trunc{\bypar{\funddom}T}{\Svec}\brac{\pairset}$ are Lipschitz
well rounded, which only happens when $\Svec$ is fixed. In the following
claim, we will extend the counting in Proposition \ref{thm: Counting with S and W}
to the case where 
\[
\Svec=\Svec\brac T=\brac{\Flat{S_{1}}1\left(T\right),\ldots\ldots,\Flat{S_{d_{\leng}-1}}{\leng}\left(T\right)}
\]
grows as $T$ does. The sets $\trunc{\byparby{\funddom}T}{\Svec(T)}\brac{\pairset}$
are not well rounded, hence the idea of the proof would be to control
the growth of $\Svec\brac T$ such that the number of lattice points
in $\byparby{\funddom}T\brac{\pairset}-\trunc{\byparby{\funddom}T}{\Svec(T)}(\pairset)$
would get swallowed in the error term obtained in Proposition \ref{thm: Counting with S and W}.
\begin{prop}
\label{thm: Counting with S(T) and W(T)}With the notations from Proposition
\ref{thm: Counting with S and W}, $0<\e<\errexp$, $\delta\in\left(0,\errexp-\e\right)$,
and $\Svec\left(T\right)$ such that $\sumS\left(T\right)\leq\dl\lm_{n}\ind T+O_{\pairset}(1)$:
\[
\#\brac{\trunc{\byparby{\funddom}T}{\Svec(T)}\brac{\pairset}\cap\Lat}=\frac{\mu\left(\byparby{\funddom}T\brac{\pairset}\right)}{\mu\left(\Lat\backslash G\right)}+O_{\pairset,\e}\brac{e^{\ind T\left(1-\errexp+\dl+\e\right)}}.
\]
\end{prop}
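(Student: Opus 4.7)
The plan is to obtain the statement by directly applying Proposition \ref{thm: Counting with S and W} to $\trunc{\byparby{\funddom}T}{\Svec(T)}(\pairset)$ with the given (growing) parameter $\Svec(T)$, and then upgrading the main term from the truncated volume $\mu(\trunc{\byparby{\funddom}T}{\Svec(T)}(\pairset))$ to the full volume $\mu(\byparby{\funddom}T(\pairset))$ using the volume estimate \eqref{eq: volumes truncated}. The freedom we have is in how the components of $\Svec(T)$ are allocated: the hypothesis constrains only $\sumS(T)$, so we are free to take, say, all components equal, which will make $S_{\min}(T) = \Theta(\sumS(T))$ and maximize the gain of truncation.

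First I would check the hypothesis $T \geq \sumS(T)/(\errexp \lm_n \ind) + O_{\pairset}(1)$ required by Proposition \ref{thm: Counting with S and W}. Since $\sumS(T) \leq \dl \lm_n \ind T + O_{\pairset}(1)$ with $\dl < \errexp$, we get $\sumS(T)/(\errexp\lm_n \ind) \leq (\dl/\errexp) T + O_{\pairset}(1)$, which is strictly less than $T + O_{\pairset}(1)$ for $T$ sufficiently large. Then Proposition \ref{thm: Counting with S and W} gives
\[
\#\bigl(\trunc{\byparby{\funddom}T}{\Svec(T)}(\pairset) \cap \Lat\bigr) = \frac{\mu\bigl(\trunc{\byparby{\funddom}T}{\Svec(T)}(\pairset)\bigr)}{\mu(\Lat\backslash G)} + O_{\pairset,\e}\Bigl(e^{\sumS(T)/\lm_n}\,\mu\bigl(\trunc{\byparby{\funddom}T}{\Svec(T)}(\pairset)\bigr)^{1-\errexp+\e}\Bigr).
\]
Using $\mu(\trunc{\byparby{\funddom}T}{\Svec(T)}(\pairset)) \leq \mu(\byparby{\funddom}T(\pairset)) = O(e^{\ind T} \cdot T^{\leng-2})$ (Proposition \ref{prop: volumes}), and the polynomial factor being absorbed into $e^{\e \ind T}$, together with $\sumS(T)/\lm_n \leq \dl \ind T + O_{\pairset}(1)$, the Gorodnik--Nevo error becomes $O_{\pairset,\e}(e^{\dl \ind T}\cdot e^{\ind T(1-\errexp+\e)}) = O_{\pairset,\e}(e^{\ind T(1-\errexp+\dl+\e)})$, matching the claimed bound.

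The remaining task is to replace $\mu(\trunc{\byparby{\funddom}T}{\Svec(T)}(\pairset))$ by $\mu(\byparby{\funddom}T(\pairset))$ in the main term. By \eqref{eq: volumes truncated},
\[
\mu\bigl(\byparby{\funddom}T(\pairset)\bigr) - \mu\bigl(\trunc{\byparby{\funddom}T}{\Svec(T)}(\pairset)\bigr) = O\bigl(e^{\ind T - S_{\min}(T)}\bigr).
\]
Choosing the components of $\Svec(T)$ to be essentially equal, so that $S_{\min}(T) \geq c_n \sumS(T)$ for some constant $c_n > 0$ depending only on $n$ and $\leng$, this correction is $O(e^{\ind T(1 - c_n \dl \lm_n)})$, which after possibly enlarging $\e$ is also absorbed into $O_{\pairset,\e}(e^{\ind T(1-\errexp+\dl+\e)})$ (recalling $\dl < \errexp$ and that the bound is monotone in the exponent).

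The main obstacle is the bookkeeping of the trade-off between the truncation parameter $\Svec(T)$ and the Gorodnik--Nevo error: larger $\sumS(T)$ shrinks the main-term correction $e^{\ind T - S_{\min}(T)}$ but inflates the Lipschitz constant $e^{\sumS(T)/\lm_n}$ in the ergodic error. The hypothesis $\sumS(T) \leq \dl \lm_n \ind T$ is precisely the threshold below which the ergodic error stays within $e^{\ind T(1-\errexp+\dl+\e)}$, and picking $\Svec(T)$ with balanced components ensures $S_{\min}(T)$ is also large enough to kill the truncation correction. Once this balance is set up, the rest of the argument is a direct substitution into Proposition \ref{thm: Counting with S and W} and \eqref{eq: volumes truncated}.
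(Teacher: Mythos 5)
Your argument follows the same route as the paper's: verify the hypothesis of Proposition \ref{thm: Counting with S and W}, apply it, bound the ergodic error using $\sumS(T)/\lm_n \leq \dl\ind T + O(1)$ together with $\mu(\trunc{\byparby{\funddom}T}{\Svec(T)}\brac{\pairset}) \ll e^{\ind T}\cdot(\text{poly in }T)$, and then trade the truncated volume for the full volume using \eqref{eq: volumes truncated}. That is exactly what the paper does, so this is the same proof.

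One point worth flagging, which you raise but do not quite close: the replacement of $\mu(\trunc{\byparby{\funddom}T}{\Svec(T)}\brac{\pairset})$ by $\mu(\byparby{\funddom}T\brac{\pairset})$ costs $O(e^{\ind T - S_{\min}(T)})$, and for this to be absorbed in the claimed error one needs $S_{\min}(T)\geq(\errexp-\dl-\e)\ind T+O(1)$. Even with a balanced choice of $\Svec(T)$ and $\sumS(T)$ taken as large as allowed, this forces $\frac{\dl\lm_n}{n-\leng}\geq\errexp-\dl-\e$, which fails when $\dl$ is small. Your suggested remedy of ``enlarging $\e$'' does not work as stated, since $\e$ is fixed and $\dl$ is constrained by $\dl<\errexp-\e$. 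The paper's own proof leaves this implicit as well; the statement is in effect only invoked for the specific near-optimal $\dl=\errexp_n\big(1-\tfrac{\lm_n}{n-\leng+\lm_n}\big)$ chosen in the proof of Theorem \ref{thm: general thm}, for which the inequality above does hold (with room $\e$). So your reading is faithful to the paper, and the observation that the balanced choice of $\Svec(T)$ matters is correct and useful; just be aware that the absorption step is genuinely a constraint on $\dl$, not a free inequality.
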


\begin{proof}
We compute a bound on $\sumS\brac T$ for which the error term established
in Proposition \ref{thm: Counting with S and W} remains smaller than
the main term therein. According to Proposition \ref{prop: volumes}
, the main term in Proposition \ref{thm: Counting with S and W} is
of order $\mu\brac{\trunc{\byparby{\funddom}T}{\Svec}\brac{\pairset}}\approx e^{\ind T}$
and the error term is of order $e^{\frac{\sumS}{\lm_{n}}}e^{\ind T}$.
Namely, we require the existence of $\ga\in\left(0,1\right)$ for
which
\[
\sumS\brac T/\lm_{n}+\left(1-\errexp+\e\right)\cdot\ind T=\ga\ind T.
\]
Let $\delta$ denote the number $\ga+\errexp-\e-1$, i.e.\ $\ga=\delta+1+\e-\errexp$.
Then $\ga<1$ if and only if $\dl<\errexp-\e$, where $\errexp-\e$
is positive since $\errexp>\e$. We conclude that for $0<\dl<\errexp-\e$
and $\sumS\left(T\right)<\dl\lm_{n}\ind T$, the counting in $\trunc{\byparby{\funddom}T}{\Svec(T)}\brac{\pairset}$
applies with an error term of order $e^{\ga\ind T}=e^{T\ind\left(1-\errexp+\dl+\e\right)}$,
and main term that is $\mu\brac{\trunc{\byparby{\funddom}T}{\Svec(T)}\brac{\pairset}}\approx\mu\brac{\byparby{\funddom}T\brac{\pairset}}$
(as, according to (\ref{eq: volumes truncated}), the difference in
volumes between $\trunc{\byparby{\funddom}T}{\Svec(T)}\brac{\pairset}$
and $\byparby{\funddom}T\brac{\pairset}$ is swallowed in the error
term). 

As for the lower bound $T_{1}$ on $T$, in Proposition \ref{thm: Counting with S and W}
we had $\sumS\leq\ind\lm_{n}\errexp T+O_{\pairset}\left(1\right)$;
hence, combining both bounds on $\sumS$ we obtain
\[
\sumS\leq\min\left\{ \ind\lm_{n}\dl T,\:\ind\lm_{n}\errexp T\right\} +O_{\pairset}\left(1\right)=\ind\lm_{n}\dl T+O_{\pairset}\left(1\right)
\]
for $T$ large enough and $\delta\in\left(0,\errexp-\e\right)$. 
\end{proof}

\section{\label{sec: Counting the cusp}Neglecting the cusp}

The goal of this final section is to complete the counting of $\sl n(\ZZ)$
elements in $\funddom_{T}$ and $\bypar{\funddom}T$, by counting
in the sets $\funddom_{T}-\trunc{\funddom_{T}}{\Svec}$ and $\bypar{\funddom}T-\,\trunc{\bypar{\funddom}T}{\Svec}$
as $\Svec$ grows linearly with $T$. We prove the following:
\begin{prop}
\label{cor: very few SL(n,Z) points up the cusp}Let 
\[
\underline{\sigma}=\brac{\Flat{\s_{1}}1,\ldots,\Flat{\s_{d_{1}-1}}1,\dots,\Flat{\s_{1}}{\leng},\ldots,\Flat{\s_{d_{\leng}-1}}{\leng}}
\]
where $0<\Flat{\s_{i}}j<1$ for all $i,j$, and $\sigma_{\min}=\min_{i,j}\Flat{\s_{i}}j$.
Then for every $\e>0$
\begin{align*}
\#\left|\left(\funddom_{T}-\trunc{\funddom_{T}}{\underline{\sigma}T}\right)\cap\sl n\left(\ZZ\right)\right| & =O_{\e}\left(e^{T\left(2n-d_{l}-d_{1}-\sigma_{\min}+\e\right)}\right).
\end{align*}
\end{prop}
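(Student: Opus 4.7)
The proposition is a Schmidt-type cusp estimate. By (\ref{eq: volumes truncated}), the cusp region has volume
\[
\mu\bigl(\funddom_T - \trunc{\funddom_T}{\underline{\s}T}\bigr) = O\bigl(e^{T(\ind - \s_{\min})}\bigr)
\]
with $\ind = 2n - d_1 - d_\leng$, so the target bound merely asks that the $\sl n(\ZZ)$-count in the cusp be comparable to this volume, up to an $e^{\e T}$ factor. Such a loss is necessary because the cusp region is not Lipschitz well rounded, but it is a natural consequence of the fact that lattice points become sparser in the cusp in a quantifiable way.

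The plan is to decompose the cusp by which parameter $\Flat{s_i}j$ is large and then enumerate integer matrices via an auxiliary low-covolume sublattice of $\ZZ^n$. First I would write the cusp as a finite union $\bigcup_{(i,j)} R_{i,j}$ where
\[
R_{i,j} := \bigl\{g \in \funddom_T : \Flat{s_i}j(g) \geq \Flat{\s_i}j\, T\bigr\},
\]
so that by a union bound it suffices to prove the claim with $\s_{\min}$ replaced by $\Flat{\s_i}j$ on each fixed $R_{i,j}$. Next, for $\ga \in R_{i,j} \cap \sl n(\ZZ)$, Proposition \ref{prop: explicit RI coordinates of g}(iii$^\prime$) produces a distinguished sublattice $M_\ga < \Flat{\lat_\ga}j \leq \ZZ^n$, namely the $\ZZ$-span of the first $D_{j-1}+\hat{i}$ columns of $\ga$, with
\[
\covol{M_\ga} = \exp\!\Bigl(\tfrac{\hat{i}\,t_j + (d_j-\hat{i})\,t_{j-1}}{d_j} - \tfrac{\Flat{s_i}j}{2}\Bigr) \leq e^{T(1 - \Flat{\s_i}j/2)}.
\]
I would then group the matrices in $R_{i,j}$ by the primitive closure $L$ of $M_\ga$ in $\ZZ^n$ and count in two stages: by Schmidt's classical bound (\ref{eq: Schmidt}), the number of primitive rank-$(D_{j-1}+\hat{i})$ sublattices $L < \ZZ^n$ with $\covol L \leq e^{T(1 - \Flat{\s_i}j/2)}$ is controlled, and for each such $L$ the number of extensions of $L$ to a full basis $\ga \in R_{i,j}$ of $\ZZ^n$ is bounded by an elementary reduction-theory volume estimate on the remaining Iwasawa components, which itself decomposes into lower-rank counting problems in $L$ and in $\ZZ^n/L$.

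The main obstacle is the second stage: the extension count, as a function of $\covol L$, must have the exact power-law dependence that cancels Schmidt's $\covol L^n$ growth after dyadic summation over $\covol L$, so that the final exponent matches the cusp volume $e^{T(\ind - \Flat{\s_i}j)}$. This reduces to an identity between lower-dimensional Siegel volumes (computed as in Section \ref{sec: Spread Models}) and the RI-coordinate volume decomposition (Proposition \ref{prop: explicit RI coordinates of g} together with (\ref{eq: Haar measure on G})), and should be tractable by induction on either $n$ or $\leng$, reducing at each step to a strictly lower-dimensional instance of the counting problem treated in Section \ref{sec: counting with GN}. Secondary technicalities, namely passing from $M_\ga$ to its primitive closure (which differs by a subgroup of index $\leq \covol{M_\ga}$, contributing only a divisor-sum factor), and the polynomial-in-$T$ losses from dyadic decompositions and union bounds, contribute only factors that are absorbed into the $e^{\e T}$ slack built into the statement.
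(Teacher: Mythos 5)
Your approach is genuinely different from the paper's: you propose to condition on the primitive sublattice $L=M_\ga$ spanned by the first $D_{j-1}+\hat{\imath}$ columns, count those $L$ via Schmidt's bound (\ref{eq: Schmidt}), and then control the number of extensions of $L$ to a full basis $\ga$. The paper instead proves Lemma \ref{lem: Butz's Lemma} and Corollary \ref{cor: very few lattices with very short vector}, which count the columns $v_1,\dots,v_{\dd}$ of a reduced basis one at a time, with \emph{simultaneous} covolume constraints $\covol{\lat^i}\le X^{\theta_i}$ at every intermediate index $i$, and plug in the specific $\theta_i$ coming from Proposition~\ref{prop: explicit RI coordinates of g}.

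The difficulty is that your sketch defers precisely the step that carries the entire weight. You write that ``the extension count, as a function of $\covol L$, must have the exact power-law dependence that cancels Schmidt's $\covol L^n$ growth after dyadic summation'' and that this ``should be tractable by induction,'' but this is not established, and it is not obviously true in the form stated. The exponent $2n-d_1-d_\leng-\Flat{\s_{\hat{\imath}}}{j}$ in the paper is obtained only because \emph{all} the intermediate covolumes $\covol{\lat^{i'}}$ for $i'\ne D_{j-1}+\hat{\imath}$ are also constrained (e.g.\ $\covol{\lat^{i'}}\le e^{T\,\hat{\imath}'/d_1}$ when $i'\le d_1$ and $\covol{\lat^{D_k}}\le e^T$ for each $k$); conditioning only on $\covol L$ and then invoking an unspecified reduction-theoretic extension count does not capture these constraints, and without them the dyadic sum over $\covol L$ will not close to the correct power. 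A second concrete issue: $M_\ga$ is the $\ZZ$-span of a subset of columns of a matrix in $\sl n(\ZZ)$, hence a direct summand of $\ZZ^n$ and therefore already primitive; the ``primitive closure'' step is vacuous, and framing the index as contributing merely a divisor-sum factor suggests a misreading of the setup. Third, the case $j=\leng$ is not addressed; there the constraint on $\Flat{s_{\hat{\imath}}}{\leng}$ does not directly bound $\covol{M_\ga}$ in a useful way, and the paper has to pass to the dual/orthogonal flag (via \cite[Prop.~2.2, Lem.~A.13]{HK_dlattices}) before applying Corollary \ref{cor: very few lattices with very short vector}. Finally, for $j=1$ the correct bound $\covol{M_\ga}\le e^{T(\hat{\imath}/d_1-\Flat{\s_{\hat{\imath}}}{1}/2)}$ is strictly sharper than your stated $e^{T(1-\Flat{\s_{\hat{\imath}}}{1}/2)}$, and this sharpness is used in the paper's exponent computation; using the weaker bound would lose a factor that is not obviously recoverable.
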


\begin{prop}
\label{prop:Florian very few SL(n,Z) points up the cusp}For $\underline{\s}$
and $\sigma_{\min}$ as in Proposition \ref{prop:Florian very few SL(n,Z) points up the cusp}
and every $\e>0$
\begin{align*}
\left|\left(\bypar{\funddom}T-\,\trunc{\bypar{\funddom}T}{\underline{\sigma}T}\right)\cap\sl n\left(\ZZ\right)\right| & =O_{\e}\left(e^{T\left(1-\sigma_{\min}+\e\right)}\right).
\end{align*}
\end{prop}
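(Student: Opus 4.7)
My plan is to mirror the proof of Proposition \ref{cor: very few SL(n,Z) points up the cusp} almost verbatim, changing only the integration over the $A'$-factor. The truncation $\trunc{(\cdot)}{\underline{\sigma}T}$ restricts only the $A^{\dprime}$-coordinates $\Flat{s_{i}}{j}$ (the shape parameters of the sub-blocks), so it is insensitive to whether the $A'$-factor is $\parby{A'}{T}$ or $\bypar{A'}{T}$; the only real change from the previous proposition is that $\bypar{A'}{T}$ has volume $O_{\e}(e^{T(1+\e)})$ rather than $\asymp e^{(2n-d_{1}-d_{\leng})T}$, and this is what turns the exponent $2n-d_{1}-d_{\leng}-\sigma_{\min}+\e$ into $1-\sigma_{\min}+\e$.

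First I decompose
\[
\bypar{\funddom}{T}-\trunc{\bypar{\funddom}{T}}{\underline{\sigma}T}\;=\;\bigcup_{1\le j\le\leng,\;1\le i<d_{j}}\Gset_{T}^{(i,j)},
\]
where $\Gset_{T}^{(i,j)}\subseteq\bypar{\funddom}{T}$ is the subset on which $\Flat{s_{i}}{j}>\Flat{\sigma_{i}}{j}T$. Since the number of such pairs is $n-\leng$, a union bound reduces the claim to showing, for each individual $(i,j)$, a bound of the form $\#(\Gset_{T}^{(i,j)}\cap\sl n(\ZZ))=O_{\e}(e^{T(1-\Flat{\sigma_{i}}{j}+\e)})$; taking a maximum over $(i,j)$ then yields the stated bound with $\sigma_{\min}$ in the exponent.

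The key input is the $\sl n(\ZZ)$-counting step that appears in the proof of Proposition \ref{cor: very few SL(n,Z) points up the cusp}: it parameterizes integer matrices in the Siegel-style fundamental domain $\funddom$ by their refined Iwasawa coordinates and uses the classical fact that a matrix $\ga\in\sl n(\ZZ)\cap\funddom$ whose $j$-th $G^{\dprime}$-block is very skewed at position $i$ lives in a sharply thinned subfamily of $\sl n(\ZZ)$. This yields an upper bound of the form
\[
\#(\Gset_{T}^{(i,j)}\cap\sl n(\ZZ))\;\ll_{\e}\;e^{(-\Flat{\sigma_{i}}{j}+\e)T}\cdot\mu_{A'}(\bypar{A'}{T}).
\]
Because this bound is derived purely from constraints on the $G^{\dprime}$-block and $N^{\prime}_{\cube}$, and uses no feature of $\parby{A'}{T}$ beyond measurability, the derivation transports unchanged to our setting with $\bypar{A'}{T}$ in place of $\parby{A'}{T}$. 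By Lemma \ref{lem: Florian measure} together with \eqref{eq: A' measure}, $\mu_{A'}(\bypar{A'}{T})=\Theta(e^{T}T^{\leng-2})=O_{\e}(e^{T(1+\e)})$, and substituting gives the desired per-pair bound.

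The main (and only genuine) technical content is the integer-counting estimate with the explicit $e^{-\Flat{\sigma_{i}}{j}T}$ savings factor; it expresses the fact that $\sl n(\ZZ)$ is thin deep in the Siegel cusp, forcing matrices with large $\Flat{s_{i}}{j}$ to have highly constrained columns in the relevant block. Once that slice-wise bound is granted from Proposition \ref{cor: very few SL(n,Z) points up the cusp}, the passage from $H_{\infty}$ to $H_{\ac}$ is a routine swap of the $A'$-integral, which Lemma \ref{lem: Florian measure} handles directly.
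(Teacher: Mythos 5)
Your proposal arrives at the right exponent, but the claimed factorization
\[
\#\bigl(\Gset_{T}^{(i,j)}\cap\sl n(\ZZ)\bigr)\;\ll_{\e}\;e^{(-\Flat{\sigma_{i}}{j}+\e)T}\cdot\mu_{A'}\bigl(\byparby{A'}{T}\bigr)
\]
is not something the proof of Proposition \ref{cor: very few SL(n,Z) points up the cusp} actually establishes, and asserting that it ``uses no feature of $\parby{A'}{T}$ beyond measurability'' is where the argument breaks down. In that proof, the savings $e^{-\Flat{\sigma_i}{j}T}$ is not a multiplicative correction applied to an $A'$-volume integral: the whole bound comes in one piece from Corollary \ref{cor: very few lattices with very short vector}, applied to the integral sub-flag $\Flat{\lat}{1}<\cdots<\Flat{\lat}{\leng-1}$ with an explicit vector of exponents $\underline{\theta}$. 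Those $\theta_i$'s are read off from covolume bounds $\covol{\lat^{\hat y}}\le X^{\theta_{\hat y}}$, and a good share of them (e.g.\ $\theta_{d_1+\cdots+d_k}=1$) come precisely from the individual $A'$-constraints $t_k\le T$, i.e.\ from $H_\infty$. The count does not disintegrate over $A'$; discrete lattice flags have no well-defined ``count at fixed $a'$,'' and the only reason the final exponent happens to equal ``$(-\sigma)+\log\mu_{A'}(\parby{A'}{T})/T$'' in the $H_\infty$ case is an after-the-fact numerical coincidence, not a structural property of the argument.

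Once the $A'$-constraint becomes $\sum_j(d_j+d_{j+1})t_j\le T$ rather than $t_j\le T$ for each $j$, the $\theta$-vector feeding into Corollary \ref{cor: very few lattices with very short vector} has to be rebuilt, and the Corollary as stated only handles product-type constraints $\covol{\lat^i}\le X^{\theta_i}$. This is the genuine new content that your swap of $\mu_{A'}$-volumes skips. The paper's proof handles it by covering the simplex $\{\,(T_1,\dots,T_{\leng-1})\ge 0:\sum(d_j+d_{j+1})T_j\le T\,\}$ by $O_{\delta}(1)$ boxes $\mathcal{C}=\prod_j[0,\alpha_j T]$ with $\sum_j\alpha_j(d_j+d_{j+1})\le 1+\delta$, then applying Corollary \ref{cor: very few lattices with very short vector} within each box with box-dependent $\theta$'s; the resulting exponent is $\sum_k\alpha_k(d_k+d_{k+1})-\Flat{\sigma_i}{j}\le 1+\delta-\Flat{\sigma_i}{j}$, and summing the boundedly many boxes and absorbing $\delta$ into $\e$ gives the statement. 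You should state that lemma application explicitly; Lemma \ref{lem: Florian measure} alone (which only gives the volume of $\bypar{A'}{T}$) does not supply it.
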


The proofs of Propositions \ref{cor: very few SL(n,Z) points up the cusp}
and \ref{prop:Florian very few SL(n,Z) points up the cusp} require
the concept of a reduced basis, which was introduced in the context
of the construction of Siegel sets (e.g. \cite[ X]{Bekka_Mayer,Rag72},
\cite[Sec.~4.4 (4.23)]{Terras}). Recall that if an $n\times n$ matrix
has columns $(v_{1},\ldots,v_{n})$ and Iwasawa coordinates $kan$
where $k=(\phi_{1},\ldots,\phi_{n})$ and $a=\diag{a_{1},\ldots,a_{n}}$,
then for every $j$ the vector $a_{j}\phi_{j}$ is the projection
of $v_{j}$ to the space $V_{j-1}^{\perp}=\perpen{\brac{\sp{\RR}{v_{1},\ldots,v_{j-1}}}}$,
and in particular $a_{j}$ is the distance of $v_{j}$ from $V_{j-1}=\sp{\RR}{v_{1},\ldots,v_{j-1}}=\sp{\RR}{\phi_{1},\ldots,\phi_{j-1}}$. 
\begin{defn}[{\cite[Def.~3.4]{HK_gcd}}]
\label{def: Siegel reduced basis}A basis $\left\{ v_{1},\ldots,v_{\dd}\right\} $
for a lattice $\lat$ is called \emph{reduced} if for every $j\in\left\{ 1,\ldots,\dd\right\} $
the element $v_{j}$ satisfies that:

\begin{enumerate}
\item \textbf{(red1)} The length $a_{j}\neq0$ of the projection $a_{j}\phi_{j}$
of $v_{j}$ to $V_{j-1}^{\perp}$ is minimal (where $V_{0}=\sp{}{\emptyset}=\left\{ 0\right\} $).
\item \textbf{(red2)} The projection of $v_{j}$ to $V_{j-1}$ lies in the
Dirichlet domain of the lattice $\sp{\ZZ}{a_{1}\phi_{1},\dots,a_{j}\phi_{j}}$.
\end{enumerate}
\end{defn}

If the columns of a matrix form a reduced basis to the lattice they
span, and the matrix has Iwasawa coordinates $kan$, then $a_{j}\leq a_{j+1}$
for every $j$, and the above-diagonal entries of the upper unipotent
matrix $n$ lie in $\sbrac{-1/2,1/2}$.

The fundamental domain $\groupfund n\subset\sl n\brac{\RR}$ defined
in (\ref{eq: fund SL(Z)}) satisfies that if $g\in\groupfund n$ then
the columns of $g$ form a reduced basis to the lattice that they
span. We shall now extend the definition of a reduced basis from
lattices to flags, so that if $g\in\sl n\brac{\RR}$ is in $\diag{\groupfund{d_{1}},\groupfund{d_{2}},\ldots,\groupfund{d_{\leng}}}\subset G^{\dprime}$,
then the columns of $g$ form a reduced basis to the $\dvec$-flag
that they span, $\flag_{g}\brac{\ZZ}$.
\begin{defn}
\label{def: Siegel reduced basis for flags}Let $\Lambda<\RR^{n}$
be an $\dd$-lattice, and let $d_{1},\dots,d_{\leng-1}$ be a partition
of $\dd$. Recall $D_{j}=d_{1}+\cdots+d_{j}$ (where $D_{0}=0$).
A basis $v_{1},\dots,v_{\dd}$ for $\Lambda$ is called $\brac{d_{1},\dots,d_{\leng-1}}$-reduced
if:
\begin{enumerate}
\item It satisfies \textbf{(red2)}.
\item The vectors $v_{D_{j-1}+1},\dots,v_{D_{j}}$ satisfy \textbf{(red1)}
 for every $j=1,\dots,\leng-1$.
\end{enumerate}
A basis for a primitive $\dvec$-flag of lattices $\flag\brac{\ZZ}$,
where $\dvec=(d_{1},\ldots,d_{\leng-1},d_{\leng})$, is called reduced
if it its first $n-d_{\leng}$ elements form a $\brac{d_{1},\dots,d_{\leng-1}}$-reduced
basis to the lattice that they span. 
\end{defn}

The following lemma will play a key role. 
\begin{lem}
\label{lem: Butz's Lemma}Let $\latfull<\RR^{n}$ be a full lattice,
and let $\left[\a_{i},\b_{i}\right]$ be $\dd-1$ intervals with $0\leq\a_{i}<\b_{i}$.
The number of $r$-lattices $\lat<\latfull$ satisfying that 
\[
\covol{\lat^{i}}\in\sbrac{\cov^{\a_{i}},\cov^{\b_{i}}},
\]
where $\lat^{i}=\sp{\ZZ}{v_{1},\ldots,v_{i}}$ and $\cbrac{v_{1},\ldots,v_{\dd}}$
is some $\brac{d_{1},\dots,d_{\leng-1}}$-reduced basis of $\lat$,
is $O_{n,\covol{\latfull}}\brac{\cov^{e\left(\underline{\a},\underline{\b}\right)}}$,
where 
\[
e\brac{\underline{\a},\underline{\b}}=2\sum_{i=1}^{\dd}\b_{i}+(n-\dd-1)\beta_{\dd}+\sum_{i=1}^{\dd-1}\brac{n-i}\brac{\b_{i}-\a_{i}}.
\]
\end{lem}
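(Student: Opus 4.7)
The plan is to prove this by induction on the rank, bounding the number of chains of sublattices $\lat^{1}<\lat^{2}<\cdots<\lat^{\dd}=\lat$ with $\covol{\lat^{i}}\in[\cov^{\a_{i}},\cov^{\b_{i}}]$. Since every lattice $\lat$ satisfying the hypothesis yields at least one such chain (from its reduced basis), the number of such lattices is bounded above by the number of such chains. Crucially the reduced basis conditions (red1) and (red2) are not needed for the counting itself, only to guarantee that a chain exists for each $\lat$.

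The base case $i=1$ counts rank-$1$ sublattices $\lat^{1}=\ZZ v_{1}$ with $\norm{v_{1}}\leq \cov^{\b_{1}}$ by ball-counting in $\latfull$, giving $O_{n,\covol{\latfull}}(\cov^{n\b_{1}})$. For the inductive step $i\geq2$, I would condition on $\lat^{i-1}$ and count rank-$i$ extensions $\lat^{i}\supset\lat^{i-1}$ with $\covol{\lat^{i}}\leq\cov^{\b_{i}}$. Such extensions correspond (up to a bounded torsion contribution from $(\latfull\cap V_{i-1})/\lat^{i-1}$, absorbed into the implied constants) to rank-$1$ subgroups of $\latfull/\lat^{i-1}$. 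Realizing this quotient as a rank-$(n-i+1)$ lattice in $V_{i-1}^{\perp}$ of covolume $\covol{\latfull}/\covol{\lat^{i-1}}$ via orthogonal projection, each such subgroup is generated by a vector of length $\covol{\lat^{i}}/\covol{\lat^{i-1}}\leq\cov^{\b_{i}}/\covol{\lat^{i-1}}$; a standard Minkowski-type lattice-point bound therefore gives at most
\[
O_{\covol{\latfull}}\!\left(\frac{\cov^{(n-i+1)\b_{i}}}{\covol{\lat^{i-1}}^{n-i}}\right)\ \leq\ O_{\covol{\latfull}}\!\left(\cov^{(n-i+1)\b_{i}-(n-i)\a_{i-1}}\right)
\]
extensions, where the last inequality uses $\covol{\lat^{i-1}}\geq\cov^{\a_{i-1}}$.

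Multiplying these bounds over $i=1,\ldots,\dd$, the total chain count is $O_{n,\covol{\latfull}}(\cov^{E})$ with
\[
E=n\b_{1}+\sum_{i=2}^{\dd}\!\left[(n-i+1)\b_{i}-(n-i)\a_{i-1}\right].
\]
A direct algebraic comparison (expanding both $e$ and $E$ by the coefficient of each $\b_{i}$ and $\a_{i}$) shows that $e(\underline{\a},\underline{\b})-E=\sum_{i=1}^{\dd-1}(\b_{i}-\a_{i})\geq 0$, which then yields the claimed bound $O_{n,\covol{\latfull}}(\cov^{e(\underline{\a},\underline{\b})})$.

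The main technical obstacle is ensuring the leading-order asymptotic in the step-$i$ lattice-point count dominates uniformly over the relevant range: when $\covol{\lat^{i-1}}$ sits near its upper endpoint $\cov^{\b_{i-1}}$, the projected quotient lattice has small covolume and one must track the competition between the main term $L^{n-i+1}/\covol{\pi(\latfull)}$ and lower-order error terms. This is naturally handled by splitting the range of $\covol{\lat^{i-1}}$ into dyadic intervals and summing geometrically. A secondary issue is the torsion in $\latfull/\lat^{i-1}$ from non-saturated extensions, but its order $[\latfull\cap V_{i-1}:\lat^{i-1}]$ is controlled polynomially in $\cov$ by Minkowski and absorbed into the constants.
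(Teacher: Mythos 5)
Your proof is correct and takes a genuinely different route from the paper's. The paper counts choices of basis vectors $v_i\in\latfull$ step by step, and uses \textbf{(red2)} in an essential way: conditioning on $\lat^{i-1}$, the projection of $v_i$ to $V_{i-1}$ is confined to a Dirichlet domain of $\lat^{i-1}$, so $v_i$ lives in a product of a Dirichlet domain with a ball of radius $R_i=\cov^{\b_i-\a_{i-1}}$, giving the per-step bound $O(R_i^{\,n-i+1}\covol{\lat^{i-1}})$. You instead count sublattices $\lat^i\supset\lat^{i-1}$ directly via rank-one subgroups of the quotient, which renders \textbf{(red2)} unnecessary for the counting step (it is used only to produce the chain, as you note). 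Your approach also gives a slightly sharper per-step bound because you use the true radius $\cov^{\b_i}/\covol{\lat^{i-1}}$ rather than the uniform $R_i$; indeed your exponent $E$ satisfies $e(\underline{\a},\underline{\b})-E=\sum_{i=1}^{\dd-1}(\b_i-\a_i)\ge 0$, as you computed, so you are in fact proving a marginally stronger statement.

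One spot in the argument should be tightened. You claim the torsion contribution from $(\latfull\cap V_{i-1})/\lat^{i-1}$ is \emph{bounded} and can be absorbed into the implied constants; it is not. Its order is $\covol{\lat^{i-1}}/\covol{\latfull\cap V_{i-1}}$, which is only bounded below by Minkowski, not above, and can grow like $\cov^{\b_{i-1}}$. Likewise, the orthogonal projection $\pi(\latfull)$ to $V_{i-1}^\perp$ has covolume $\covol{\latfull}/\covol{\latfull\cap V_{i-1}}$, not $\covol{\latfull}/\covol{\lat^{i-1}}$. What actually saves the computation is that these two discrepancies cancel: the number of rank-one sublattices of $\pi(\latfull)$ generated by a vector of length at most $L$ is $O\!\left(L^{n-i+1}\covol{\latfull\cap V_{i-1}}/\covol{\latfull}\right)$, while each such sublattice lifts to exactly $\covol{\lat^{i-1}}/\covol{\latfull\cap V_{i-1}}$ distinct extensions $\lat^i$ (cosets of $\lat^{i-1}$ in $\latfull\cap V_{i-1}$, since $\lat^i\cap V_{i-1}=\lat^{i-1}$ always). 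The product is exactly your $O_{\covol{\latfull}}\!\left(L^{n-i+1}\covol{\lat^{i-1}}\right)$, so the per-step formula and the final exponent stand; the explanation should simply say the torsion factor cancels the covolume discrepancy rather than that it is bounded.
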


In fact, for this lemma it is sufficient that the bases $\cbrac{v_{1},\ldots,v_{\dd}}$
satisfy (\textbf{red2}). A special case of Lemma \ref{lem: Butz's Lemma}
in which $\a_{\dd}=0$ and $\b_{\dd}=1$ appears in \cite[Lem.~6.3]{HK_gcd}.
The proofs are actually identical, but we prove the lemma here for
completeness.
\begin{proof}
We count the number of possibilities to choose a $\brac{d_{1},\dots,d_{\leng-1}}$-reduced
basis $\cbrac{v_{1},\ldots,v_{\dd}}$ for $\lat$, which satisfies
$\covol{\lat^{i}}\in\sbrac{\cov^{\a_{i}},\cov^{\b_{i}}}$ for every
$i=1,\ldots,\dd$. Recall that $a_{i}$ is the distance of $v_{i}$
from the subspace $V_{i-1}$, which means $a_{i}=\covol{\lat^{i}}/\covol{\lat^{i-1}}$.
As a result, if $\lat$ is such that $\covol{\lat^{i}}\in\left[\cov^{\a_{i}},\cov^{\b_{i}}\right]$,
then 
\[
a_{i}\leq R_{i}:=X^{\b_{i}-\a_{i-1}}.
\]
Denote by $\#v_{i}\vert_{\lat^{i-1}}$ the number of possibilities
for choosing $v_{i}$ given that $\lat^{i-1}$ is known. We first
claim that for every $1\leq i\leq\dd$,
\begin{equation}
\#v_{i}\vert_{\lat^{i-1}}=O\left(\brac{R_{i}}^{n-i+1}\cdot\covol{\lat^{i-1}}\right).\label{eq: =000023 of possibilities for v_i}
\end{equation}
Indeed, $\#v_{1}\vert_{\lat^{0}}$ is simply the number of possibilities
for choosing an element $v_{1}$ of  $\Flat{\lat}{\leng}$ inside
an origin-centered ball in $\RR^{n}$ of radius $a_{1}=\norm{v_{1}}\leq R_{1}$,
namely
\[
\#v_{1}\vert_{\lat^{0}}\le\#\brac{\Flat{\lat}{\leng}\cap\ball{R_{1}}^{n}}=O\brac{R_{1}^{n}}.
\]
For $i>1$, recall that the orthogonal projection of $v_{i}$ to the
subspace $V_{i-1}$ lies inside a Dirichlet domain of the lattice
\[
\tilde{\lat}^{i-1}:=\sp{\ZZ}{a_{1}\phi_{1},\dots,a_{i-1}\phi_{i-1}}.
\]
Thus, $v_{i}$ has to be chosen from the set of $\Flat{\lat}{\leng}$
elements that are of distance at most $a_{i}\leq R_{i}$ from the
Dirichlet domain for $\tilde{\lat}^{i-1}$ in $\sp{\RR}{\lat^{i-1}}$.
These are the $\Flat{\lat}{\leng}$ elements that lie in the product
of the Dirichlet domain for $\tilde{\lat}^{i-1}$ (in $V_{i-1}$)
with an origin-centered ball $\ball{R_{i}}^{n-\left(i-1\right)}$
in the $n-(i-1)$ dimensional subspace $V_{i-1}^{\perp}$, of radius
$R_{i}$. Then 
\begin{eqnarray*}
\#v_{i}\vert_{\lat^{i-1}} & \leq & \#\brac{\Flat{\lat}{\leng}\cap\cbrac{\ball{R_{i}}^{n-\left(i-1\right)}\times\mbox{Dirichlet domain for \ensuremath{\tilde{\lat}^{i-1}}}}}\\
 & = & O\brac{\vol\brac{\ball{R_{i}}^{n-\left(i-1\right)}}\cdot\covol{\lat^{i-1}}}\\
 & = & O\brac{R_{i}^{n-i+1}\cdot\covol{\lat^{i-1}}},
\end{eqnarray*}
which proves (\ref{eq: =000023 of possibilities for v_i}). Now, the
number of possibilities for $\Flat{\lat}1<\cdots<\Flat{\lat}{\leng-1}=\lat$
is
\[
O\left(\prod_{i=1}^{\dd}\brac{\#v_{i}\vert_{\lat^{i-1}}}\right)=O\left(\prod_{i=1}^{\dd}\brac{R_{i}^{n-i+1}\cdot\covol{\lat^{i-1}}}\right)
\]
\[
=O\left(\prod_{i=1}^{\dd}\brac{X^{\brac{\b_{i}-\a_{i-1}}\brac{n-i+1}}\cdot X^{\b_{i-1}}}\right),
\]
where $\a_{0}=0$ (as $\covol{\lat^{1}}=\left\Vert v_{1}\right\Vert \geq\cov^{0}$).
Since 
\begin{align*}
\sum_{i=1}^{\dd}\brac{n-i+1}\brac{\b_{i}-\a_{i-1}}+\b_{i-1} & =\\
\sum_{i=1}^{\dd-1}\brac{n-i}\brac{\b_{i}-\a_{i}}+2\sum_{i=1}^{\dd-1}\b_{i}+(n-d+1)\beta_{\dd} & =e\brac{\underline{\a},\underline{\b}},
\end{align*}
then the number of such lattices $\lat$ is bounded by $X^{e\brac{\underline{\a},\underline{\b}}}$. 
\end{proof}
\begin{cor}
\label{cor: very few lattices with very short vector}Assume the notations
of Lemma \ref{lem: Butz's Lemma}, and let $0<\theta_{1},\dots,\theta_{\dd}<1$,
$\boldsymbol{\theta}=\sum_{i=1}^{\dd}\theta_{i}$. The number of integral
$\dd$-lattices $\lat<\ZZ^{n}$ satisfying that 
\[
\covol{\lat^{i}}\in\sbrac{1,\cov^{\theta_{i}}}
\]
is, for every $\e>0$, 
\[
O_{\e,C}\,\brac{\cov^{(n-\dd-1)\theta_{\dd}+2\boldsymbol{\theta}+\e}}.
\]
\end{cor}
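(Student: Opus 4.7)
The plan is to derive this from Lemma~\ref{lem: Butz's Lemma} by a dyadic partition of the covolume ranges. Applied naively with $\a_{i}=0$ and $\b_{i}=\t_{i}$, the lemma gives the exponent
\[
e(\underline{0},\underline{\t})=2\boldsymbol{\t}+(n-\dd-1)\t_{\dd}+\sum_{i=1}^{\dd-1}(n-i)\t_{i},
\]
which exceeds the target by the ``penalty term'' $\sum_{i<\dd}(n-i)\t_{i}$. The key observation is that this penalty depends only on the widths $\b_{i}-\a_{i}$, so I can make it arbitrarily small by chopping each interval $[0,\t_{i}]$ into $N$ equal pieces, at the cost of summing the lemma's bound over $N^{\dd}$ boxes.

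Concretely, I will fix $N\in\NN$ (to be chosen in terms of $\e$) and, for each $\mathbf{k}=(k_{1},\ldots,k_{\dd})\in\{0,\ldots,N-1\}^{\dd}$, apply Lemma~\ref{lem: Butz's Lemma} with $\a_{i}=k_{i}\t_{i}/N$ and $\b_{i}=(k_{i}+1)\t_{i}/N$. Every lattice $\lat$ contributing to the corollary falls into at least one such box, so the total count is bounded by $\sum_{\mathbf{k}}O(\cov^{e(\underline{\a}^{\mathbf{k}},\underline{\b}^{\mathbf{k}})})$. The penalty $\sum_{i=1}^{\dd-1}(n-i)\t_{i}/N$ is $\mathbf{k}$-independent and pulls out as a factor $\cov^{O(1/N)}$, while the remaining exponent $2\sum_{i=1}^{\dd}(k_{i}+1)\t_{i}/N+(n-\dd-1)(k_{\dd}+1)\t_{\dd}/N$ separates additively in the $k_{i}$'s, so the sum factors into a product of one-variable geometric series.

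Each such series $\sum_{k=0}^{N-1}\cov^{c(k+1)\t_{i}/N}$ (with $c>0$ a fixed constant) is bounded by $N$ times its top term $\cov^{c\t_{i}}$, and assembling $c=2$ for $i<\dd$ with $c=2+(n-\dd-1)=n-\dd+1$ for $i=\dd$ produces the main exponent
\[
2\sum_{i<\dd}\t_{i}+(n-\dd+1)\t_{\dd}=2\boldsymbol{\t}+(n-\dd-1)\t_{\dd}.
\]
Thus the total is $O_{N,n}\brac{\cov^{2\boldsymbol{\t}+(n-\dd-1)\t_{\dd}+\sum_{i<\dd}(n-i)\t_{i}/N}}$. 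Since $\sum_{i<\dd}(n-i)\t_{i}<n^{2}$, choosing $N>n^{2}/\e$ pushes the extra exponent below $\e$, and the $N^{\dd}$ boxes are absorbed into the implicit constant (depending only on $n$ and $\e$). I do not anticipate a real obstacle; the only points worth double-checking are that the geometric-sum estimate is uniform in $\cov\to\infty$ once $N$ is fixed (it is, because $c\t_{i}/N>0$ is then a fixed positive rate), and that boundary overlaps between adjacent dyadic boxes only multiply the count by a bounded factor.
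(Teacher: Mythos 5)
Your proof is correct and matches what the paper (implicitly, via its citation to Prop.~6.4 of \cite{HK_gcd}) does: apply Lemma~\ref{lem: Butz's Lemma} over a partition of each range $[0,\theta_i]$ into $N$ subintervals so that the $(\b_i-\a_i)$-dependent penalty shrinks to $O(1/N)$, then bound the sum of the $N^{\dd}$ box contributions by a product of one-variable series each dominated by $N$ times its top term, and finally choose $N\gg n^{2}/\e$ to push the surplus exponent below $\e$. The remaining points you flag (overlap at box boundaries costing at most a bounded multiplicative factor, uniformity in $\cov$ once $N$ is fixed) are indeed the only ones needing a sanity check and they go through exactly as you say.
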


\begin{proof}
The proof is identical to the one in \cite[Prop.~6.4]{HK_gcd}.
\end{proof}
\begin{proof}[Proof of Proposition \ref{cor: very few SL(n,Z) points up the cusp}]
Let $\ga_{\flag}=ka_{\underline{s}}^{\prime\prime}a_{\underline{t}}^{\prime}n\in\sl n\left(\ZZ\right)$.
Recall that $\ga_{\flag}\in\funddom_{T}-\funddom_{T}^{\underline{\sigma}T}$
if and only if the columns of $\ga_{\flag}$ form a basis to $\flag_{g}\brac{\ZZ}$,
and there exist $j\in\{1,\dots,\leng\}$ and $\ii\in\left\{ 1,\ldots,d_{j}-1\right\} $
such that $\Flat{s_{\ii}}j>\Flat{\sigma_{\ii}}jT$ w.r.t.\ the reduced
basis in the columns of $\ga_{\flag}$. 

Assume first that $j<\leng$, and let $i=D_{j-1}+\ii$. Set $\lat=\Flat{\lat}{\leng-1}$,
whose rank is $\dd=n-d_{\leng}$. By Proposition \ref{prop: explicit RI coordinates of g}$(iii)$
and the fact that $\lat^{i}$ is integral, 
\[
1\leq\covol{\lat^{\jj}}=\covol{\Flat{\lat}{j-1}}\covol{L_{j}^{\ii}}=e^{t_{j-1}}\cdot e^{\frac{\ii\left(t_{j}-t_{j-1}\right)}{d_{j}}-\frac{\Flat{s_{\ii}}j}{2}}
\]
\[
\leq e^{\frac{\ii t_{j}+(d_{j}-\ii)t_{j-1}}{d_{j}}-\frac{\Flat{\sigma_{\ii}}jT}{2}}\leq\begin{cases}
e^{T\brac{\frac{\ii}{d_{1}}-\frac{\Flat{\sigma_{\ii}}1}{2}}} & j=1\\
e^{T\brac{1-\frac{\Flat{\sigma_{\ii}}j}{2}}} & j>1
\end{cases}.
\]
Similar considerations show that for all other pairs $(x,\jjp)$
\textcolor{orange}{} with $j<\leng$:
\[
1\leq\covol{\lat^{\jjp}}\leq e^{\frac{\iip t_{x}+(d_{x}-\iip)t_{x-1}}{d_{x}}}\leq\begin{cases}
e^{T\cdot\frac{\iip}{d_{1}}} & x=1\\
e^{T} & x>1
\end{cases}.
\]
We now apply Corollary \ref{cor: very few lattices with very short vector}
for the lattice $\lat$, with $\t_{1},,,,\t_{n-d_{\leng}}$ that stem
from the inequalities above; note also that 
\[
\t_{d_{1}}=\t_{d_{1}+d_{2}}=\cdots=\t_{d_{1}+\cdots+d_{\leng-1}=n-d_{\leng}}=1,
\]
since for every $k=1,\ldots,\leng-1$ we have that 
\[
\covol{\Flat{\lat}k}=\covol{\lat^{d_{1}+\cdots+d_{k}}}=e^{t_{k}}\leq e^{T}=X.
\]
The application of Corollary \ref{cor: very few lattices with very short vector}
in the case where the $j$ for which $\Flat{s_{\ii}}j>\Flat{\sigma_{\ii}}jT$
is $j=1$ is with the following $\t_{1},,,,\t_{n-d_{\leng}}$: 
\[
\begin{array}{ccc}
\brac{\theta_{1},\ldots,\theta_{d_{1}}} & = & \brac{\frac{1}{d_{1}},\ldots,\frac{\ii}{d_{1}}-\frac{\Flat{\sigma_{\ii}}1}{2},\dots,\frac{d_{1}}{d_{1}}}\\
\brac{\theta_{d_{1}+1},\ldots,\theta_{d_{1}+d_{2}}} & = & \brac{1,\ldots,1}\\
\vdots\\
\brac{\theta_{d_{1}+\cdots+d_{\leng-2}+1},\ldots,\theta_{d_{1}+\cdots+d_{\leng-2}+d_{\leng-1}}} & = & \brac{1,\ldots,1}
\end{array},
\]
for which the error exponent in Corollary \ref{cor: very few lattices with very short vector}
is 
\begin{align*}
(d_{\leng}-1)\theta_{n-d_{\leng}}+2\sum\theta_{i} & =d_{\leng}-1+2\brac{n-d_{\leng}-d_{1}+\frac{1}{d_{1}}\sum_{k=1}^{d_{1}}k-\frac{2\Flat{\sigma_{\ii}}1}{2}}\\
 & =2n-d_{1}-d_{\leng}-\Flat{\sigma_{\ii}}1.
\end{align*}
The application of Corollary \ref{cor: very few lattices with very short vector}
in the case where the $j$ for which $\Flat{s_{\ii}}j>\Flat{\sigma_{\ii}}jT$
is $1<j<\leng$ is with the following $\t_{1},,,,\t_{n-d_{\leng}}$:
\[
\begin{array}{ccc}
\brac{\theta_{1},\ldots,\theta_{d_{1}}} & = & \brac{\frac{1}{d_{1}},\ldots,\frac{d_{1}}{d_{1}}}\\
\brac{\theta_{d_{1}+1},\ldots,\theta_{d_{1}+d_{2}}} & = & \brac{1,\ldots,1}\\
\vdots & \vdots & \vdots\\
\brac{\theta_{d_{1}+\cdots+d_{j-1}+1},\ldots,\theta_{d_{1}+\cdots+d_{j-1}+d_{j}}} & = & \brac{1,\ldots,1-\frac{\Flat{\sigma_{\ii}}j}{2},\dots,1}\\
\vdots & \vdots & \vdots\\
\brac{\theta_{d_{1}+\cdots+d_{\leng-2}+1},\ldots,\theta_{d_{1}+\cdots+d_{\leng-2}+d_{\leng-1}}} & = & \brac{1,\ldots,1}
\end{array},
\]
for which the error exponent in Corollary \ref{cor: very few lattices with very short vector}
is
\begin{align*}
(d_{\leng}-1)\theta_{n-d_{\leng}}+2\sum\theta_{i} & =d_{\leng}-1+2\left(n-d_{\leng}-d_{1}+\frac{1}{d_{1}}\sum_{z=1}^{d_{1}}z-\frac{\s_{\ii}^{(j)}}{2}\right)\\
 & =2n-d_{1}-d_{\leng}-\Flat{\sigma_{\ii}}j.
\end{align*}

We see that for any $1\leq j<\leng$ one has that $(d_{l}-1)\theta_{d}+2\sum\theta_{i}=2n-d_{1}-d_{\leng}+1-\s_{\ii}^{(j)}$,
hence according to Corollary \ref{cor: very few lattices with very short vector},
the number of such possible flags is 
\[
O_{\e}\brac{e^{T\left(2n-d_{1}-d_{\leng}-\s_{\ii}^{(j)}+\e\right)}}.
\]

Finally, assume that $j=\leng$. By \cite[Prop.~2.2 and Lem.~A.13]{HK_dlattices},
since $\Flat{\lat}{\leng-1}$ is primitive, 
\[
\frac{\covol{\brac{\perpen{\brac{\Flat{\lat}{\leng-1}}}}^{d_{\leng}-\ii}}}{\covol{\Flat{\lat}{\leng-1}}}\asymp\covol{\qlat_{\leng}^{\ii}}.
\]
The right-hand side is in fact $\covol{\qlat_{\leng}^{\ii}}=e^{\frac{\ii-t_{\leng-1}}{d_{j}}-\frac{\Flat{s_{\ii}}{\leng}}{2}}$
by \ref{prop: explicit RI coordinates of g}$(iii)$, while $\covol{\Flat{\lat}{\leng-1}}=e^{t_{\leng-1}}$.
We get that, up to an additive constant that becomes negligible when
$t_{\leng-1}$ is large, $\Flat{s_{\ii}}{\leng}>\Flat{\sigma_{\ii}}{\leng}T$
implies that
\[
1\leq\covol{\brac{\lat^{\perp}}^{d_{\leng}-\ii}}<e^{t_{\leng-1}-\frac{\ii t_{\leng-1}}{d_{\leng}}-\frac{\Flat{\sigma_{\ii}}{\leng}t_{\leng-1}}{2}}\leq e^{T\brac{\frac{d_{\leng}-\ii}{d_{\leng}}-\frac{\Flat{\sigma_{\ii}}{\leng}}{2}}}.
\]
Now consider the flag $\{0\}<\perpen{\brac{\Flat{\lat}{\leng-1}}}<\perpen{\brac{\Flat{\lat}{\leng-2}}}<\cdots<\perpen{\brac{\Flat{\lat}1}}<\ZZ^{n}$
(which clearly determine the original flag); by the same considerations
as for the case $j=1$ above, the number of such possible lattice
flags is $e^{T\left(2n-d_{1}-d_{\leng}-\Flat{\sigma_{\ii}}{\leng}+\e\right)}$.
All in all, 
\[
\#\left|\brac{\parby{\funddom}T-\funddom_{T}^{\underline{\sigma}T}}\cap\sl n\left(\ZZ\right)\right|=O\brac{e^{T\left(2n-d_{1}-d_{\leng}-\sigma_{\min}+\e\right)}}.\qedhere
\]
\end{proof}
\begin{proof}[Proof of Proposition \ref{prop:Florian very few SL(n,Z) points up the cusp}]
Let $\ga_{\flag}=ka_{\underline{s}}^{\prime\prime}a_{\underline{t}}^{\prime}n\in\sl n\left(\ZZ\right)$,
and assume that
\[
\flag\brac{\ZZ}=\Flat{\lat}0<\cdots<\Flat{\lat}{\leng-1}<\Flat{\lat}{\leng}=\ZZ^{n}.
\]
By definition, $\ga_{\flag}\in\,\bypar{\funddom}T-\,\bypar{\funddom}T^{\underline{\sigma}T}$
if and only if (i) the columns of $\ga_{\flag}$ form a basis to $\flag_{g}\brac{\ZZ}$,
(ii) there are $0\leq T_{1},\dots,T_{\leng-1}$ such that 
\[
\log\brac{\covol{\Flat{\lat}j}}\leq T_{j}
\]
and
\[
\sum_{j=1}^{\leng-1}(d_{j}+d_{j+1})T_{j}\leq T,
\]
and (iii) there exist $j\in\{1,\dots,\leng\}$ and $\jj=D_{j-1}+\ii$
with $\ii\in\left\{ 1,\ldots,d_{j}-1\right\} $ for which $\Flat{s_{\ii}}j\geq\Flat{\sigma_{\ii}}jT$.
Set $\lat=\Flat{\lat}{\leng-1}$, whose rank is $\dd=n-d_{\leng}$.
Then, if $j<\leng$, Proposition \ref{prop: explicit RI coordinates of g}$(iii)$
and the fact that $\lat^{\jj}$ is integral imply that
\[
1\leq\covol{\lat^{\jj}}=\covol{\Flat{\lat}{j-1}}\covol{L_{j}^{\ii}}=e^{t_{j-1}}\cdot e^{\frac{\ii\left(t_{j}-t_{j-1}\right)}{d_{j}}-\frac{\Flat{s_{\ii}}j}{2}}
\]
\[
\leq e^{\frac{\ii t_{j}+(d_{j}-\ii)t_{j-1}}{d_{j}}-\frac{\Flat{\sigma_{\ii}}jT}{2}}\leq\begin{cases}
e^{T_{1}\frac{\ii}{d_{j}}-T\frac{\Flat{\sigma_{\ii}}j}{2})} & j=1\\
e^{T_{j-1}\frac{d_{j}-\ii)}{d_{j}}+T_{j}\frac{\ii}{d_{j}}-T\frac{\Flat{\sigma_{\ii}}j}{2}} & j>1
\end{cases},
\]
where for all other pairs $(x,\jjp)$ we similarly have that
\[
1\leq\covol{\lat^{\jjp}}\leq\begin{cases}
e^{T_{1}\cdot\frac{\iip}{d_{1}}} & x=1\\
e^{\frac{(d_{x}-\iip)T_{x-1}}{d_{x}}+\frac{\iip T_{x}}{d_{x}}} & x>1
\end{cases}.
\]

In order to pass from $\leng-1$ parameters $T_{1},\ldots,T_{\leng-1}$
to a single parameter $T$, we approximate the simplex 
\[
\conv\cbrac{0,(d_{1}+d_{2})e_{1},\dots,(d_{\leng-1}+d_{\leng})e_{\leng-1}}
\]
with a covering by cubes that depend on a parameter $\dl$.  For
a fixed $\delta>0$, cover the simplex by $O_{\delta,\dd}(1)$ cubes
\[
\mathcal{C}=\prod_{j=1}^{\leng-1}[0,\alpha_{j}(\mathcal{C},\dl)]
\]
satisfying that 
\begin{equation}
\sum_{k=1}^{\leng-1}\alpha_{k}(\mathcal{C},\dl)(d_{k}+d_{k+1})\leq1+\delta.\label{eq: height cubes}
\end{equation}
For a given cube $\mathcal{C}$ and any $z=1,\dots,\leng-1$, denote
$T_{k}=T\cdot\alpha_{k}\brac{\mathcal{C},\dl}=T\a_{k}$. We now apply
Corollary \ref{cor: very few lattices with very short vector} with
\[
\begin{array}{ccc}
\brac{\theta_{1},\ldots,\theta_{d_{1}}} & = & \brac{\frac{\a_{1}}{d_{1}},\ldots,\frac{\a_{1}d_{1}}{d_{1}}}\\
\brac{\theta_{d_{1}+1},\ldots,\theta_{d_{1}+d_{2}}} & = & \brac{\frac{\a_{2}}{d_{2}}+\frac{(d_{2}-1)\a_{1}}{d_{2}},\ldots,\frac{\a_{2}d_{2}}{d_{2}}+\frac{(d_{2}-d_{2})\a_{1}}{d_{2}}}\\
\vdots\\
\brac{\theta_{d_{1}+\cdots+d_{j-1}+1},\ldots,\theta_{d_{1}+\cdots+d_{j-1}+d_{j}}} & = & (\frac{\a_{j}}{d_{j}}+\frac{(d_{j}-1)\a_{j-1}}{d_{j}},\ldots,\frac{\hat{i}\a_{j}}{d_{j}}+\frac{(d_{j}-\hat{i})\a_{j-1}}{d_{j}}-\frac{\Flat{\sigma_{\ii}}j}{2},\\
\vdots &  & ,\dots,\frac{\a_{j}d_{j}}{d_{j}}+\frac{(d_{j}-d_{j})\a_{j-1}}{d_{j}})\\
\brac{\theta_{d_{1}+\cdots+d_{\leng-2}+1},\ldots,\theta_{d_{1}+\cdots+d_{\leng-2}+d_{\leng-1}}} & = & \brac{\frac{\a_{\leng-1}}{d_{\leng-1}}+\frac{(d_{\leng-1}-1)\a_{\leng-2}}{d_{\leng-1}},\ldots,\frac{\a_{\leng-1}d_{\leng-1}}{d_{\leng-1}}+\frac{(d_{\leng-1}-d_{\leng-1})\a_{\leng-2}}{d_{\leng-1}}}
\end{array}.
\]
In particular, notice that $\t_{d_{1}+\cdots+d_{k}}=\a_{k}$ for every
$1\leq k\leq\leng-1$, reflecting the fact that $\covol{\Flat{\lat}k}\leq e^{T_{k}}=e^{\a_{k}T}=X^{\a_{k}}$.

Substituting the values of these $\t$'s into the error exponent in
Corollary \ref{cor: very few lattices with very short vector} yields
\begin{align*}
(d_{\leng}-1)\theta_{n-d_{\leng}}+2\sum\theta_{i} & =(d_{\leng}-1)\a_{\leng-1}+2\sum_{k=1}^{\leng-1}\frac{\a_{k}}{d_{k}}\sum_{x=1}^{d_{k}}x+2\sum_{k=2}^{\leng-1}\a_{k-1}\sum_{x=1}^{d_{k}}\frac{d_{k}-x}{d_{k}}-\frac{2\Flat{\sigma_{\ii}}j}{2}\\
 & =(d_{\leng}-1)\a_{\leng-1}+\sum_{k=1}^{\leng-1}\a_{k}\brac{1+d_{k}}+\sum_{k=2}^{\leng-1}\a_{k-1}\brac{d_{k}-1}-\Flat{\sigma_{\ii}}j\\
 & =\sum_{k=1}^{\leng-1}\a_{k}\brac{1+d_{k}}+\sum_{k=1}^{\leng-1}\a_{k}\brac{d_{k+1}-1}-\Flat{\sigma_{\ii}}j\\
 & =\sum_{k=1}^{\leng-1}\a_{k}\brac{d_{k}+d_{k+1}}-\Flat{\sigma_{\ii}}j.
\end{align*}
By (\ref{eq: height cubes}), the above is bounded by
\[
\leq1+\dl-\Flat{\sigma_{\ii}}j.
\]
Then, by Corollary \ref{cor: very few lattices with very short vector},
the number of such possible flags is 
\[
O_{\e}\brac{e^{T\left(1+\dl-\s_{\ii}^{(j)}+\e\right)}}.
\]
We conclude that the number of $\sl n\left(\ZZ\right)$ elements in
$\bypar{\funddom}T-\,\bypar{\funddom}T^{\underline{\sigma}T}$ is
bounded by
\[
\sum_{\mathcal{C}}\sum_{\substack{j\in\cbrac{1,\dots,\leng-1}\\
\ii\in\cbrac{1,\dots,d_{j-1}}
}
}O_{\e}\brac{e^{T\brac{1+\dl-\Flat{\sigma_{\ii}}j+\e}}},
\]
where by taking $\dl<\e$ we can conceal $\delta$ within $\epsilon$
and obtain 
\[
=O_{\e}\brac{e^{T\brac{1-\sigma_{\min}+\e}}}.
\]

The proof for the case $j=\leng$ is similar to this case in the proof
of Proposition \ref{prop: prim Z flags correspond to integral matrices}.
\end{proof}

We can now tie the edges to complete the proof of Theorem \ref{thm: general thm}.

\begin{proof}[Proof of Theorem \ref{thm: general thm}]
Let $0<\e<\errexp_{n}$, $0<\delta<\errexp_{n}-\e$ and $\ind$ as
in (\ref{eq: choose height}). Suppose first that $\pairset\subseteq\flagspace{\dvec}$
is not bounded. Recall that $\lm_{n}=\frac{n^{2}}{2\left(n^{2}-1\right)}$
and let $\underline{\sigma}=\brac{\frac{\dl\lm_{n}\ind-\e}{n-\leng}}\cdot\Onevec_{n-\leng}$.
Note that the sum of the coordinates of $\underline{\sigma}$ is $\dl\lm_{n}\ind-\e$,
which, for $T$ large enough, is smaller than $\delta\lm_{n}\ind+O(1/T)$
(aiming to satisfy the condition in Proposition \ref{thm: Counting with S(T) and W(T)}).
By Propositions \ref{cor: very few SL(n,Z) points up the cusp} (For
$H=H_{\infty}$), \ref{prop:Florian very few SL(n,Z) points up the cusp}
(for $H=H_{\ac}$) and \ref{thm: Counting with S(T) and W(T)}, we
have
\[
\#\brac{\sl n\left(\ZZ\right)\cap\byparby{\funddom}T\brac{\pairset}}=\#\brac{\sl n\left(\ZZ\right)\cap\trunc{\byparby{\funddom}T}{\underline{\sigma}T}\brac{\pairset}}+O_{\e}\brac{e^{\ind T\left(1-\frac{\dl\lm_{n}}{n-\leng}+\e\right)}}
\]
\begin{align*}
 & =\frac{\mu\brac{\byparby{\funddom}T\brac{\pairset}}}{\mu\brac{\sl n\brac{\RR}/\sl n\brac{\ZZ}}}+O_{\pairset,\e}\brac{e^{\ind T\left(1-\errexp_{n}+\delta+\e\right)}}+O_{\e}\brac{e^{\ind T\left(1-\frac{\dl\lm_{n}}{n-\leng}+\e\right)}}.
\end{align*}
We choose $\dl$ that will balance the two error terms above, i.e.\
$\dl$ that satisfies: $1-\errexp_{n}+\dl=1-\frac{\dl\lm_{n}}{n-\leng}$.
This $\dl$ is 
\[
\dl=\errexp_{n}\cdot\left(1-\frac{\lm_{n}}{n-\leng+\lm_{n}}\right)=\errexp_{n}\cdot\left(1-\frac{n^{2}}{2\left(n-\leng\right)\left(n^{2}-1\right)+n^{2}}\right).
\]
We conclude that in the case where  $\pairset$ is unbounded, then
\[
\#\brac{\sl n\left(\ZZ\right)\cap\byparby{\funddom}T\brac{\pairset}}=\frac{\mu\brac{\byparby{\funddom}T\brac{\pairset}}}{\mu\brac{\sl n\brac{\RR}/\sl n\brac{\ZZ}}}+O_{\pairset,\e}\brac{e^{\ind T\brac{1-\frac{\errexp_{n}n^{2}}{2\left(n-\leng\right)\left(n^{2}-1\right)+n^{2}}+\e}}}.
\]
By (\ref{eq: choose height}) and Proposition \ref{prop: volumes},
the latter equals 
\[
\frac{\vol_{\flagspace{\dvec}}\brac{\pairset}}{\prod_{j=1}^{\leng-1}\brac{d_{j}+d_{j+1}}}\cdot\frac{e^{\brac{2n-d_{1}-d_{\leng}}T}}{\mu\brac{\sl n\brac{\RR}/\sl n\brac{\ZZ}}}+O_{\pairset,\e}\brac{e^{\brac{2n-d_{1}-d_{\leng}}T\brac{1-\frac{\errexp_{n}n^{2}}{2\left(n-\leng\right)\left(n^{2}-1\right)+n^{2}}+\e}}}
\]
when $H=H_{\infty}$, and 
\[
\frac{\vol_{\flagspace{\dvec}}\brac{\pairset}}{\prod_{j=1}^{\leng-1}\brac{d_{j}+d_{j+1}}}\cdot e^{T}\sum_{i=0}^{\leng-2}(-1)^{\leng-2-i}\frac{T^{i}}{i!}+O_{\pairset,\e}\brac{e^{T\brac{1-\frac{\errexp_{n}n^{2}}{2\left(n-\leng\right)\left(n^{2}-1\right)+n^{2}}+\e}}}
\]
when $H=H_{\ac}$. When $\pairset$ is bounded, we use Proposition
\ref{thm: Counting with S and W}, and obtain of course the same main
term, but with an error term of $O_{\pairset,\e}\brac{e^{\ind T\left(1-\errexp+\e\right)}}$.

As for the leading constant, we recall that $\mu\brac{\sl n\brac{\RR}/\sl n\brac{\ZZ}}$
is the $\mu$-volume of a fundamental domain for $\sl n\brac{\ZZ}$,
which is given in (\ref{eq: Garrett}). All in all,
\[
\frac{\vol_{\flagspace{\dvec}}\brac{\pairset}}{\prod_{j=1}^{\leng-1}\brac{d_{j}+d_{j+1}}}\cdot\frac{1}{\prod_{i=2}^{n}\zeta\left(i\right)}=\frac{\vol_{\flagspace{\dvec}}\brac{\pairset}}{\prod_{j=1}^{\leng-1}\brac{d_{j}+d_{j+1}}}\cdot\frac{1}{\prod_{i=2}^{n}\zeta\left(i\right)}\cdot\frac{\mass{\vol_{\flagspace{\dvec}}}}{\mass{\vol_{\flagspace{\dvec}}}}=
\]
\[
=\frac{\vol_{\flagspace{\dvec}}^{1}\brac{\pairset}}{\prod_{j=1}^{\leng-1}\brac{d_{j}+d_{j+1}}}\frac{\mass{\vol_{\flagspace{\dvec}}}}{\prod_{i=2}^{n}\zeta\left(i\right)}=2^{\leng-1}\cdot c_{\dvec,n}\vol_{\flagspace{\dvec}}^{1}\brac{\pairset}.
\]
This completes the proof.
\end{proof}
\bibliographystyle{alpha}
\phantomsection\addcontentsline{toc}{section}{\refname}\bibliography{bib_for_flags}

\end{document}